\def\sA{{\mathfrak A}}      
\def\sD{{\mathfrak D}}      
   \def\sH{{\mathfrak H}}   
      \def\sL{{\mathfrak L}}
\def\sM{{\mathfrak M}}   \def\sN{{\mathfrak N}}   
\def\sS{{\mathfrak S}}
      \def\dC{{\mathbb C}}
   \def\dN{{\mathbb N}}   
      \def\dR{{\mathbb R}}
   \def\dT{{\mathbb T}}
\def\cA{{\mathcal A}}      
   \def\cE{{\mathcal E}}   
   \def\cH{{\mathcal H}}   
   \def\cK{{\mathcal K}}   \def\cL{{\mathcal L}}
   \def\cN{{\mathcal N}}   
      \def\cR{{\mathcal R}}
\def\cS{{\mathcal S}}   \def\cT{{\mathcal T}}   
      \def\cX{{\mathcal X}}
   \def\bB{{\mathbf B}}   
\def\bD{{\mathbf D}}      
   \def\bH{{\mathbf H}}
\def\dim{{\rm dim\,}}
\def\ran{{\rm ran\,}}
\def\cran{{\rm \overline{ran}\,}}
\def\dom{{\rm dom\,}}
\def\cdom{{\rm \overline{dom}\,}}
\def\cspan{{\rm \overline{span}\, }}
\def\uphar{{\upharpoonright\,}}
\def\f{\varphi}
\def\half{{\frac{1}{2}}}
\newtheorem{theorem}{Theorem}[section]
\newtheorem{lemma}[theorem]{Lemma}
\newtheorem{proposition}[theorem]{Proposition}
\newtheorem{corollary}[theorem]{Corollary}
\newtheorem{definition}[theorem]{Definition}
\newtheorem{remark}[theorem]{Remark}
\numberwithin{equation}{section}
\def\RE{{\rm Re\,}}
\def\IM{{\rm Im\,}}
\def\wt{\widetilde}
\def\wh{\widehat}
\begin{document}
\title{Squares of symmetric operators}
\author[Yury Arlinski\u{\i}]{Yu.M. Arlinski\u{\i}}
\address {Volodymyr Dahl East Ukrainian National University, Kyiv,
 Ukraine}

\email{yury.arlinskii@gmail.com}

\subjclass[2020]{Primary 47B25, 47B44 ; Secondary 47A20}

\keywords{symmetric operator, deficiency subspace, maximal dissipative operator, the Shtraus extension, lifting, compression}
\begin{abstract}

Using the approach proposed in \cite{ArlCAOT2023},  
in an infinite-dimensional separable complex Hilbert space we give  abstract constructions  of  families $\{{\mathcal T}_z\}_{{\rm Im\,} z>0}$ of closed densely defined symmetric operators with the properties: (I) the domain of ${\mathcal T}_z^2$ is a core of ${\mathcal T}_z$, (II) the domain of ${\mathcal T}_z^2$ is dense but note a core of ${\mathcal T}_z$, (III) the domain of ${\mathcal T}_z^2$ is nontrivial but non-dense. For this purpose a class of maximal dissipative operators is defined and studied.
The case ${\rm dom\,} {\mathcal T}_z^2=\{0\}$ has been considered in \cite{ArlCAOT2023}.

Given a densely defined closed symmetric operator $S$, in terms of the intersection of the domain of $S$ with $\ran (S-\lambda I)$ and the projection of the domain of the adjoint $S^*$ on ${\rm ran\,} (S-\lambda I)$, $\lambda\in{\mathbb C}\setminus{\mathbb R}$, necessary and sufficient conditions for the cases (I)--(III) related to the domain of $S^2$, are obtained.

\end{abstract}
\maketitle
\thispagestyle{empty}
\section{Introduction}
Let $S$ be a densely defined closed operator in the Hilbert space $\cH$. Then the following cases  can hypothetically 
occur with the square $S^2$:
\begin{enumerate}
\item  [\rm (I)] the domain of $S^2$ is a core of $S$;
\item [\rm (II)] the domain of $S^2$ is dense in $\cH$ but note a core of $S$;
\item  [\rm (III)] the domain of $ S^2$ is nontrivial but non-dense in $\cH$;
\item  [\rm (IV)] the domain of $S^2$ is trivial;
\item [\rm (V)] $S^2f=0$ for all $f\in\dom S^2$.
\end{enumerate}

The following assertions are useful and give some criteria for the cases (I) and (IV).
\begin{lemma} \label{dbltnm}
(1) Let $S$ be a linear operator. Then for any complex number $\lambda$ the relation 
\[
\dom S\cap\ran (S-\lambda I)=(S-\lambda I)\dom S^2
\]
holds and the following statements are equivalent:
\begin{enumerate}
\def\labelenumi{\rm (\roman{enumi})}
\item $\dom S^2=\{0\}$,
\item  $\dom S\cap\ran (S-\lambda I)=\{0\}$.

\end{enumerate}

(2) Let the operator $S$ be closed and let $\lambda$ be a point of regular type of $S$ \footnote{A complex number $\lambda$ is called a point of regular type for a linear operator $S$ in a Banach space $\cX$ if there exists a positive number $c$ such that $||(S-\lambda I)f||_\cX\ge c||f||_\cX$ for all $f\in\dom S$ \cite{AG}.}. Then
the following statements are equivalent:
\begin{enumerate}
\def\labelenumi{\rm (\roman{enumi})}
\item $\dom S^2$ is a core of the operator $S$,
\item the linear manifold $\dom S\cap\ran (S-\lambda I)$ is dense in $\ran(S-\lambda I)$.

\end{enumerate}
\end{lemma}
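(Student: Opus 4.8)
The plan is to prove both parts by first establishing the identity $\dom S\cap\ran(S-\lambda I)=(S-\lambda I)\dom S^2$, and then reading off the two equivalences from it. For the identity: if $g=(S-\lambda I)f$ with $f\in\dom S^2$, then $Sf\in\dom S$, hence $g=Sf-\lambda f\in\dom S$ (since $f\in\dom S^2\subseteq\dom S$), so $g\in\dom S\cap\ran(S-\lambda I)$. Conversely, if $g\in\dom S\cap\ran(S-\lambda I)$, write $g=(S-\lambda I)f$ with $f\in\dom S$; then $Sf=g+\lambda f\in\dom S$, which is exactly the statement $f\in\dom S^2$, so $g\in(S-\lambda I)\dom S^2$. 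This is the routine part.

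For part (1), the equivalence of (i) and (ii) is then almost immediate: the map $S-\lambda I$ restricted to $\dom S^2$ has kernel contained in $\Ker(S-\lambda I)\cap\dom S^2$; if $\dom S^2=\{0\}$ then trivially $(S-\lambda I)\dom S^2=\{0\}$, and by the identity $\dom S\cap\ran(S-\lambda I)=\{0\}$. Conversely, if $\dom S\cap\ran(S-\lambda I)=\{0\}$, then $(S-\lambda I)f=0$ for every $f\in\dom S^2$; I would need to rule out a nontrivial kernel, i.e.\ argue that $f\in\dom S^2$ with $(S-\lambda I)f=0$ forces $f=0$. Here one uses that $Sf=\lambda f$ implies $f\in\dom S^n$ for all $n$ with $S^nf=\lambda^n f$; iterating and using $(S-\lambda I)\dom S^2=\{0\}$ again gives $S^2 f=\lambda^2 f$ but also $(S-\lambda I)(Sf)=0$... actually the cleanest route: $f\in\dom S^2$ and $(S-\lambda I)f=0$ means $f=(S-\lambda I)^{-1}0$-type reasoning is vacuous, so instead note $Sf=\lambda f\in\dom S$, and then $f\in\dom S^2$ with $(S-\lambda I)^2 f=0$; since $\dom S^2\ni f$ and $(S-\lambda I)f\in\dom S^2$ iff $(S-\lambda I)f\in\dom S$... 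I'll simply observe that $f\in\bigcap_n\dom S^n$ with $S$ acting as $\lambda$, so $f\in\dom S^2$ and the restriction of $S-\lambda I$ to $\dom S^2$ is injective on the complement of eigenvectors; combining with $(S-\lambda I)\dom S^2=\{0\}$ we get $\dom S^2\subseteq\Ker(S-\lambda I)$, and then applying $S$ once more shows $\dom S^2$ is $S$-invariant with $S=\lambda$ there, whence $\dom S^2=\dom S^2\cap\Ker(S-\lambda I)^2 = $ eigenspace; but for $\lambda\notin$ spectrum-related obstructions this need not vanish in general — so in fact the statement as given presumably only asserts the equivalence at the level where one checks both directions via the displayed identity, and the kernel issue dissolves because $\dom S^2=\{0\}\Leftrightarrow(S-\lambda I)\dom S^2=\{0\}$ uses that $S-\lambda I$ is injective on $\dom S$ is NOT assumed in part (1). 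Let me restate: the honest argument for (ii)$\Rightarrow$(i) is that $(S-\lambda I)\dom S^2=\{0\}$ means $\dom S^2\subseteq\Ker(S-\lambda I)$; but then for $f\in\dom S^2$ we have $Sf=\lambda f$, and since this holds for every $f\in\dom S^2$ and $\dom S^2\subseteq\dom S$, applying the hypothesis to $\lambda$ replaced by any other complex number $\mu$ (the identity holds for all $\mu$!) gives $\dom S\cap\ran(S-\mu I)=(S-\mu I)\dom S^2=(\lambda-\mu)\dom S^2$, and if this must be $\{0\}$ for the argument to proceed we'd need it for all $\mu$ — which it isn't stated to be. The resolution: for $f\in\dom S^2$, $f\in\Ker(S-\lambda I)$ for the given $\lambda$; but then also consider $(S-\mu I)f=(\lambda-\mu)f$ which lies in $\dom S$, so $f\in\dom S\cap\ran(S-\mu I)$ for every $\mu\ne\lambda$, hence $f\in(S-\mu I)\dom S^2$; since $f\in\dom S^2$ is itself an eigenvector we can bootstrap to $f\in\dom S^k$ for all $k$, and then $f\in\dom S^2$ with arbitrarily high smoothness doesn't directly kill it. I think the intended reading is simply: (i)$\Leftrightarrow$(ii) via the identity together with the elementary fact that a subspace $\cM$ satisfies $(S-\lambda I)\cM=\{0\}$ and $\cM\subseteq\dom S^2$ forces $\cM=\{0\}$ whenever $S$ has no eigenvalue $\lambda$; and in the generic/interesting case one picks $\lambda\in\dC\setminus\dR$ with $S$ symmetric so that $\Ker(S-\lambda I)=\{0\}$ automatically. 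I would therefore phrase part (1)(ii)$\Rightarrow$(i) as: the identity gives $(S-\lambda I)\dom S^2=\{0\}$, i.e.\ $\dom S^2\subseteq\Ker(S-\lambda I)$; choosing instead a point where $S-\lambda I$ is injective (always possible after noting the identity is $\lambda$-independent in content), $\dom S^2=\{0\}$.

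For part (2), assume $S$ closed and $\lambda$ of regular type, so $\ran(S-\lambda I)$ is a closed subspace and $(S-\lambda I)$ is a bounded-below bijection onto $\ran(S-\lambda I)$ with bounded inverse $R:\ran(S-\lambda I)\to\dom S$. To show (i)$\Rightarrow$(ii): if $\dom S^2$ is a core of $S$, take any $g\in\ran(S-\lambda I)$, write $g=(S-\lambda I)f$, $f\in\dom S$; by the core property pick $f_n\in\dom S^2$ with $f_n\to f$ and $Sf_n\to Sf$ in $\cH$; then $(S-\lambda I)f_n\to(S-\lambda I)f=g$, and $(S-\lambda I)f_n\in\dom S\cap\ran(S-\lambda I)$ by the identity, so $g$ is a limit of elements of $\dom S\cap\ran(S-\lambda I)$, giving density in $\ran(S-\lambda I)$. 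For (ii)$\Rightarrow$(i): suppose $\dom S\cap\ran(S-\lambda I)$ is dense in $\ran(S-\lambda I)$; given $f\in\dom S$, set $g=(S-\lambda I)f\in\ran(S-\lambda I)$ and choose $g_n\in\dom S\cap\ran(S-\lambda I)$, $g_n\to g$; put $f_n=Rg_n\in\dom S^2$ (using the identity $\dom S\cap\ran(S-\lambda I)=(S-\lambda I)\dom S^2$ so that $R$ maps this set into $\dom S^2$). Since $R$ is bounded, $f_n\to Rg=f$; and $Sf_n=(S-\lambda I)f_n+\lambda f_n=g_n+\lambda f_n\to g+\lambda f=Sf$. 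Hence $f_n\to f$ in the graph norm, so $\dom S^2$ is a core. The one point requiring care is that $R$ is genuinely bounded and everywhere defined on the \emph{closed} subspace $\ran(S-\lambda I)$ — this is exactly where closedness of $S$ plus the regular-type hypothesis is used, via the standard fact that for closed $S$ a point of regular type has closed range. I expect this — verifying closedness of $\ran(S-\lambda I)$ and boundedness of $R$ — to be the only nontrivial ingredient; everything else is the graph-norm bookkeeping above.
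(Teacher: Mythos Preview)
Your proof of the identity $\dom S\cap\ran(S-\lambda I)=(S-\lambda I)\dom S^2$ and of Part~(2) is correct and is exactly the standard argument; the paper itself states this lemma without proof, so there is nothing further to compare against.

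Your unease about Part~(1), direction (ii)$\Rightarrow$(i), is justified, but your discussion is tangled and the ``fix'' you settle on is wrong. From the identity, (ii) gives $(S-\lambda I)\dom S^2=\{0\}$, i.e.\ $\dom S^2\subseteq\ker(S-\lambda I)$, and without the additional hypothesis $\ker(S-\lambda I)=\{0\}$ one genuinely cannot conclude $\dom S^2=\{0\}$: take $S=\lambda I$ on any nonzero linear manifold $D$; then $\dom S^2=D\ne\{0\}$ while $\ran(S-\lambda I)=\{0\}$, so (ii) holds and (i) fails. Your proposed escape---``choose instead a point where $S-\lambda I$ is injective''---does not work, because hypothesis~(ii) is stated for the given $\lambda$ only and tells you nothing about other points; the phrase ``the identity is $\lambda$-independent in content'' conflates the identity (which indeed holds for every $\lambda$) with condition~(ii) (which does not).

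The clean resolution is simply to record the missing hypothesis $\ker(S-\lambda I)=\{0\}$, under which (ii)$\Rightarrow$(i) is a one-line consequence of the identity. In every use of this lemma in the paper the operator $S$ is symmetric and $\lambda\in\dC\setminus\dR$ (or $\lambda$ is of regular type), so this hypothesis is automatically satisfied; the lemma as printed is slightly overstated. Replace your exploratory paragraph with that one observation and the proof is complete.
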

If $\lambda$ is a point of regular type of a closed operator $S$, then $\ran (S-\lambda I)$ is a subspace (closed linear manifold) and by the bounded inverse theorem the operator $S-\lambda I$ isomorphically maps $\dom S$, equipped by the graph norm, onto $\ran (S-\lambda I)$.  It follows that if the subspace $(\ran (S-\lambda I))^\perp$ is finite-dimensional, then  the intersection $\dom S\cap\ran (S-\lambda I)$ is dense in $\ran (S-\lambda I)$ (see e.g. \cite[Chapter 1, Lemma 3] {Glazman}) and therefore $\dom S^2$ is a core of the operator $S.$

In this paper we are interested in squares of closed densely defined symmetric operators.
The case (V) for such operators holds when $\dom S^2=\ker S$. Concerning the case (IV), it is well known that
in an infinite-dimensional separable complex Hilbert space there are exist closed densely defined symmetric
operators $S$ with $\dom S^2=\{0\}$.
The first example belongs to Na\u{\i}mark \cite{Naimark1, Naimark2}.
An example of a closed densely defined semi-bounded symmetric operator $S$ in the Hilbert space $L^2(\dT)$ ($\dT$ is the unit circle) with $\dom S^2=\{0\}$ was constructed by Chernoff in \cite{Chern}. 
It is proved by Schm\"{u}dgen in \cite[Theorem~5.2]{schmud} that
for every unbounded selfadjoint operator $A$ in  infinite dimensional separable complex Hilbert space $\cH$ there exist
closed densely defined symmetric restrictions $A_1$ and $A_2$ of $A$ such that
\[
\dom A_1\cap\dom A_2=\{0\} \quad \mbox{and} \quad \dom A^2_1=\dom A^2_2=\{0\}.
\]
Brasche and Neidhardt \cite{BraNeidh} showed that the same result remains true for an arbitrary closed symmetric densely defined but non-selfadjoint operator $A$. Further results related to the existence of  closed densely defined symmetric restrictions $\wt A$ of a selfadjoint operator $A$ with the property $\dom (A\wt A)=\{0\}$ (this yields, in particular, $\dom \wt A^2=\{0\}$)
can be found in \cite{ArlKov2013} and \cite{Arl_ZAg_IEOT_2015} (see also \cite[Introduction]{ArlCAOT2023}).

Given a selfadjoint operator $A$, in \cite{schmud} have been established results related, in particular, to the existence of a closed densely defined symmetric restriction $S$ of $A$ such that $\dom S^2$ is dense and is a core/is not a core for $S$ (\cite[Theorem 4.5]{schmud}).

In our recent paper \cite{ArlCAOT2023} a new construction of families of densely defined closed symmetric operators with trivial domains of their squares has been proposed.
\textit{In the current article we use the approach of \cite{ArlCAOT2023} for constructions of families of densely defined closed symmetric operators whose squares have the properties described in items (I), (II), (III) above}. In this way maximal dissipative operators of special type play a key role.

Recall that a linear operator $T$ in a Hilbert space $\cH$ is called \textit{dissipative} if $\IM (Tf,f)\ge 0$ $\forall f\in\dom T$ and \textit{maximal dissipative} if it is dissipative and has no dissipative extensions without exit from $\cH$. It is well known that  (see e.g. \cite{Ka,Kuzhel, Straus1968})
\begin{itemize}
\item $T$ is maximal dissipative $\Longleftrightarrow$ $-T^*$ is maximal dissipative ($T^*$ is maximal accumulative);
\item the set of all regular points of maximal dissipative operator contains the open lower half-plane and 
if $T$ is maximal dissipative operator, then the resolvent admits the estimate
$
||(T-\lambda I)^{-1}||\le(|\IM \lambda|)^{-1}\;\; \forall\lambda\in\dC_-$ ($\IM \lambda<0$).
\end{itemize}
In this paper we continue the study, begun in \cite{ArlCAOT2023}, of the class of unbounded
maximal dissipative operators $T$ whose corresponding nonnegative quadratic forms
\begin{equation}\label{cbyuah1}
\gamma_T[f]:=\IM (Tf,f),\; \dom\gamma=\dom T
\end{equation}
are \textit{singular}.
 The singularity means (see \cite{Kosh, KoshDud}) that
\[
\forall f\in\dom T\;\;\exists \{f_n\}\subset\dom T: \lim\limits_{n\to\infty}f_n=f\quad\mbox{and}\quad  \lim\limits_{n\to\infty}\IM\left(Tf_n,f_n\right)=0.
\]
The class of such maximal dissipative operators we denote by $ \bD_{\rm{sing}}$.
In particular, if the linear manifold (the \textit{Hermitian domain} of $T$ \cite{Kuzhel})
\begin{equation}\label{kerna}
\sS_T:=\ker\gamma_T=\{f\in\dom T: \IM (Tf,f)=0\}
\end{equation}
is dense in $\cH$, then $T\in\bD_{\rm{sing}}$.
By \cite[Proposition 3.2]{ArlCAOT2023} the relation
\[
\sS_T=\sS_{T^*}=\{f\in\dom T\cap\dom T^*: Tf=T^*f\}=\ker(\IM T)
\]
holds.
Moreover, if $T\in\bD_{\rm{sing}}$, then $\sS_T=\dom T\cap\dom T^*$, see Theorem \ref{xfcnyck} and \cite[Lemma 3.3]{ArlCAOT2023}.
The symmetric operator $S:=T\uphar\sS_T$  is called the \textit{Hermitian part} of $T$ \cite{Kuzhel}.
If $\sS_T\ne\{0\}$ and is non-dense, then in the case when $S$ has least one finite deficiency index, we show that $T\notin  \bD_{\rm{sing}}.$

We give various equivalent conditions for a maximal dissipative operator to belong to the class $\bD_{\rm{sing}}$, see Proposition \ref{zghbl}.
For any closed symmetric operator $S$ with non-dense domain and with infinite deficiency indices we construct in Section \ref{nov3b} maximal dissipative operators of the class $\bD_{\rm{sing}}$ having Hermitian part $S$. Moreover, the corresponding abstract examples are constructed by means of the von Neumann result
\cite{Neumann1929} and of the Kre\u{\i}n \textit{shorted operator} \cite{Kr}, \cite{And, AT} (see Section \ref{nov3b}).

In order to construct closed densely defined symmetric operators with the properties (I) -- (IV) mentioned above we use the following scheme proposed in \cite{ArlCAOT2023} (see Section \ref{constrone}). Let $T$ be an unbounded maximal dissipative operator in the Hilbert space $\cH$ and let $\{\cE,\Gamma\}$ be its boundary pair connected with $T$ by the Green identity (see Subsection \ref{nov17a}), then define for each $z\in \dC_+$ (i.e. $\IM z>0$) the operator
\[
\wt \cT_z=\begin{bmatrix} T&0\cr 2i\sqrt{\IM z}\,\Gamma& zI_\cE\end{bmatrix}
,\;\;\dom \wt \cT_z=\dom T\oplus\cE,
\]
which is a maximal dissipative lifting of $T$ in the Hilbert space $\sH:=\cH\oplus\cE$. The Hermitian part $\cT_z$ of $\wt\cT_z$ is of the form
\[
\begin{array}{l}
\sS_{\wt\cT_z}=\dom \cT_z=\left\{\begin{bmatrix}f\cr-\cfrac{\Gamma f}{\sqrt{\IM z}}\,\end{bmatrix}: f\in\dom T\right\},\;
\cT_z\begin{bmatrix}f\cr-\cfrac{\Gamma f}{\sqrt{\IM z}}\end{bmatrix}=\begin{bmatrix}Tf\cr-\cfrac{\bar z\, \Gamma f}{\sqrt{\IM z}}\end{bmatrix},\, f\in\dom T.
\end{array}
\]
The symmetric operator $\cT_z$ is closed and when, in addition, the operator $T$ is a maximal accretive/maximal sectorial with vertex at the origin and the semi-angle $\alpha$, then for $\arg\in (0,\half\pi]$/$\arg z\in (0, \pi-\alpha)$ the operator $\cT_z$ is non-negative, see Theorem \ref{zghbl} and Proposition \ref{ahblajhv}.
Besides  for distinct $z_1, z_2\in\dC_+$ holds the equality
\[
\cT_{z_2}=\begin{bmatrix}I_\cH&0\cr 0&\sqrt{\cfrac{\IM z_2}{\IM z_1}}\,I_{\cE}\end{bmatrix}\left(\cT_{z_1}+\begin{bmatrix}0&0\cr 0&\cfrac{\IM(\bar z_2 z_1)}{\IM z_2}\,I_{\cE}\end{bmatrix}\right)\begin{bmatrix}I_\cH&0\cr 0&\sqrt{\cfrac{\IM z_2}{\IM z_1}}\,I_{\cE}\end{bmatrix}.
\]
\textit{The operator  $\cT_z$ is densely defined if and only if the quadratic form $\gamma_T$, given by \eqref{cbyuah1}, is singular}, see \cite[Theorem 4.2]{ArlCAOT2023} and Theorem \ref{cnzcbv}, i.e., $T\in\bD_{\rm sing}$. Moreover, if this is a case, then we proved in \cite [Theorem 4.2]{ArlCAOT2023} \\
\[
\dom\cT_z^2=\{0\}\quad\mbox{if and only if}\quad\dom T\cap\dom T^*=\{0\}.
\]
 Since $\ran (\cT_z-\bar z I)=\cH$ and $\cH\cap\dom\cT_z=\sS_T$, we apply Lemma \ref{dbltnm}.
in order to establish 
the equivalent conditions related to the properties of $\dom \cT_z^2$. In particular, we prove in Theorem \ref{cnzcbv} and Theorem \ref{ctyn29} that

({\bf a}) $\dom \cT_z^2$ is a core of $\cT_z$ $\Longleftrightarrow$ $\dom T\cap\dom T^*$ is dense in $\cH$,

({\bf b}) $\dom \cT_z^2$ is densely defined but not a core of $\cT_z$ $\Longleftrightarrow$
$$\left\{\begin{array}{l}\dom T\cap\dom T^*\ne \{0\},\; (\dom T\cap\dom T^*)^\perp\ne \{0\}\\
(\dom T\cap\dom T^*)^\perp\cap\ran\left(\IM \left(T^*-i I\right)^{-1}\right)^\half=\{0\}\end{array}\right.,$$

({\bf c}) $\dom \cT_z^2\ne \{0\}$ and is non-densely defined $\Longleftrightarrow$
$$\left\{\begin{array}{l}\dom T\cap\dom T^*\ne \{0\},\; (\dom T\cap\dom T^*)^\perp\ne \{0\}\\
(\dom T\cap\dom T^*)^\perp\cap\ran\left(\IM \left(T^*-i I\right)^{-1}\right)^\half\ne\{0\}\end{array}\right..$$

Moreover, we construct abstract example of maximal dissipative operators (Section \ref{nov3b}) and maximal sectorial and dissipative operators (Section \ref{nov28d}) of the class $\bD_{\rm sing}$, satisfying conditions ({\bf a}), ({\bf b}), ({\bf c}).


Let $S$ be  a closed symmetric operator. Set
$$\sM_z:=\ran (S-zI),\;\sN_z:=\sM_{\bar z}^\perp.$$
In Section \ref{nov3c} we consider the \textit{Shtraus extensions} \cite{Straus1968}:
\begin{equation}\label{slamb}
\dom \wt S_z=\dom S+\sN_z,\; \wt S_z(f_S+\f_z)=Sf_S+z\f_z,\; f_S\in\dom S,\;\f_z\in\sN_z
\end{equation}
of $S$. Here $z\in\dC$ is a point of regular type for $S$.
By means of the compression of $\wt S_z$ on the subspace $\sM_{\bar z}$
we prove  that any closed symmetric operator $S$ for each $z\in\dC_+$ takes the form $\cT_z$ and the Shtraus extension $\wt S_z$ of $S$ is of the form $\wt\cT_z$.  Using results described in items ({\bf b}) and ({\bf c}), this gives the possibility to get criteria in the spirit of Lemma \ref{dbltnm} 
for the cases (II) and (III) in terms of trivial or non-trivial intersections
 $$P_{\sM_{\lambda}}S^*\cap( \sM_{\lambda}\ominus(\sM_{\lambda}\cap\dom S)) ,$$
  where $\lambda\in\dC\setminus\dR$, 
see Theorem \ref{jrn29a}. Finally, similar results are established for "clones" of $S$ that have been defined and studied in \cite{CAOT2021} and \cite{ArlCAOT2023}.

Observe that the square $S^2$ of a symmetric operator is a nonnegative operator and $(S^2f,f)=||Sf||^2$ $\forall f\in\dom S^2$.
For a densely defined square $S^2$ the representations of the Friedrichs extension $(S^2)_{\rm F}$ have been considered in \cite{ArlKov2011}, \cite{ArlKov2013}, \cite{GestSchm}, \cite{RS}.  In particular, the following are equivalent (see e.g. \cite{ArlKov2011}):
\begin{enumerate}
\def\labelenumi{\rm (\roman{enumi})}
\item $\dom S^2$ is a core of $S$;
\item $(S^2)_{\rm F}=S^*S.$
\end{enumerate}
In \cite{GestSchm} it is established that
$
(S^2)_{\rm F}=\left({S\uphar\dom S^2}\right)^*(S\uphar\dom S^2)^{**}.
$ 
Besides in \cite[Proposition 3.2]{GestSchm} a description of $\dom S^2$ and a criteria for $\dom S^2$ to be a core for $S$ are established in terms of the Cayley transform
$V=(\wt S_{i}-iI)(\wt S_{i}+i I)^{-1}$ of the Shtraus extension $\wt S_i $ of $S$. 
\vskip 0.3cm

\textit{Notations}

We use the following notations. The Banach space of all bounded operators acting between Hilbert spaces $H_1$ and $H_2$ is denoted by $\bB(\cH_1,\cH_2)$ and $\bB(H):=\bB(H,H)$.
The symbols $\dom T$, $\ran T$, $\ker T$ denote
the domain, range and kernel of a linear operator $T$, respectively,
and $\cran T$ denotes the closure of the range of $T$.
The spectrum and the resolvent set of a linear operator $T$ are denoted by $\sigma(T)$ and~$\rho(T)$, respectively.
If $\sL$ is a subspace, i.e.,  a closed linear manifold of a Hilbert space, the orthogonal
projection onto $\sL$ is denoted by $P_\sL.$ The identity operator in a Hilbert space $H$ is denoted by $I_H$ and sometimes by $I$.
By $\cL^\perp$ we denote the orthogonal complement to the linear manifold $\cL$.
The notation $T\uphar \cN$ means the restriction of a linear operator $T$ to the linear manifold $\cN\subset\dom T$.
The open upper/lower half-plane of the complex plane $\dC$ are denoted by $\dC_\pm \!:=\! \{ z\!\in\!\dC: \IM z \!\gtrless\! 0\}$ and
$\dR_+:=(0,+\infty).$

\section{Symmetric operators}


Recall that a linear operator $S$ in a Hilbert space $\cH$ is called symmetric (or Hermitian) if $\IM (Sf,f)=0$ for all $f\in\dom S$. If $\dom S$ is dense, then $S$ is symmetric iff
$S\subseteq S^*$.
 Let $S$ be a closed symmetric operator in $\cH$ and denote $\wh\rho(S)$ the set of all points of regular type of $S$. Then $\wh\rho(S)\supseteq\dC\setminus\dR$. 
$\sM_\lambda$ and $\sN_\lambda$ are subspaces for each $\lambda\in\wh\rho(S)$ and $\sN_\lambda$ is called the deficiency subspace of $S$.
 The numbers
\[
\begin{array}{l}
n_+=\dim \sN_\lambda,\; \lambda\in\dC_+,\;n_-=\dim \sN_\lambda,\;\lambda\in\dC_-
\end{array}
\]
are called the deficiency indices (defect numbers) of $S$ \cite{AG,krasno}.

If $S$ is densely defined, then due to J.~von Neumann results \cite{AG}
\begin{enumerate}
\item
the domain of the adjoint operator $S^*$  admits the direct decomposition
\[
\dom S^*=\dom S\dot+\sN_\lambda\dot+\sN_{\bar\lambda},\;\lambda \in\dC\setminus\dR;
\]
\item the operator $S$ admits selfadjoint extensions in $\cH$ if and only if the deficiency indices of $S$ are equal.
\end{enumerate}
If $\cdom S\ne \cH$, then $\dom S\cap\sN_\lambda=\{0\}$, but $\dom S, $ $\sN_\lambda$, $\sN_{\bar\lambda}$ are not linearly independent \cite{krasno, Naimark3},
the adjoint of $S$ is the linear relation (multi-valued operator). Set
\[
\cH_0:=\cdom S,\;\sL:=\cH\ominus\cH_0,\; S_0:=P_{\cH_0}S.
\]
It is proved in \cite[Lemma 2]{krasno} that
$
\sL\cap\sM_{\lambda}=\{0\}\;\;\forall \lambda\in\dC\setminus\dR.
$

The operator $S$ we consider as a closed operator acting from $\cH_0$ into $\cH$. Since $\dom S$ is dense in $\cH_0$, there exists the adjoint operator $S^*$ acting from $\cH$ into $\cH_0$, $S^*$ is closed and densely defined in $\cH$.
Clearly, $S^*_0=S^*\uphar(\cH_0\cap\dom S^*)$ and (see \cite{ArlBelTsek2011})
\[
S^*\f_\lambda=P_{\cH_0}\f_\lambda,\; \f_\lambda\in\sN_\lambda.
\]
The domain $\dom S^*$ admits the decomposition $\dom S^*=\dom S+\sN_\lambda+\sN_{\bar\lambda}$ \cite[Theorem 1]{Shmulyan70}.
Moreover, (see \cite[Lemma 2, Theorem 8]{krasno})
\begin{equation}\label{xthdjy}
(S-\bar\lambda I)(S-\lambda I)^{-1}P_{\sM_\lambda}h=P_{\sM_{\bar\lambda}}h\quad\mbox{for all}\quad
h\in\sL
\end{equation}
and
\begin{equation}\label{gthtctx1}
\begin{array}{l}
\left\{\begin{array}{l}f_S+\f_\lambda+\f_{\bar\lambda}=0\\f_S\in\dom S,\;\f_\lambda\in\sN_\lambda,\;\f_{\bar\lambda}\in\sN_{\bar\lambda},\\
 \lambda \in\dC\setminus\dR
\end{array}\right.
\Longleftrightarrow
\left\{\begin{array}{l}\f_\lambda=P_{\sN_\lambda}h,\\
\f_{\bar\lambda}=-P_{\sN_{\bar\lambda}}h,\\
f_S=2i\IM\lambda(S-\lambda I)^{-1} P_{\sM_\lambda}h,\\
h\in\sL\end{array}\right..
\end{array}
\end{equation}
 Therefore
\begin{equation}\label{rhfc11}
\sN_{\bar\lambda}\cap(\dom S\dot+\sN_\lambda)=P_{\sN_{\bar\lambda}}\sL.
\end{equation}
Set
\begin{equation}\label{gjlghjcn}
\sL_\lambda:=P_{\sN_\lambda}\sL,\; \sN_\lambda':=\sN_\lambda\ominus\sL_\lambda
\end{equation}
 Then (see \cite{ArlBelTsek2011}) the operator $V_\lambda:\sL_\lambda\mapsto\sL_{\bar\lambda}$ defined by the relation
\begin{equation}\label{forbis}
V_\lambda P_{\sN_\lambda}h=P_{\sN_{\bar \lambda}} h,\; h\in\sL,\; \lambda \in\dC\setminus\dR
\end{equation}
is an isometry.
It follows from \eqref{xthdjy} and \eqref{forbis} that
\[
\begin{array}{l}
(I-V_\lambda)P_{\sN_\lambda}h=P_{\sN_\lambda}h-P_{\sN_{\bar\lambda}}h=P_{\sM_\lambda}h-P_{\sM_{\bar\lambda}}h\\
=P_{\sM_\lambda}h-(S-\bar\lambda I)(S-\lambda I)^{-1}P_{\sM_\lambda}h
=2i\IM\lambda (S-\lambda I)^{-1}P_{\sM_\lambda}h\in\dom S\;\;\forall h\in\sL.
\end{array}
\]
The subspace $\sN_\lambda'$ lies in $\cH_0$ and  is called the \textit{semi-deficiency subspace} of $S$. It is the deficiency subspace of the symmetric operator $S_0$ which is densely defined in $\cH_0$, i.e.,  $\ker (S^*-\lambda I)\cap \cH_0=\sN_\lambda'.$
It is proved by Krasnosel'ski\u{\i} \cite[Corollary to Theorem 2]{krasno} that linear manifolds $\sL_{\lambda}$ for $\lambda\in\dC\setminus\dR$ are either all closed (are subspaces) or none of them is closed. The theorem below was established in \cite[Theorem 3]{Shmulyan70} (see also \cite[Theorem 2.4.1]{ArlBelTsek2011}).
\begin{theorem}\label{ivekmzy}
 The following are equivalent:
\begin{enumerate}
\def\labelenumi{\rm (\roman{enumi})}
\item the operator $S_0$ is closed;
\item the linear manifold $\sL_\lambda$ is closed (i.e., $\sL_\lambda$ is a subspace) for at least one (then for all) $\lambda \in\dC\setminus\dR.$
\end{enumerate}
\end{theorem}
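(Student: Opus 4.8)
The plan is to reduce condition (i) to the closedness of a range, then to reformulate that range condition as a statement about the relative position of the subspaces $\cH_0$, $\sL$, $\sM_\mu$ and $\sN_{\bar\mu}$, using the elementary fact that for closed subspaces $\cM,\cN$ of a Hilbert space the sum $\cM+\cN$ is closed if and only if $P_{\cM^\perp}\cN$ is closed. (Indeed $\cM+\cN=\cM\oplus P_{\cM^\perp}\cN$, an orthogonal algebraic sum, and an orthogonal sum of a closed subspace with a linear manifold is closed precisely when the manifold is, since that manifold equals $(\cM+\cN)\cap\cM^\perp$.)

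First I would observe that $S_0$ is a densely defined symmetric operator in $\cH_0$: for $f,g\in\dom S_0=\dom S\subseteq\cH_0$ one has $(S_0f,g)=(Sf,g)=(f,Sg)=(f,S_0g)$. Hence every $\mu\in\dC\setminus\dR$ is a point of regular type for $S_0$, with $||(S_0-\mu I)f||\ge|\IM\mu|\,||f||$ on $\dom S_0$, so $S_0-\mu I$ has a bounded inverse on its range. By a standard argument this yields that $S_0$ is closed if and only if $\ran(S_0-\mu I)$ is a subspace of $\cH_0$: if $S_0$ is closed then $S_0-\mu I$ is closed and bounded below and so has closed range; conversely, if $\ran(S_0-\mu I)$ is closed then $(S_0-\mu I)^{-1}$ is a bounded operator with closed domain, hence closed, so $S_0-\mu I$, and therefore $S_0$, is closed.

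Next I would identify this range. For $f\in\dom S\subseteq\cH_0$ we have $(S_0-\mu I)f=P_{\cH_0}Sf-\mu P_{\cH_0}f=P_{\cH_0}(S-\mu I)f$, so
\[
\ran(S_0-\mu I)=P_{\cH_0}\sM_\mu .
\]
Since $\mu$ is a point of regular type for the closed symmetric operator $S$, the set $\sM_\mu$ is a subspace; moreover $\cH_0=\sL^\perp$ and $\sM_\mu^\perp=\sN_{\bar\mu}$. Applying the elementary fact twice, once to the ordered pair $\cM=\sL,\ \cN=\sM_\mu$ and once to $\cM=\sM_\mu,\ \cN=\sL$, I obtain
\[
P_{\cH_0}\sM_\mu\ \text{closed}\ \Longleftrightarrow\ \sL+\sM_\mu\ \text{closed}\ \Longleftrightarrow\ P_{\sN_{\bar\mu}}\sL=\sL_{\bar\mu}\ \text{closed}.
\]
Combining with the previous paragraph, $S_0$ is closed if and only if $\sL_{\bar\mu}$ is a subspace. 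Since by the quoted theorem of Krasnosel'ski\u{\i} the manifolds $\sL_\lambda$, $\lambda\in\dC\setminus\dR$, are either all closed or none closed, this condition does not depend on the choice of $\mu$ and is exactly (ii), which proves (i)$\Leftrightarrow$(ii).

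The hard part is really only the careful bookkeeping in the double application of the subspace fact: one must use precisely the pair $\{\sL,\sM_\mu\}$, so that the orthogonal complements appearing are $\sL^\perp=\cH_0$ (matching $\ran(S_0-\mu I)=P_{\cH_0}\sM_\mu$) and $\sM_\mu^\perp=\sN_{\bar\mu}$ (matching $\sL_{\bar\mu}=P_{\sN_{\bar\mu}}\sL$). Beyond that there is no real obstacle; the two substantive ingredients are the standard equivalence between closedness of the symmetric operator $S_0$ and closedness of $\ran(S_0-\mu I)$, and the already-cited ``all or none'' result of Krasnosel'ski\u{\i}.
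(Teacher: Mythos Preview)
Your argument is correct. The key steps --- the identification $\ran(S_0-\mu I)=P_{\cH_0}\sM_\mu$, the two-sided use of the fact that for closed subspaces $\cM,\cN$ the sum $\cM+\cN$ is closed if and only if $P_{\cM^\perp}\cN$ is closed, and the appeal to Krasnosel'ski\u{\i}'s ``all or none'' result --- are all sound and fit together as stated.

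As for comparison: the paper does not supply its own proof of this theorem; it is quoted from \cite[Theorem~3]{Shmulyan70} (with a further reference to \cite[Theorem~2.4.1]{ArlBelTsek2011}). Your short argument via the closedness of $\sL+\sM_\mu$ is a clean self-contained route and is in the spirit of those references.
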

Clearly, if $\dim\sL<\infty$, then $\sL_\lambda$ is a subspace and hence the operator $S_0$ is closed.

In the next proposition we define specific non-densely defined closed symmetric operators, which we will use in Section \ref{nov12a}.
\begin{proposition}\label{ghbv1}
Let $S_0$ be an unbounded closed densely defined symmetric operator in $\cH_0$, let $\sL$ be a Hilbert space and let $L\in\bB(\cH_0, \sL)$.
\begin{enumerate}
\item Assume, in addition,
\begin{equation}\label{zwete1}
\ker L^*=\{0\},\; \ran L^*\cap \dom S^*_0=\{0\}.
\end{equation}
Then the operator
\begin{equation}\label{erst11}
S:=S_0+LS_0,\;\dom S=\dom S_0.
\end{equation}
is closed symmetric in the Hilbert space $\cH:=\cH_0\oplus\sL$ and
\begin{equation}\label{ers12}
\dom S^*\cap\sL=\{0\},
\end{equation}
where $S^*:\cH\to\cH_0$ is the adjoint to the operator $S:\cH_0\to\cH.$

\item If $S_0$ is a selfadjoint operator in $\cH_0$, then the operator
\begin{equation}\label{erst22}
S:=S_0+L, \;\dom S=\dom S_0
\end{equation}
 is closed symmetric in the Hilbert space $\cH:=\cH_0\oplus\sL$ and
\begin{equation}\label{ers33}
\dom S^*=\dom S_0\oplus\sL,\; S^*(f+g)=S_0f+L^*g,\; f\in\dom S_0,\;g\in\sL,
\end{equation}
where $S^*:\cH\to\cH_0$ is the adjoint to the operator $S:\cH_0\to\cH.$
\end{enumerate}
\end{proposition}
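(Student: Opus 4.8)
The plan is to treat the two parts by the same underlying mechanism: the operator $S$ is obtained from $S_0$ by adjoining a "graph-like" piece governed by the bounded operator $L$, and in both cases I will compute $S^*$ explicitly from the defining identity $(Sf,u)_\cH=(f,S^*u)_{\cH_0}$ for $f\in\dom S_0$ and $u=u_0\oplus g\in\cH_0\oplus\sL$. First I would verify symmetry and closedness. For symmetry in part (1): for $f\in\dom S_0$ we have $(Sf,f)_\cH=(S_0f,f)_{\cH_0}+(LS_0f,0)_\sL=(S_0f,f)_{\cH_0}$ (since the $\sL$-component of $f$ is zero, $f$ lying in $\cH_0$), which is real because $S_0$ is symmetric; in part (2) the same computation applies with $LS_0f$ replaced by $Lf$. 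Closedness follows because $\dom S=\dom S_0$ with the graph norm $\|f\|^2+\|S_0f\|^2+\|LS_0f\|^2$ (resp. $+\|Lf\|^2$) is equivalent to the $S_0$-graph norm, $L$ being bounded, so $S$ is closed iff $S_0$ is, which is assumed.

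Next, the computation of $S^*$. For part (1), $u=u_0\oplus g$ lies in $\dom S^*$ iff there is $v\in\cH_0$ with $(S_0f+LS_0f,u_0\oplus g)_\cH=(f,v)_{\cH_0}$ for all $f\in\dom S_0$, i.e. $(S_0f,u_0)_{\cH_0}+(S_0f,L^*g)_{\cH_0}=(f,v)_{\cH_0}$, i.e. $(S_0f,u_0+L^*g)=(f,v)$ for all $f$. This says exactly that $u_0+L^*g\in\dom S_0^*$ and $S_0^*(u_0+L^*g)=v$. Now I invoke hypothesis \eqref{zwete1}: if $u=0\oplus g\in\dom S^*$ (so $u_0=0$) then $L^*g\in\dom S_0^*$, and since $\ran L^*\cap\dom S_0^*=\{0\}$ we get $L^*g=0$; then $\ker L^*=\{0\}$ forces $g=0$. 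This is precisely \eqref{ers12}. (As a by-product one sees $\dom S^*=\{u_0\oplus g: u_0+L^*g\in\dom S_0^*\}$, $S^*(u_0\oplus g)=S_0^*(u_0+L^*g)$.) For part (2), the analogous computation gives $u=u_0\oplus g\in\dom S^*$ iff there is $v$ with $(S_0f,u_0)+(Lf,g)_\sL=(f,v)$ for all $f\in\dom S_0$, i.e. $(S_0f,u_0)+(f,L^*g)_{\cH_0}=(f,v)$; since $S_0=S_0^*$ is selfadjoint, the map $f\mapsto(S_0f,u_0)$ is continuous in $f$ exactly when $u_0\in\dom S_0$, and then $(S_0f,u_0)=(f,S_0u_0)$, so the identity reads $(f,S_0u_0+L^*g)=(f,v)$ for all $f$ in the dense set $\dom S_0$, forcing $u_0\in\dom S_0$ and $v=S_0u_0+L^*g$. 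Conversely any $u_0\in\dom S_0$ and any $g\in\sL$ work. This gives \eqref{ers33} with no constraint on $g$, the point being that in part (2) the "multiplier" $L$ acts on $f$ rather than on $S_0f$, so no intersection condition with $\dom S_0^*$ is needed and all of $\sL$ survives.

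The main obstacle — really the only delicate point — is the continuity/closability argument that extracts membership in $\dom S_0^*$ (part 1) or $\dom S_0$ (part 2) from the defining relation for $\dom S^*$, together with correctly tracking that $S$ is viewed as a \emph{non-densely defined} operator acting from $\cH_0$ into $\cH$, so that $S^*$ is an \emph{operator} (not a relation) acting from $\cH$ into $\cH_0$, densely defined in $\cH$. I would be careful to note that in part (1) $S$ is genuinely non-densely defined (its domain lies in $\cH_0\subsetneq\cH$), so the earlier general theory of non-densely defined symmetric operators recalled in Section~2 applies; the hypothesis $\ker L^*=\{0\}$ is exactly what makes $\dom S$ dense in $\cH_0$ fail to be upgraded — i.e. ensures $\overline{\dom S}=\cH_0\ne\cH$ unless $\sL=\{0\}$. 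Everything else is a routine verification once the adjoint is computed, and \eqref{ers12}, \eqref{ers33} then drop out immediately from \eqref{zwete1} and the selfadjointness of $S_0$, respectively.
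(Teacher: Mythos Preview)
Your proposal is correct and follows essentially the same approach as the paper: compute the adjoint from the defining identity and invoke \eqref{zwete1} (respectively, the selfadjointness of $S_0$) to obtain \eqref{ers12} (respectively, \eqref{ers33}). You are in fact more thorough than the paper, which skips the symmetry check and the explicit graph-norm argument for closedness, and in part~(1) you compute the full adjoint rather than only $\dom S^*\cap\sL$; one small aside --- your closing remark that $\ker L^*=\{0\}$ is ``exactly what makes $\dom S$ dense in $\cH_0$ fail to be upgraded'' is not quite right (non-density in $\cH$ is simply because $\sL\ne\{0\}$; the condition $\ker L^*=\{0\}$ is used only to conclude $g=0$ from $L^*g=0$), but this does not affect the argument.
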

\begin{proof}
 (1) Because $L$ is bounded and $S_0$ is closed, the operator $S$ given by \eqref{erst11} is closed as well.
Let $g\in \sL\cap\dom S^*$. Then for all $f\in\dom S$ we have for some $h\in\cH_0$
\[
(f,h)=(Sf,g)=(S_0f+LS_0f,g)=(S_0f,g)+(S_0f,L^*g)=(S_0f, L^*g).
\]
It follows that $L^*g\in\dom S^*_0$. But \eqref{zwete1} gives $g=0$. So, \eqref{ers12} holds.

 (2) Since $L$ is bounded, the operator defined in \eqref{erst22} is closed. Hence, because $S_0$ is selfadjoint in $\cH_0$, we arrive at \eqref{ers33}.
\end{proof}

\vskip 0.3 cm

\centerline{Simple symmetric operators}
\begin{definition}\label{simplesym}
A closed symmetric operator $S$ is called simple if there is no a non-trivial reducing subspace of $S$ on which $S$ is a selfadjoint.
\end{definition}
Any closed symmetric operator $S$ admits the orthogonal decomposition
\begin{equation}\label{hfpkj}
S=S^{(0)}P_{\cH^{(0)}}\oplus S^{(1)}P_{\cH^{(1)}},
\end{equation}
where $S^{(0)}$ is a simple symmetric operator in ${\cH^{(0)}}$ and $S^{(1)}$ is a selfadjoint operator in ${\cH^{(1)}}$, $\cH={\cH^{(0)}}\oplus{\cH^{(1)}}$,
\[
{\cH^{(0)}}=\cspan\{\sN_\lambda:\lambda\in\dC\setminus\dR\},\; {\cH^{(1)}}=\bigcap\limits_{\lambda\in\dC\setminus\dR}\sM_\lambda.
\]
This statement is established by Kre\u{\i}n in \cite{Krein1949}, see also \cite[Proposition 1.1]{LT}, \cite[Lemma 6.6.4]{ArlBelTsek2011}. 
It follows that
a closed densely defined symmetric operator $S$ in $\cH$ is simple if and only if
\[
\cspan\{\sN_\lambda,\;\IM \lambda\ne 0\}=\cH.
\]

The decomposition \eqref{hfpkj} implies that
$
\sM_\lambda=\sM_\lambda(S^{(0)})\oplus\cH^{(1)},\; \lambda\in \wh\rho(S).
$


\section{Maximal dissipative operators and their Cayley transforms}\label{disoperca}
\subsection{The Cayley transform}

If $T$ is a maximal dissipative operator in $\cH$, then its  Cayley transform
\begin{equation}\label{rtkbnh}
Y_\lambda=(T-\lambda I)(T-\bar \lambda I )^{-1}\Longleftrightarrow \left\{\begin{array}{l} \psi=(T-\bar \lambda I )f\\
Y_\lambda \psi=(T-\lambda I)f\end{array}\right. f\in\dom T,\; \lambda\in\dC_+
\end{equation}
is a contraction \cite{AG}. It follows from \eqref{rtkbnh} that
\begin{equation}\label{thecayl}
I-Y_\lambda =2i\IM \lambda (T-\bar\lambda I)^{-1},\;\ran (I-Y_\lambda)=\dom T,
\end{equation}
\begin{equation}\label{theccayl}
||\psi||^2-||Y_\lambda \psi||^2=4\IM \lambda\,\IM (Tf,f),\; \psi=(T-\bar\lambda I)f,\;f\in\dom T,
\end{equation}
\begin{equation}\label{cghzo}
Y_\lambda^*=(T^*-\bar\lambda I)(T^*-\lambda I)^{-1}\Longleftrightarrow
\left\{\begin{array}{l} \phi=(T^*-\lambda I )g\\
Y_\lambda^* \phi=(T^*-\bar \lambda I)g\end{array}\right., \; g\in\dom T^*.
\end{equation}
From \eqref{rtkbnh} and \eqref{cghzo} one gets
\begin{equation}\label{jhfnyjt}
\left\{\begin{array}{l}f=(I-Y_\lambda)h\\
Tf=(\lambda I-\bar\lambda Y_\lambda)h\end{array}\right.,\;  \left\{\begin{array}{l}g=(I-Y_\lambda^*)\psi\\
T^*g=(\bar \lambda I-\lambda Y_\lambda^*)\psi\end{array}\right.,\; h, \psi\in\cH.
\end{equation}
Hence
\[
(T-\mu I)u=(\lambda-\mu)\left(I-\cfrac{\bar\lambda-\mu}{\lambda-\mu}\, Y_\lambda\right)\left(I-Y_\lambda\right)^{-1}u,\; u\in\dom T.
\]
Besides, from \eqref{rtkbnh}, \eqref{jhfnyjt} and using commutativity of $Y_\lambda$ and $Y_\mu$ one obtains
\begin{equation}\label{yvyl}
Y_\mu=\frac{\bar\lambda-\mu}{\bar\mu-\lambda}\left(Y_\lambda -
\frac{\lambda-\mu}{\bar\lambda-\mu}\, I\right)\left(I-\frac{\bar\lambda-\bar\mu}{\lambda-\bar\mu}\, Y_\lambda\right)^{-1},\; \lambda,\mu\in\dC_+.
\end{equation}

\subsection{The linear manifold $\cL_T$ and its representations}
In the following we will use the notations $D_Z:=(I-Z^*Z)^\half$ for a contraction $Z$ and $\sD_Z:=\cran D_Z.$

\begin{proposition}\label{yjdmt}
Let $T$ be a maximal dissipative operator, let $\lambda\in\dC_+$ and let $Y_\lambda$ be the Cayley transform of $T$.

(1) Define the  linear manifolds
\[
\cL_{*\lambda}:=\dom T^*+\ran D_{Y_\lambda},\;\;\cL_{\lambda}:=\dom T+\ran D_{Y^*_\lambda}.
\]
Then both of them
do not depend on $\lambda\in\dC_+$ and, moreover, they coincide with the linear manifold
$$\ran\left(\IM \left(T^*-i I\right)^{-1}\right)^\half.$$

(2) The following are equivalent:
\begin{enumerate}
\def\labelenumi{\rm (\roman{enumi})}
\item $\dom T^*\cap\ran D_{Y_\lambda}=\{0\}$;
\item $\dom T\cap\ran D_{Y^*_\lambda}=\{0\}$.
\end{enumerate}

(3) Let $\mu\ne \lambda$ and $\mu\in\dC_+$. Then
\[
\begin{array}{l}
\dom T^*\cap\ran D_{Y_\lambda}=\{0\}\Longrightarrow\dom T^*\cap\ran D_{Y_\mu}=\{0\},\\
 \dom T\cap\ran D_{Y^*_\lambda}=\{0\}\Longrightarrow \dom T\cap\ran D_{Y^*_\mu}=\{0\}.
\end{array}
\]

\end{proposition}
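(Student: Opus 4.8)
The central observation is that all three linear manifolds in part (1) are images of $\cH$ (or of a deficiency-type subspace) under certain bounded operators built from $Y_\lambda$, and that the defect operators $D_{Y_\lambda}$ and $D_{Y_\lambda^*}$ are related to the imaginary part of the resolvent of $T$ via the identity \eqref{theccayl}. Concretely, $4\IM\lambda\cdot\IM(Tf,f)=\|D_{Y_\lambda}\psi\|^2$ when $\psi=(T-\bar\lambda I)f$, so that, writing $f=(T-\bar\lambda I)^{-1}\psi$, one gets $D_{Y_\lambda}^2=4\IM\lambda\,\IM\bigl((T-\bar\lambda I)^{-1}\bigr)$ as a bounded operator on $\cH$ (here $\IM A:=\tfrac{1}{2i}(A-A^*)$). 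Since $\ran A^\half=\ran |A|^\half$ for a bounded nonnegative $A$, and since for $\lambda=i$ this gives $D_{Y_i}^2=4\,\IM(T-iI)^{-1}$, I would first record that
\[
\ran D_{Y_\lambda}=\ran\bigl(\IM (T-\bar\lambda I)^{-1}\bigr)^\half,\qquad
\ran D_{Y_\lambda^*}=\ran\bigl(\IM (T-\bar\lambda I)^{-1}\bigr)^\half
\]
(the second because $\|D_{Y_\lambda^*}\phi\|^2$ computed from \eqref{cghzo} and \eqref{jhfnyjt} yields the same nonnegative operator, up to the positive factor $4\IM\lambda$; one uses $-T^*$ maximal dissipative and the first two bullet facts of the introduction). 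Then $\ran D_{Y_i}=\ran D_{Y_i^*}=\bigl(\ran(\IM(T^*-iI)^{-1})^\half\bigr)$, because $\IM(T-iI)^{-1}$ and $\IM(T^*-iI)^{-1}$ differ only by a sign-and-adjoint that does not change the closed range of the square root. This identifies the "$\ran D$" pieces with the target manifold, independently of $\lambda$.

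Next, for the $\lambda$-independence of the full manifolds $\cL_{*\lambda}$ and $\cL_\lambda$, I would exploit \eqref{thecayl}: $\ran(I-Y_\lambda)=\dom T$ and, applying the same to $-T^*$, $\ran(I-Y_\lambda^*)=\dom T^*$. So $\dom T\subseteq\ran D_{Y_\lambda^*}$-closure is false in general, but the point is milder: since $\ran D_{Y_\lambda}$ already does not depend on $\lambda$ (Step 1) and $\dom T^*$ trivially does not, $\cL_{*\lambda}$ does not depend on $\lambda$, and likewise $\cL_\lambda$. Finally, to get $\cL_{*\lambda}=\ran(\IM(T^*-iI)^{-1})^\half$, I would show the inclusion $\dom T^*\subseteq\ran(\IM(T^*-iI)^{-1})^\half$: given $g\in\dom T^*$, write $g=(I-Y_i^*)\psi$ from \eqref{jhfnyjt}; then $g=(I-Y_i^*)\psi$ and one checks $I-Y_i^*=D_{Y_i^*}\,W$ for a suitable contraction $W$ (polar-type factorization, or directly: $\|(I-Y_i^*)\psi\|\le\sqrt{2}\|D_{Y_i^*}\psi\|$ fails in general, so instead use that $(I-Y_i^*)^*(I-Y_i^*)=2\IM(I-Y_i^*)\le$ something controlled by $D_{Y_i^*}^2$ when $-T^*$ dissipative — this is where the dissipativity is essential). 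The cleanest route is: $\RE(I-Y_i^*)=\tfrac12 D_{Y_i^*}^2 + \tfrac12(I-Y_i^*)^*(I-Y_i^*)\cdot(\text{nonneg})$, from which $\ran(I-Y_i^*)\subseteq\ran D_{Y_i^*}$ by the Douglas range-inclusion lemma, giving $\dom T^*\subseteq\ran D_{Y_i}$ (after the identification of Step 1), hence $\cL_{*\lambda}=\ran D_{Y_i}=\ran(\IM(T^*-iI)^{-1})^\half$, and symmetrically for $\cL_\lambda$.

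For part (2), the equivalence (i)$\Leftrightarrow$(ii): both $\dom T^*\cap\ran D_{Y_\lambda}$ and $\dom T\cap\ran D_{Y_\lambda^*}$ equal, by part (1), the intersection of two copies of the same manifold $\ran(\IM(T^*-iI)^{-1})^\half$ with $\dom T^*$ resp. $\dom T$; but a sharper and cleaner argument uses the unitary $U:\sD_{Y_\lambda}\to\sD_{Y_\lambda^*}$ intertwining (from the defect-operator identity $Y_\lambda D_{Y_\lambda}=D_{Y_\lambda^*}Y_\lambda$, which holds for any contraction) together with $\dom T=\ran(I-Y_\lambda)$, $\dom T^*=\ran(I-Y_\lambda^*)$. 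Tracking a vector $x=D_{Y_\lambda}\xi\in\dom T^*$ through this intertwiner and through \eqref{jhfnyjt} shows it produces a corresponding vector in $\dom T\cap\ran D_{Y_\lambda^*}$, and vice versa; so one intersection is zero iff the other is. For part (3), the implication is immediate from part (1): $\ran D_{Y_\lambda}=\ran D_{Y_\mu}$ and $\dom T^*$ is fixed, so $\dom T^*\cap\ran D_{Y_\lambda}=\dom T^*\cap\ran D_{Y_\mu}$ outright — in fact equality, not just one-way implication, though the statement only claims the implication. I expect the main obstacle to be Step 1, specifically verifying that $\ran D_{Y_\lambda^*}$ (defect of the adjoint) coincides with $\ran D_{Y_\lambda}$ rather than with the range of the square root of $\IM(T-\lambda I)^{-1}$ with the "wrong" sign; this requires care with the identity $\IM(T^*-iI)^{-1}=\bigl((T-iI)^{-1}\bigr)^*$-bookkeeping and the fact that $\cran A^\half=\cran(A^*A)^{1/4}$ is insensitive to replacing $A$ by $A^*$. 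Once the resolvent-imaginary-part identity is pinned down, the rest is the Douglas lemma and routine Cayley-transform algebra already assembled in the excerpt.
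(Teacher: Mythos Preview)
Your central identification in Step~1 is wrong, and everything downstream depends on it. From \eqref{theccayl} you correctly get $(D_{Y_\lambda}^2\psi,\psi)=4\IM\lambda\,\IM(Tf,f)$ with $\psi=(T-\bar\lambda I)f$, but this does \emph{not} say $D_{Y_\lambda}^2=4\IM\lambda\,\IM(T-\bar\lambda I)^{-1}$. A direct computation (expand $I-Y_\lambda^*Y_\lambda$ using $I-Y_\lambda=2i\IM\lambda\,(T-\bar\lambda I)^{-1}$) gives instead
\[
D_{Y_\lambda}^2 \;=\; 2\bigl(I-\RE Y_\lambda\bigr)\;-\;(I-Y_\lambda^*)(I-Y_\lambda),
\]
and it is $I-\RE Y_\lambda$, not $D_{Y_\lambda}^2$, that equals $-2\IM\lambda\,\IM(T-\bar\lambda I)^{-1}$. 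Consequently $\ran D_{Y_\lambda}$ is in general a \emph{proper} sub-manifold of $\ran\bigl(\IM(T^*-iI)^{-1}\bigr)^{\half}$. A concrete counterexample: let $T$ be multiplication by $it$ on $L^2(\dR_+)$ and $\lambda=i$; then $Y_i=(t-1)/(t+1)$ is selfadjoint, $D_{Y_i}^2=4t/(t+1)^2$, and $I-\RE Y_i=2/(t+1)$. The constant function on $[0,1]$ lies in $\dom T^*$ but not in $\ran D_{Y_i}$ (it fails the $t\to 0$ integrability constraint coming from the factor $\sqrt t$), so $\dom T^*\not\subseteq\ran D_{Y_i}$ and your conclusion $\cL_{*\lambda}=\ran D_{Y_i}$ is false.

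The paper's route avoids this by never trying to identify $\ran D_{Y_\lambda}$ alone. It uses the Fillmore--Williams range-sum identity $\ran F+\ran G=\ran(FF^*+GG^*)^{\half}$ with $F=I-Y_\lambda^*$ (so $\ran F=\dom T^*$) and $G=D_{Y_\lambda}$, which gives directly
\[
\cL_{*\lambda}=\ran\bigl((I-Y_\lambda^*)(I-Y_\lambda)+D_{Y_\lambda}^2\bigr)^{\half}=\ran\bigl(2I-Y_\lambda^*-Y_\lambda\bigr)^{\half}=\ran(I-\RE Y_\lambda)^{\half},
\]
and the same computation with $F=I-Y_\lambda$, $G=D_{Y_\lambda^*}$ shows $\cL_\lambda$ equals the same thing. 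This is the missing key step. For $\lambda$-independence the paper does \emph{not} claim $\ran D_{Y_\lambda}$ is $\lambda$-independent; it uses the M\"obius relation \eqref{yvyl} and the Sz.-Nagy--Foias formula $\ran D_{\widehat Y}=(I-aY^*)^{-1}\ran D_Y$ to show only that $\ran D_{Y_\mu}\subset\dom T^*+\ran D_{Y_\lambda}$, which suffices. Your argument for part~(2), via the intertwining $Y_\lambda D_{Y_\lambda}=D_{Y_\lambda^*}Y_\lambda$, is close in spirit to the paper's (which manipulates $D_{Y_\lambda^*}h=(I-Y_\lambda)g$ using $Y_\lambda^*D_{Y_\lambda^*}=D_{Y_\lambda}Y_\lambda^*$); that part is salvageable. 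Your Douglas-lemma sketch for $\dom T^*\subseteq\ran D_{Y_i^*}$, however, would require $\|(I-Y_i^*)\psi\|\le C\|D_{Y_i^*}\psi\|$, which the same example refutes.
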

\begin{proof}
First we show that for fixed $\lambda\in\dC_+$ the equality $\cL_{*\lambda}=\cL_\lambda$ holds.

We use the following equality for bounded operators $F,G\in\bB(\cH)$ (see \cite[Theorem 2.2]{FW}):
\[
\ran F+\ran G=\ran \left(FF^*+GG^*\right)^\half.
\]
Since $\dom T=\ran (I-Y_\lambda)$ and $\dom T^*=\ran (I-Y^*_\lambda)$, we get
\[
\begin{array}{l}
\ran T^*+\ran D_{Y_\lambda}= \ran (I-Y^*_\lambda)+\ran D_{Y_\lambda}=\ran\left((I-Y^*_\lambda)(I-Y_\lambda)+D^2_{Y_\lambda}\right)^\half\\
=\ran\left((I-Y^*_\lambda)(I-Y_\lambda)+I-Y^*_\lambda Y_\lambda\right)^\half=\ran\left(2I- (Y^*_\lambda+ Y_\lambda))\right)^\half.
\end{array}
\]
Similarly
\[
\ran T+\ran D_{Y^*_\lambda}=\ran\left(2I- (Y^*_\lambda+ Y_\lambda))\right)^\half=\ran\left(I- \cfrac{Y^*_\lambda+ Y_\lambda}{2}\right)^\half.
\]
Thus,
\begin{equation}\label{htujy}
\cL_{*\lambda}=\cL_\lambda=\ran\left (I-\RE Y_\lambda\right)^{\half}.
\end{equation}
Let us show that $\cL_{*\lambda}$ does not depend on $\lambda\in\dC_+$.

Let $Y$ be a contraction on the Hilbert space $\cH$ and let $a$ and $\eta$ be complex numbers, $|a|<1$, $|\eta|=1$.
Set
\[
\wh Y=\eta(Y-aI)(I-\bar a Y)^{-1}.
\]
Then $\wh Y$ is a contraction and $\ran D_{\wh Y}=(I-aY^*)^{-1}\ran D_Y$ (see \cite[Chapter VI, (1.7), (1.8)]{SF}).
 Hence, from \eqref{yvyl}
\begin{equation}\label{dczp}
 \ran D_{Y_\mu}=(I-aY^*_\lambda)^{-1}\ran D_{Y_\lambda},\; \; a=\frac{\lambda-\mu}{\bar\lambda-\mu},\;\;\lambda,\mu\in\dC_+.
\end{equation}
From \eqref{cghzo} one gets
\[
(I-aY^*_\lambda)^{-1}=\cfrac{1}{1-a}(T^*-\lambda I)(T^*-\xi I)^{-1},\; \xi=\cfrac{\lambda-a\bar\lambda}{1-a}.
\]
Since
\[
(T^*-\lambda I)(T^*-\xi I)^{-1}=I+(\xi-\lambda)(T^*-\xi I)^{-1},
\]
and for any $f\in\cH$
\[
(I-aY^*_\lambda)^{-1}D_{Y_\lambda} f=\cfrac{1}{1-a}\left(D_{Y_\lambda}f+(\xi-\lambda)(T^*-\xi I)^{-1}D_{Y_\lambda}f\right)\in\dom T^*+\ran D_{Y_\lambda},
\]
the relation for $\ran D_{Y_\mu}$ in \eqref{dczp} implies the inclusion
\[
\ran D_{Y_\mu}\subset \dom T^*+\ran D_{Y_\lambda}\;\;\forall \lambda,\mu\in\dC_+.
\]
Thus, the latter yields that $\cL_\lambda=\cL_\mu$ and, moreover, the implication
$\dom T^*\cap\ran D_{Y_\lambda}=\{0\}\Longrightarrow\dom T^*\cap\ran D_{Y_\mu}=\{0\}$ holds. 

Let us prove the equivalence (i) $\Longleftrightarrow$ (ii) in (2). 
Note that $\ker (I-Y_\lambda)=\ker(I-Y^*_\lambda)=\{0\}.$
Suppose that
\[
 D_{Y_\lambda ^*} h=(I-Y_\lambda)g,\; h,g\in \sH.
\]
Then due to the equality $Y^*_\lambda D_{Y_\lambda ^*}=D_{Y_\lambda} Y_\lambda^*$
we get
\begin{multline*}
Y_\lambda^*D_{Y_\lambda ^*} h= D_{Y_\lambda}  Y_\lambda^*h=Y_\lambda^*(I-Y_\lambda)g\\
= D_{Y_\lambda}^2 g-(I-Y_\lambda^*)g\Longleftrightarrow   D_{Y_\lambda }\left( D_{Y_\lambda }g-Y_\lambda^* h\right)=(I-Y_\lambda^*)g.
\end{multline*}
It follows that
$$\ran  D_{Y_\lambda }\cap\ran (I-Y_\lambda^*)=\{0\}\Longrightarrow \ran D_{Y_\lambda ^*}\cap\ran (I-Y_\lambda)=\{0\}.$$
Similarly
$$\ran D_{Y_\lambda ^*}\cap\ran (I-Y_\lambda)=\{0\}\Longrightarrow \ran  D_{Y_\lambda }\cap\ran (I-Y_\lambda^*)=\{0\}.$$

A calculation gives the equality
\begin{equation}\label{rjhtymrdf}
I-\RE Y_\lambda=-2\IM \lambda\, \IM \left[(T-\bar\lambda I)^{-1}\right]=2\IM\lambda\,\IM\left[(T^*-\lambda I)^{-1}\right],\; \lambda\in\dC_+.
\end{equation}
Now from \eqref{htujy} and \eqref{rjhtymrdf} we get
\[
\cL_{*\lambda}=\cL_\lambda=\ran\left(\IM \left(-T-i I\right)^{-1}\right)^\half=\ran\left(\IM \left(T^*-i I\right)^{-1}\right)^\half.
\]
Besides from \eqref{htujy}
\[
\cL_{*\lambda}=\cL_\lambda\supset \dom T+\dom T^*.
\]
The proof is complete.
\end{proof}
In the following we will use the notation
\begin{equation}\label{ctyn29a}
\cL_T:=\ran\left(\IM \left(-T-i I\right)^{-1}\right)^\half=\ran\left(\IM \left(T^*-i I\right)^{-1}\right)^\half
\end{equation}
Due to Proposition \ref{yjdmt} the linear manifold $\cL_T$ admits the representations
\begin{equation}\label{orn11}
\cL_T=\dom T^*+\ran D_{Y_\lambda^*}=\dom T+\ran D_{Y_\lambda}\;\;\forall \lambda\in\dC_+
\end{equation}
and the inclusion $\dom T+\dom T^*\subseteq\cL_T$ holds.

\subsection{Hermitian domains and Hermitian parts of maximal dissipative operators}

\begin{definition} \label{thvbnljv}\cite{Kuzhel}. Let $T$ be a maximal dissipative operator, then the set $\sS_T$, defined in
\eqref{kerna}
is called the Hermitian domain of $T$.
\end{definition}

\begin{proposition}\label{xnjnj}
Let $T$ be a maximal dissipative operator in the Hilbert space $\cH$.  Then
\begin{enumerate}

\item the Hermitian domain $\sS_T$ is a linear manifold and
\begin{equation}\label{kerna2}
\sS_T=\sS_{T^*}=\{f\in\dom T\cap\dom T^*: Tf=T^*f\}=\ker(\IM T);
\end{equation}
hence in the case $\sS_T\ne\{0\}$, the operator $S$ given by
\begin{equation}\label{heroper}
\dom S= \sS_T,\; S=T\uphar\dom S
\end{equation}
is  symmetric and closed  and both operators $T$ and $T^*$ are extensions of $S$;
\item the following are equivalent:
\begin{enumerate}
\item $\sS_T$ is dense,
\item $\sD_{Y_{\lambda}}\cap\ran (I-Y_{\lambda}^*)=\{0\},$
\item $ \sD_{Y_{\lambda}^*}\cap\ran (I-Y_{\lambda})=\{0\}$,
where $Y_\lambda$ is the Cayley transform of $T$;
\end{enumerate}
 \item  if $\sS_T$ is dense in $\cH$, then $\sS_T=\dom T\cap\dom T^*$; 
\item for each $\lambda\in\dC_+$ holds the equalities
\begin{equation}\label{jhnljg11}
\sL=\cH\ominus\sS_T=(T^*-\lambda I)(\dom T^*\cap \sD_{Y_{\lambda}})=(T-\bar \lambda I)(\dom T\cap \sD_{Y^*_{\lambda}}).
\end{equation}
\end{enumerate}

\end{proposition}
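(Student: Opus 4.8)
The plan is to run everything through the Cayley transform $Y_\lambda$ of $T$, with $\lambda\in\dC_+$ fixed, using the relations \eqref{thecayl}, \eqref{theccayl}, \eqref{cghzo}, \eqref{jhfnyjt}: recall $\ran(I-Y_\lambda)=\dom T$, $\ran(I-Y_\lambda^*)=\dom T^*$, that $I-Y_\lambda$ and $I-Y_\lambda^*$ are injective, and that for a contraction $Z$ one has $\ker D_Z=(\cran D_Z)^\perp=\sD_Z^\perp$. For part (1), that $\sS_T$ is a linear manifold is the assertion that the null space of the nonnegative Hermitian form $\gamma_T[f,g]=\frac{1}{2i}((Tf,g)-(f,Tg))$ (whose diagonal is $\IM(Tf,f)$) is a subspace, which is the Cauchy--Schwarz inequality for nonnegative forms. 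The heart of the matter is the identity $\sS_T=(I-Y_\lambda)(\ker D_{Y_\lambda})$: taking $f\in\dom T$, $f=(I-Y_\lambda)h$, one gets from \eqref{jhfnyjt} that $\psi:=(T-\bar\lambda I)f=2i\,\IM\lambda\,h$, and \eqref{theccayl} gives $\|\psi\|^2-\|Y_\lambda\psi\|^2=4\,\IM\lambda\,\IM(Tf,f)$, so $f\in\sS_T\iff D_{Y_\lambda}\psi=0\iff h\in\ker D_{Y_\lambda}$. If moreover $f\in\sS_T$, then $Y_\lambda^*Y_\lambda\psi=\psi$, and substituting $\chi:=-\frac{1}{2i\,\IM\lambda}Y_\lambda\psi$ into \eqref{jhfnyjt} one checks directly that $f=(I-Y_\lambda^*)\chi\in\dom T^*$ and $T^*f=Tf$; the converse inclusion is immediate, since $Tf=T^*f$ forces $(Tf,f)=(f,Tf)\in\dR$. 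This establishes $\sS_T=\{f\in\dom T\cap\dom T^*:Tf=T^*f\}$; since $-T^*$ is maximal dissipative with adjoint $-T$ ($T$ being closed), applying the equality to $-T^*$ yields $\sS_{T^*}=\sS_{-T^*}=\sS_T$, and the coincidence with $\ker(\IM T)$ is the definition of the latter (the null space of $\gamma_T$). Granting \eqref{kerna2}, the rest of (1) follows: $\IM(Sf,f)=\IM(Tf,f)=0$ on $\dom S=\sS_T$, so $S$ is symmetric; $\sS_T$ is closed in the graph norm of $T$ (if $f_n\to f$, $Tf_n\to Tf$, $f_n\in\sS_T$, then $\IM(Tf,f)=\lim_n\IM(Tf_n,f_n)=0$), so $S=T\uphar\sS_T$ is closed; and $S\subseteq T$ trivially, $S\subseteq T^*$ by \eqref{kerna2}.

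For parts (2) and (4): from $\sS_T=(I-Y_\lambda)\sD_{Y_\lambda}^\perp$, an element $x\in\cH$ is orthogonal to $\sS_T$ iff $(I-Y_\lambda^*)x\perp\sD_{Y_\lambda}^\perp$, i.e.
\[
\sS_T^\perp=\{x\in\cH:(I-Y_\lambda^*)x\in\sD_{Y_\lambda}\}.
\]
Since $I-Y_\lambda^*$ is injective with range $\dom T^*$, this gives $\sS_T$ dense $\iff\sD_{Y_\lambda}\cap\dom T^*=\{0\}$, which is (b); running the same computation for $\sS_{T^*}=\sS_T$, using the companion identity $\sS_{T^*}=(I-Y_\lambda^*)(\ker D_{Y_\lambda^*})$ proved exactly as above via \eqref{cghzo}, gives $\sS_T$ dense $\iff\sD_{Y_\lambda^*}\cap\dom T=\{0\}$, which is (c). For \eqref{jhnljg11}: the displayed formula for $\sS_T^\perp$ reads $\dom T^*\cap\sD_{Y_\lambda}=(I-Y_\lambda^*)\sS_T^\perp$, and applying $T^*-\lambda I$ — with $(T^*-\lambda I)(I-Y_\lambda^*)x=-2i\,\IM\lambda\,x$ from \eqref{jhfnyjt} — collapses the right-hand side onto $\sS_T^\perp=\sL$; symmetrically $(T-\bar\lambda I)(\dom T\cap\sD_{Y_\lambda^*})=2i\,\IM\lambda\,\sS_T^\perp=\sL$.

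For part (3): if $\sS_T$ is dense, then $S$ from \eqref{heroper} is densely defined, so $S^*$ exists; from $S\subseteq T$ and $S\subseteq T^*$ (part (1)) together with $T^{**}=T$ we obtain, on taking adjoints, $T\subseteq S^*$ and $T^*\subseteq S^*$. Hence every $f\in\dom T\cap\dom T^*$ satisfies $Tf=S^*f=T^*f$, so $f\in\sS_T$ by \eqref{kerna2}; combined with $\sS_T\subseteq\dom T\cap\dom T^*$ from part (1), this is $\sS_T=\dom T\cap\dom T^*$.

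The main obstacle is the Cayley computation in part (1): the real content is not that $\IM(Tf,f)=0$ on $\sS_T$ but that such an $f$ automatically lies in $\dom T^*$ with $Tf=T^*f$, and establishing this forces one to keep careful track of both $Y_\lambda$ and $Y_\lambda^*$ and of the scalar $2i\,\IM\lambda$. A secondary point, in part (2), is that the intersection governing density of $\sS_T$ is with the \emph{closed} subspace $\sD_{Y_\lambda}=\cran D_{Y_\lambda}$, not with $\ran D_{Y_\lambda}$ (contrast Proposition \ref{yjdmt}(2)); this is precisely why the statement is phrased with $\sD_{Y_\lambda}$.
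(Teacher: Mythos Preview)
Your proof is correct and follows essentially the same route as the paper: both hinge on the Cayley-transform identity $\sS_T=(I-Y_\lambda)\ker D_{Y_\lambda}=(I-Y_\lambda^*)\ker D_{Y_\lambda^*}$, from which (2) and (4) drop out via the orthogonal-complement computation $\sS_T^\perp=\{x:(I-Y_\lambda^*)x\in\sD_{Y_\lambda}\}$ exactly as you do. The only difference is that you supply full details for (1) and (3) --- the explicit verification that $f\in\sS_T$ lies in $\dom T^*$ with $T^*f=Tf$, and the $S^*$-argument for (3) --- whereas the paper cites \cite[Proposition 3.2]{ArlCAOT2023} for (1) and simply asserts that (3) ``follows from \eqref{kerna2}''.
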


\begin{proof}
The statement (1) is proved in \cite[Proposition 3.2]{ArlCAOT2023}. 

(2) From \eqref{theccayl} and \eqref{kerna2} it follows that
\begin{equation}\label{kerna3}
\sS_{T}=\sS_{T^*}=(I-Y_\lambda)\ker D_{Y_\lambda}=(I-Y_\lambda^*)\ker D_{Y_\lambda^*}
\end{equation}
we get that (a)$\Longleftrightarrow $ (b) $\Longleftrightarrow$ (c).

Statement (3) follows from \eqref{kerna2}.

(4) Equalities in \eqref{kerna3} yield that
\[
\sS^\perp_T=\{ f: (I-Y^*_\lambda)f\in\sD_{Y_\lambda}\}=\{f: (I-Y_\lambda)f\in\sD_{Y^*_\lambda}\}
\]
Using \eqref{thecayl} we get
\[
\sS^\perp_T=\{ f: (T^*-\lambda I)^{-1}f\in\sD_{Y_\lambda}\}=\{f: (T-\bar\lambda I)^{-1}f\in\sD_{Y^*_\lambda}\}.
\]
Thus, \eqref{jhnljg11} holds true.
\end{proof}
 The operator $S$ defined in \eqref{heroper} is called the Hermitian part of $T$ \cite{Kuzhel}.

\begin{theorem} \label{dljgec}
Let $S$ be a closed symmetric operator and let $\lambda\in\dC_+$. Then the formulas
\begin{equation}\label{ljghfci}
\left\{ \begin{array}{l}\dom T=\dom S\dot+(I-M_\lambda)\sN_\lambda\\
T(f_S+(I-M_\lambda)\f_\lambda)=Sf_S+\lambda\f_\lambda-\bar\lambda M_\lambda\f_\lambda\qquad (f_S\in\dom S,\;\f_\lambda\in\sN_\lambda)
\end{array}\right.
\end{equation}
establish a one-to-one correspondence between all maximal dissipative extensions $T$ of $S$ such that $\sS_T=\dom S$ and all
contractions $M_\lambda\in\bB(\sN_\lambda,\sN_{\bar\lambda})$ such that
\begin{equation}\label{dfyn}
||M_\lambda\f_\lambda||< ||\f_\lambda||\;\; \forall \f_\lambda\in\sN_\lambda\setminus\{0\}.
\end{equation}
The adjoint operator can be described as follows
\begin{equation}\label{ljghfci2}
\left\{ \begin{array}{l}\dom T^*=\dom S\dot+(I-M_\lambda^*)\sN_{\bar\lambda}\\
T^*(f_S+(I-M_\lambda ^*)\f_{\bar \lambda})=Sf_S+\bar\lambda\f_{\bar\lambda}-\lambda M_\lambda ^*\f_{\bar\lambda}\qquad (f_S\in\dom S,\;\f_{\bar\lambda}\in\sN_{\bar\lambda})
\end{array}\right..
\end{equation}
\end{theorem}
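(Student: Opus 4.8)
The plan is to verify that the formula \eqref{ljghfci} produces a maximal dissipative extension $T$ of $S$ with $\sS_T=\dom S$, that every such $T$ arises this way, and that the correspondence with contractions satisfying \eqref{dfyn} is a bijection; then \eqref{ljghfci2} follows by computing the adjoint. First I would pass to the Cayley transform. Fix $\lambda\in\dC_+$. A maximal dissipative $T$ corresponds via \eqref{rtkbnh} to a contraction $Y_\lambda\in\bB(\cH)$ with $\ker(I-Y_\lambda)=\{0\}$, and $T$ extends $S$ iff $Y_\lambda$ restricts, on $\sM_{\bar\lambda}=\ran(S-\bar\lambda I)$, to the isometric Cayley transform $U_\lambda:=(S-\lambda I)(S-\bar\lambda I)^{-1}$ of $S$, which maps $\sM_{\bar\lambda}$ onto $\sM_\lambda$. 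So the maximal dissipative extensions of $S$ are parametrized by contractive extensions $Y_\lambda$ of $U_\lambda$ from $\sM_{\bar\lambda}$ to all of $\cH$; since $\sN_{\bar\lambda}=\sM_{\bar\lambda}^\perp$ and $\sN_\lambda=\sM_\lambda^\perp$, such extensions are in bijection with contractions $M_\lambda\in\bB(\sN_\lambda,\sN_{\bar\lambda})$ acting $\sN_{\bar\lambda}\ni\f_{\bar\lambda}\mapsto$ ... — more precisely, the block form of $Y_\lambda$ with respect to $\cH=\sM_{\bar\lambda}\oplus\sN_{\bar\lambda}$ on the domain side and $\cH=\sM_\lambda\oplus\sN_\lambda$ on the range side has $(1,1)$-entry $U_\lambda$, forcing the $(2,1)$- and $(1,2)$-entries to vanish (contractivity together with $U_\lambda$ isometric), leaving exactly one free contraction $\sN_{\bar\lambda}\to\sN_\lambda$; dualizing names it $M_\lambda^*$ with $M_\lambda\in\bB(\sN_\lambda,\sN_{\bar\lambda})$. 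Translating $Y_\lambda=U_\lambda\oplus M_\lambda^*$ (in the above block sense) back through \eqref{thecayl}–\eqref{jhfnyjt}, i.e. $\dom T=\ran(I-Y_\lambda)$ and $Tf=(\lambda I-\bar\lambda Y_\lambda)(I-Y_\lambda)^{-1}f$, and using $\ran(I-U_\lambda)=\dom S$ from \eqref{thecayl} applied to $S$, yields precisely \eqref{ljghfci}: the $\sM_{\bar\lambda}$-part regenerates $\dom S$ with action $S$, and a vector $\f_\lambda\in\sN_\lambda$ contributes $(I-M_\lambda)\f_\lambda$ to the domain (note $Y_\lambda^*$ carries $\sN_\lambda$ into $\sN_{\bar\lambda}$ via $M_\lambda$) with image $\lambda\f_\lambda-\bar\lambda M_\lambda\f_\lambda$ under $T$.

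Next I would pin down the conditions. That $T$ is dissipative and maximal is automatic from $Y_\lambda$ being a contraction with $\ker(I-Y_\lambda)=\{0\}$; the latter is equivalent, given the block structure, to $\ker(I-M_\lambda^*)=\{0\}$ on $\sN_{\bar\lambda}$, which for a contraction between the $\sN$'s is — after transposing — the strict-contractivity condition \eqref{dfyn}, $\|M_\lambda\f_\lambda\|<\|\f_\lambda\|$ for all nonzero $\f_\lambda$. Indeed $\ker(I-Y_\lambda)=\{0\}$ is what makes $T$ single-valued and densely defined with $\dom S\dot+(I-M_\lambda)\sN_\lambda$ a genuine direct sum; conversely if \eqref{dfyn} fails there is a fixed unit vector and $(I-M_\lambda)$ has nontrivial kernel, so the sum degenerates. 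The identity $\sS_T=\dom S$ I would get from \eqref{kerna3}: $\sS_T=(I-Y_\lambda)\ker D_{Y_\lambda}$, and $\ker D_{Y_\lambda}=\ker D_{U_\lambda}\oplus\ker D_{M_\lambda^*}$ in the block decomposition; since $U_\lambda$ is an isometry $D_{U_\lambda}=0$ so $\ker D_{U_\lambda}=\sM_{\bar\lambda}$, while $D_{M_\lambda^*}$ has trivial kernel exactly under \eqref{dfyn} (strict contraction on the whole unit sphere), so $\ker D_{Y_\lambda}=\sM_{\bar\lambda}$ and $(I-Y_\lambda)\sM_{\bar\lambda}=(I-U_\lambda)\sM_{\bar\lambda}=\dom S$. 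This also shows why \eqref{dfyn}, not mere contractivity, is needed: with equality somewhere, $\sS_T$ would be strictly larger than $\dom S$.

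Finally, \eqref{ljghfci2} is obtained either by repeating the argument for $-T^*$ (maximal dissipative, with Cayley transform $Y_\lambda^*$ by \eqref{cghzo}, hence block parameter $M_\lambda$ rather than $M_\lambda^*$, and with the roles of $\sN_\lambda,\sN_{\bar\lambda}$ swapped), or directly: a vector $g=f_S+(I-M_\lambda^*)\f_{\bar\lambda}$ lies in $\dom T^*$ and $T^*g=Sf_S+\bar\lambda\f_{\bar\lambda}-\lambda M_\lambda^*\f_{\bar\lambda}$ iff $(Tu,g)=(u,T^*g)$ for all $u\in\dom T$, which reduces via the Green-type identity for the Cayley transform to the isometry of $U_\lambda$ and the defining adjoint relation between $M_\lambda$ and $M_\lambda^*$; a dimension/deficiency count (von Neumann's formula $\dom T^*/\dom S\cong\sN_{\bar\lambda}$ for extensions with $\sS_{T}=\dom S$, cf. Theorem \ref{ivekmzy} and the preceding discussion) shows the displayed $\dom T^*$ is all of it.

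\smallskip
\emph{Main obstacle.} The one genuinely delicate point is the passage from "contractive extension of the partial isometry $U_\lambda$ on $\sM_{\bar\lambda}$" to "arbitrary single contraction $M_\lambda:\sN_\lambda\to\sN_{\bar\lambda}$ with no off-diagonal freedom": one must show that contractivity of the $2\times2$ operator matrix $\begin{bmatrix}U_\lambda&B\\ C&D\end{bmatrix}$ with $U_\lambda$ isometric forces $B=0$ and $C=0$ (a standard but easily-fumbled fact about $2\times2$ contraction matrices with an isometric corner), and to keep careful track of which deficiency space is the domain side and which the range side as one moves between $T$, $Y_\lambda$, $Y_\lambda^*$, and $T^*$ — it is here that the asymmetry $M_\lambda$ versus $M_\lambda^*$ in \eqref{ljghfci} versus \eqref{ljghfci2} has to be nailed down exactly, rather than up to taking adjoints.
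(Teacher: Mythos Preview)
Your overall strategy---pass to the Cayley transform, exhibit $Y_\lambda$ as a block-diagonal contraction with isometric corner $U_\lambda$, and read off $\sS_T$ from $\ker D_{Y_\lambda}$---is sound, and the computation $\sS_T=\dom S \Leftrightarrow \ker D_{M_\lambda}=\{0\}$ via \eqref{kerna3} is correct. But two things go wrong. First, your orthogonal decompositions are mislabelled: since $\sN_\mu := \sM_{\bar\mu}^\perp$, the domain side of $Y_\lambda$ is $\cH=\sM_{\bar\lambda}\oplus\sN_\lambda$ and the range side is $\cH=\sM_\lambda\oplus\sN_{\bar\lambda}$, so the free corner is $M_\lambda=Y_\lambda\uphar\sN_\lambda:\sN_\lambda\to\sN_{\bar\lambda}$ directly (cf.\ \eqref{jgthvl}); no ``dualizing'' is needed, and expressions like ``$\ker(I-M_\lambda^*)$ on $\sN_{\bar\lambda}$'' do not type-check.

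Second, and more seriously, you conflate two distinct conditions. The requirement $\ker(I-Y_\lambda)=\{0\}$, which makes $T$ a well-defined operator, is \emph{not} equivalent to \eqref{dfyn}. When $S$ is densely defined, $\ker(I-Y_\lambda)=\{0\}$ holds for \emph{every} contraction $M_\lambda$ (by linear independence of $\dom S,\sN_\lambda,\sN_{\bar\lambda}$), yet $\ker D_{M_\lambda}$ may be nonzero, giving $\sS_T\supsetneq\dom S$. When $S$ is non-densely defined---which the statement allows and the paper uses---$\ker(I-Y_\lambda)=\{0\}$ is equivalent, via \eqref{gthtctx1}, to the admissibility condition \eqref{admiss1}: $(M_\lambda-V_\lambda)\psi_\lambda\ne0$ for all $\psi_\lambda\in\sL_\lambda\setminus\{0\}$; without it the ``direct'' sum $\dom S\dot+(I-M_\lambda)\sN_\lambda$ collapses. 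The paper's proof separates these cleanly: it cites \cite[Theorem~1.1]{Straus1968} for the parametrization of \emph{all} maximal dissipative extensions by contractions $M_\lambda$ satisfying \eqref{admiss1}, then computes $\IM(T(f_S+(I-M_\lambda)\f_\lambda),f_S+(I-M_\lambda)\f_\lambda)=\IM\lambda\,\|D_{M_\lambda}\f_\lambda\|^2$ directly from \eqref{ljghfci} to single out those with $\sS_T=\dom S$ as exactly those satisfying \eqref{dfyn}, and finally notes that \eqref{dfyn} implies \eqref{admiss1} because $V_\lambda$ is an isometry. Your self-contained route can be repaired, but you must keep the two conditions apart and supply the admissibility step in the non-dense case.
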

\begin{proof}
By \cite[Theorem 1.1]{Straus1968} the formulas \eqref{ljghfci} establish a one-to-one correspondence between all maximal dissipative extensions of $S$ and all contractions
$M_\lambda\in\bB(\sN_\lambda,\sN_{\bar\lambda})$ such that
\begin{equation}\label{admiss1}
(M_\lambda-V_\lambda)\psi_\lambda\ne 0\;\; \forall \psi_\lambda\in\sL_\lambda\setminus\{0\},
\end{equation}
where $V_\lambda$ is defined by \eqref{forbis}.
Then from \eqref{ljghfci}
\[
\IM(T(f_S+(I-M_\lambda)\f_\lambda), f_S+(I-M_\lambda)\f_\lambda)=\IM\lambda\,||D_{M_\lambda}\f_\lambda||^2,\;
 f_S\in\dom S,\;\f_\lambda\in\sN_\lambda.
\]
It follows that if $f= f_S+(I-M_\lambda)\f_\lambda$, then $\IM (Tf,f)=0$ if and only if $||\f_\lambda||=||M_\lambda\f_\lambda||$. Hence
$f\in\sS_T$ if and only if $f=f_S$ $\Longleftrightarrow$
$||M_\lambda\f_\lambda||< ||\f_\lambda||$ for all $\f_\lambda\in\sN_\lambda\setminus\{0\}$. Finally we note that, because the operator $V_\lambda$ is an isometry, condition \eqref{dfyn}
yields \eqref{admiss1}.
\end{proof}

Observe that if $T$ is a maximal dissipative extension of $S$ of the form \eqref{ljghfci} with condition \eqref{dfyn}, then the Cayley transform \eqref{rtkbnh}
is the orthogonal sum
\begin{equation}\label{jgthvl}
Y_\lambda=U_\lambda \oplus M_\lambda,\;U_\lambda=(S-\lambda I)(S-\bar\lambda I)^{-1}:\sM_{\bar\lambda}\to\sM_\lambda,\; M_\lambda:=Y_\lambda\uphar\sN_\lambda:\sN_\lambda\to\sN_{\bar\lambda}.
\end{equation}
Since $U_\lambda$ is an isometry, we get that
\begin{equation}\label{defoperl}
D^2_{Y_\lambda}=D^2_{M_\lambda}P_{\sN_\lambda},\;D^2_{Y^*_\lambda}=D^2_{M^*_\lambda}P_{\sN_{\bar\lambda}}.
\end{equation}
\begin{remark}\label{dctulf}
If $T$ is a maximal dissipative extension of a non-dense symmetric operator $S$ and $\sL=(\dom S)^\perp$, then from \eqref{ljghfci}, \eqref{ljghfci2},  \eqref{xthdjy}, \eqref{gthtctx1}, and \eqref{gjlghjcn} it follows that 
\[
\dom T\cap\sN_{\bar\lambda}= \left(P_{\sN_{\bar\lambda}}-M_\lambda P_{\sN_\lambda}\right)\sL,
\;\dom T^*\cap\sN_{\lambda}=\left(P_{\sN_{\lambda}}-M^*_\lambda P_{\sN_{\bar\lambda}}\right)\sL\;\;\forall\lambda\in\dC_+.
\]
\end{remark}
\centerline{Simple maximal dissipative operators}
\begin{definition}\label{simple}

 A maximal dissipative operator $T$ is called simple if there is no a non-trivial reducing subspace of $T$ on which $T$ is a selfadjoint.

\end{definition}
Recall that a contraction $Y$ acting in a Hilbert space $\cH$ is called \textit{completely nonunitary} (c.n.u.) \cite{SF} if there is no a reducing subspace of $Y$ in which the operator $Y$ is unitary.
Every contraction $Y$ in $\cH$ admits the orthogonal decomposition  $Y=Y^{(0)}P_{\cH^{(0)}}\oplus Y^{(1)}P_{\cH^{(1)}}$, where $Y^{(0)}$ is a c.n.u. contraction in the ${\cH^{(0)}}$ and $Y^{(1)}$ is a unitary operator in ${\cH^{(1)}}$, $\cH={\cH^{(0)}}\oplus {\cH^{(1)}},$ see \cite[Theorem I.3.2]{SF}. The operator $Y^{(0)}$ is called completely nonunitary part of $Y$.

If $T$ is a  maximal dissipative operator, then the usage of the Cayley transform leads to the to decomposition
$$T=T^{(0)}P_{\cH^{(0)}}\oplus T^{(1)}P_{\cH^{(1)}},$$
where $T^{(0)}$ is a simple maximal dissipative operator in ${\cH^{(0)}}$ and $T^{(1)}$ is a selfadjoint in ${\cH^{(1)}}$.

Note that if $Y$ is a contraction in $\cH$ and if $\cH_1$ is an invariant subspace of $Y$ and $U_1:=Y\uphar\cH_1$ is a unitary operator, then $\cH_1$ reduces $Y$.

From Definition \ref{simplesym} and equalities \eqref{kerna}, \eqref{kerna2}, and \eqref{heroper} one can easily derive the following statement.
\begin{proposition}\label{sbvgk}
A maximal dissipative operator $T$ is simple if and only if the Hermitian part $S$ of $T$ defined in \eqref{heroper} is simple.
\end{proposition}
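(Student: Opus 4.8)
The plan is to establish the sharper statement that a nonzero subspace $\cH_1$ reduces $T$ with $T\uphar\cH_1$ selfadjoint in $\cH_1$ \emph{if and only if} $\cH_1$ reduces $S$ with $S\uphar\cH_1$ selfadjoint in $\cH_1$; by Definitions \ref{simplesym} and \ref{simple} this is exactly the asserted equivalence ``$T$ simple $\Longleftrightarrow$ $S$ simple''. (If $\sS_T=\{0\}$ then $\dom S=\{0\}$ and both operators are trivially simple, so one may assume $\sS_T\ne\{0\}$.)

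For the implication ``$T$-reducing $\Rightarrow$ $S$-reducing'', let $\cH_1$ reduce $T$ and $T_1:=T\uphar\cH_1$ be selfadjoint. For $f\in\dom S=\sS_T$ split $f=P_{\cH_1}f+(I-P_{\cH_1})f$; since $\cH_1$ reduces $T$, both pieces lie in $\dom T$, $TP_{\cH_1}f\in\cH_1$ and $T(I-P_{\cH_1})f\in\cH_1^{\perp}$, so the cross terms in $(Tf,f)$ cancel and
\[
0=\IM(Tf,f)=\IM\bigl(TP_{\cH_1}f,P_{\cH_1}f\bigr)+\IM\bigl(T(I-P_{\cH_1})f,(I-P_{\cH_1})f\bigr).
\]
Both summands are nonnegative because $T$ is dissipative, hence both vanish, and by \eqref{kerna} this gives $P_{\cH_1}f\in\sS_T=\dom S$ and $(I-P_{\cH_1})f\in\dom S$. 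Together with $SP_{\cH_1}f=TP_{\cH_1}f=P_{\cH_1}Tf=P_{\cH_1}Sf$ this shows $\cH_1$ reduces $S$; and since every $g\in\dom T\cap\cH_1$ satisfies $\IM(Tg,g)=\IM(T_1g,g)=0$ and so lies in $\sS_T$, we get $\dom S\cap\cH_1=\dom T\cap\cH_1$ and $S\uphar\cH_1=T_1$ is selfadjoint.

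For the converse, let $\cH_1$ reduce $S$ with $S_1:=S\uphar\cH_1$ selfadjoint in $\cH_1$. The crucial point is that $\cH_1$ is orthogonal to every deficiency subspace of $S$: since $\cH_1$ reduces $S-\bar\lambda I$ and $\ran(S_1-\bar\lambda I)=\cH_1$ for $\lambda\in\dC_+$, we get $\cH_1\subseteq\ran(S-\bar\lambda I)=\sM_{\bar\lambda}$, and symmetrically $\cH_1\subseteq\sM_\lambda$, whence $\cH_1\perp\sN_\lambda$ and $\cH_1\perp\sN_{\bar\lambda}$. Now, $T$ being a maximal dissipative extension of its Hermitian part, Theorem \ref{dljgec} represents it by a strict contraction $M_\lambda\in\bB(\sN_\lambda,\sN_{\bar\lambda})$: $\dom T=\dom S\dot+(I-M_\lambda)\sN_\lambda$ and $T(f_S+(I-M_\lambda)\f_\lambda)=Sf_S+\lambda\f_\lambda-\bar\lambda M_\lambda\f_\lambda$. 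Applying $P_{\cH_1}$ to $f=f_S+(I-M_\lambda)\f_\lambda\in\dom T$ annihilates the $\sN_\lambda$- and $\sN_{\bar\lambda}$-components, so $P_{\cH_1}f=P_{\cH_1}f_S\in\dom S\subseteq\dom T$ and $P_{\cH_1}Tf=P_{\cH_1}Sf_S=SP_{\cH_1}f_S=TP_{\cH_1}f$; hence $\cH_1$ reduces $T$. The same identity gives $\dom T\cap\cH_1=\dom S\cap\cH_1=\dom S_1$, so $T\uphar\cH_1=S_1$ is selfadjoint in $\cH_1$. Combining the two directions proves the proposition.

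The only genuinely nontrivial step is the converse, where a reducing subspace of $S$ is not obviously reducing for the larger operator $T$. What unlocks it is the orthogonality $\cH_1\perp\sN_\lambda,\sN_{\bar\lambda}$, which makes the ``deficiency part'' $(I-M_\lambda)\sN_\lambda$ of $\dom T$ invisible under $P_{\cH_1}$; granting the Straus parametrization of Theorem \ref{dljgec}, everything else is bookkeeping. (Alternatively one may argue via the Cayley transform $Y_\lambda=U_\lambda\oplus M_\lambda$ of \eqref{jgthvl}: $T$ is simple iff $Y_\lambda$ is completely nonunitary, and since $M_\lambda$ is a strict contraction the unitary part of $Y_\lambda$ coincides with the unitary part of the isometry $U_\lambda$, i.e.\ with the selfadjoint part of $S$.)
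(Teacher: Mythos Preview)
Your proof is correct. The paper does not actually spell out a proof of this proposition; it only remarks that the statement ``can easily be derived'' from Definition~\ref{simplesym} and \eqref{kerna}, \eqref{kerna2}, \eqref{heroper}, together with the observation (recorded just before the proposition) that an invariant subspace of a contraction on which the restriction is unitary is automatically reducing. Your write-up supplies the missing details, and both directions are handled cleanly.

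The one comment worth making is on approach. Your main argument for the harder direction (a selfadjoint reducing piece of $S$ lifts to $T$) goes through the Shtraus parametrisation of Theorem~\ref{dljgec}; this is perfectly valid, since $S$ is by construction the Hermitian part of $T$ so \eqref{dfyn} holds. The alternative you sketch at the end via the Cayley transform $Y_\lambda=U_\lambda\oplus M_\lambda$ of \eqref{jgthvl} is closer in spirit to the surrounding discussion in the paper: $T$ simple $\Longleftrightarrow$ $Y_\lambda$ c.n.u.; since $M_\lambda$ is a strict contraction by \eqref{dfyn} it has no unitary part, so the unitary part of $Y_\lambda$ equals that of the isometry $U_\lambda$, which in turn encodes the selfadjoint part of $S$. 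Both routes are short; the Cayley-transform version avoids invoking Theorem~\ref{dljgec} and uses only the elementary fact about contractions that the paper highlights.
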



\subsection{Boundary pairs associated with a maximal dissipative operator}\label{nov17a}
\subsubsection{Rigged Hilbert space associated with a closed operator}
Let $X$ be a closed and densely defined operator in the Hilbert space $\cH$. Denote by $\cH_X^+$ the Hilbert space $\dom X$ equipped by the inner product
\[
(f,g)_+=(Xf,Xg)+(f,g).
\]
Let $\cH_X^+\subset\cH\subset\cH_X^-$  be the corresponding rigged Hilbert space \cite{Ber}, where $\cH_X^-$ is the closure of $\cH$ with respect to the norm
\[
   \|h\|_-:=\sup\limits_{\varphi\in \cH_X^+,\|\varphi\|_{   {+}}=1}|(\varphi, h)|.
\]
 $\cH_X^-$ is the Hilbert space of all linear functionals on $\cH_X^+$ bounded with respect to $\|\cdot\|_+$ 
each of which is determined by a vector $h\in \cH_X^-$ via (see  e.g. \cite{Ber})
\[
   l_h(\varphi)=(\varphi, h),\quad \varphi\in \cH_X^+.
\]
Clearly,  $ \|f\| \!\le\! \|f\|_+$ and $\|Xf\| \!\le\! \|f\|_+$,  $f\in \cH_X^+$, hence $X\!\in\!\bB(\cH_X^+,\cH )$. Let $X^\times\!\in\!\bB(\cH, \cH_X^-)$ be the adjoint operator, ,~i.e.,
\[
   (Xf,g)=(f, X^\times g), \quad f\in \cH_X^+,\ g\in \cH.
\]
The operator $X^\times\in\bB(\cH, \cH_X^-)$ is the continuation of $X^*$ onto $\cH$. If $z\in \rho(X)$, then the resolvent
$(X-z I_\cH)^{-1}$ continuously maps $\cH$ onto $\cH_X^+$. Its adjoint is $(X^\times -\bar z I_\cH)^{-1}$:
\[
\left ((X-z I_\cH)^{-1}f,h\right)=\left(f, (X^\times -\bar z I_\cH)^{-1}h\right)\;\;\forall f\in\cH,\;\forall h\in\cH_X^-.
\]
The operator $(X^\times -\bar z I_\cH)^{-1}$ continuously maps $\cH_X^-$ onto $\cH$.
The imaginary part $\cfrac{1}{2i}(X-X^\times)$ belongs to
$\bB(\cH_X^+,\cH_X^-)$.
Observe that (see \cite{Shmulyan67}) for any bounded operator $\Gamma$, acting from $\cH_X^+$ into a Hilbert space $\cE$, for its adjoint $\Gamma^\times:\cE\to\cH_X^-$ holds
the relation
\begin{equation}\label{shmuly}
\ran \Gamma^\times=\left\{ h \in \cH_X^-:\sup\limits_{f\in \cH_X^+\setminus\ker\Gamma}\cfrac{|(f,h)|}{||\Gamma f||_\cE}<\infty \right\}.
\end{equation}
Due to  \cite[Lemma 3]{Shmulyan67}, for a bounded operator $A$ acting in a Hilbert space $\sH$, the following statement holds: the vector
 $h$ belongs to $\ran A$ if and only if
\[
\sup\limits_{f\in \sH\setminus\{0\}}\cfrac{|(f,h)|}{||A^*f||}<\infty.
\]
Hence, the equality $\ran \Gamma^\times   \cap \cH=\{0\}$ holds if and only if the operator $\Gamma$, as acting from $\cH$ into $\cE$, is \textit{strict singular}, i.e., $\dom \Gamma^*=\{0\}$ (see also \cite{Ota1987}). 
\subsubsection{The Green identity and boundary pairs}
Let $T$ be a maximal dissipative operator in the Hilbert space $\cH.$ It is proved in \cite{Straus1960} (see also \cite[Lemma 3.1]{BMNW2020}) that there exists a Hilbert space $\cE$ and a linear operator $\Gamma:\dom T\to\cE$ 
such that $\cran\Gamma =\cE$ and the Green (the Lagrange) identity
\begin{equation} \label{greens}
(Tf,g)-(f, Tg)=2i(\Gamma f, \Gamma g)_\cE\;\;\forall f,g\in\dom T
\end{equation}
holds. It follows that (see \cite[Lemma 3.3]{ArlCAOT2023})
$\Gamma\in\bB(\cH_T^+,\cE)$ and holds the equalities
\begin{equation}\label{vybvfz}
\cfrac{1}{2i}(T-T^\times)=\Gamma^\times    \Gamma,\; \ker \Gamma=\sS_T,\;\cran \Gamma^\times\cap \cH =\sS_T^\perp,
\end{equation}
where $\sS_T$ is the Hermitian domain of $T$ defined in \eqref{kerna}, $\Gamma^\times   \in\bB(\cE,\cH_T^-)$ is the adjoint to $\Gamma$, i.e., $(\Gamma f,e)_\cE=(f,\Gamma^\times e)$ for all $f\in\cH^+_T$ and for all $e\in\cE$ \cite{Ber}.

\begin{lemma}\cite[Lemma 3.3]{ArlCAOT2023}\label{ygkcvby}
If
\begin{equation}\label{yekm}
\ran \Gamma^\times\cap\cH=\{0\},
\end{equation}
then holds the equality
\[
\sS_T =\dom T\cap\dom T^*.
\]
\end{lemma}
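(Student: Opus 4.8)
The plan is to exploit the abstract machinery of the rigged Hilbert space $\cH_T^+\subset\cH\subset\cH_T^-$ together with the factorization $\tfrac{1}{2i}(T-T^\times)=\Gamma^\times\Gamma$ recorded in \eqref{vybvfz}. The inclusion $\sS_T\subseteq\dom T\cap\dom T^*$ is already contained in \eqref{kerna2}, so the only thing to prove is the reverse inclusion: if $f\in\dom T\cap\dom T^*$, then $f\in\sS_T=\ker\Gamma$. The key observation is that for such an $f$ the functional $g\mapsto (Tf,g)-(f,Tg)$ is, by the Green identity, equal to $2i(\Gamma f,\Gamma g)_\cE$; but on the other hand, since $f\in\dom T^*$, this same functional equals $(Tf,g)-(T^*f,g)=((T-T^*)f,g)$, which is bounded in the $\cH$-norm of $g$ on $\dom T$ (indeed, it equals $2i(\IM T\,f,g)$). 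Comparing, $\Gamma^\times\Gamma f$, a priori an element of $\cH_T^-$, is in fact represented by a vector of $\cH$.

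So the heart of the argument is: $f\in\dom T\cap\dom T^*$ forces $\Gamma^\times\Gamma f\in\cH$ (more precisely, $\Gamma^\times\Gamma f=\IM T\,f\in\cH$ as a functional on $\cH_T^+$). Then $\Gamma^\times\Gamma f\in\ran\Gamma^\times\cap\cH$, and hypothesis \eqref{yekm} gives $\Gamma^\times\Gamma f=0$. Since $\ker\Gamma^\times=\{0\}$ — this follows from $\cran\Gamma=\cE$, i.e.\ $\Gamma$ has dense range, so its adjoint $\Gamma^\times$ is injective — we conclude $\Gamma f=0$, that is, $f\in\ker\Gamma=\sS_T$. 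This closes the loop. One should double-check the claim $\Gamma^\times\Gamma f\in\cH$ carefully: it amounts to verifying that the linear functional $\varphi\mapsto(\Gamma f,\Gamma\varphi)_\cE=(\Gamma^\times\Gamma f,\varphi)$ on $\cH_T^+$ is $\|\cdot\|$-bounded, which follows from $(\Gamma^\times\Gamma f,\varphi)=\tfrac{1}{2i}((T-T^\times)f,\varphi)=\tfrac{1}{2i}((T-T^*)f,\varphi)_\cH$ for $f\in\dom T\cap\dom T^*$, using that $T^\times$ extends $T^*$.

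The main obstacle — really the only subtle point — is the interchange of the two representations of the sesquilinear form at the level of the rigging: one must be sure that $T^\times f$ coincides with $T^*f\in\cH$ precisely when $f\in\dom T^*$, so that the $\cH_T^-$-valued expression $\tfrac{1}{2i}(T-T^\times)f$ is actually a genuine $\cH$-vector rather than merely a bounded functional after pairing. This is exactly the content of the sentence in the excerpt that "$X^\times$ is the continuation of $X^*$ onto $\cH$," applied with $X=T$: for $f\in\dom T^*$ we have $T^\times f=T^*f\in\cH\subset\cH_T^-$. Granting that, the rest is the short chain above. I would therefore structure the proof as three lines: (i) reduce to showing $\dom T\cap\dom T^*\subseteq\sS_T$; (ii) for $f$ in that intersection show $\Gamma^\times\Gamma f=\tfrac{1}{2i}(T-T^*)f\in\ran\Gamma^\times\cap\cH=\{0\}$ by \eqref{yekm}; (iii) invoke injectivity of $\Gamma^\times$ (from $\cran\Gamma=\cE$) to get $\Gamma f=0$, hence $f\in\sS_T$.
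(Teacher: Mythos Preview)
Your proof is correct. The paper itself does not give a proof of this lemma; it is cited directly from \cite[Lemma 3.3]{ArlCAOT2023}. Your argument---reducing to the inclusion $\dom T\cap\dom T^*\subseteq\sS_T$, using that $T^\times f=T^*f$ for $f\in\dom T^*$ so that $\Gamma^\times\Gamma f=\tfrac{1}{2i}(T-T^*)f\in\ran\Gamma^\times\cap\cH=\{0\}$, and then invoking the injectivity of $\Gamma^\times$ (from $\cran\Gamma=\cE$)---is exactly the natural route given the identities \eqref{vybvfz} recorded in the paper, and every step is sound. This is almost certainly the argument of the cited reference as well, since the factorization $\tfrac{1}{2i}(T-T^\times)=\Gamma^\times\Gamma$ leaves essentially no other path.
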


The Hilbert space $\cE$ and the operator $\Gamma$ are called the \textit{boundary space} and the \textit{boundary operator}, respectively, associated with $T$, see, e.g. \cite{Straus1960}. We will call $\{\cE,\Gamma\}$ a \textit{boundary pair} for $T$.

Observe that from \eqref{greens} it follows the equality
\begin{equation} \label{ghtct}
\IM (Tf,f)=||\Gamma f||_{\cE}^2,\; f\in\dom T
\end{equation}
and from \eqref{ghtct} one gets that $\sS_T$ is dense in $\cH$ if and only if $\cran \Gamma^\times\cap\cH=\{0\}$;

Due to \eqref{shmuly} 
the condition \eqref{yekm} is equivalent to the condition
\[
\sup\limits_{f\in \dom T\setminus\ker\sS_T}\cfrac{|(f,h)|^2}{\IM (Tf,f)}=+\infty\;\;\forall h\in\cH.
\]
\begin{remark}\label{vjtyjdp}
(1) There exist a Hilbert space $\cE_*$ and $\Gamma_*\in\bB(\cH_{T^*}^+,\cE_*)$ such that $\cran\Gamma_*=\cE_*$ and
\[
(T^*u,v)-(u, T^*v)=-2i(\Gamma_*
u, \Gamma_* v)_{\cE_*}\;\;\forall u,v\in\dom T^*.
\]
(2) From \eqref{theccayl} it follows that one can choose
\[
\begin{array}{l}
\cE_\lambda:=\sD_{Y_\lambda},\; \Gamma_\lambda f:=\cfrac{1}{2\sqrt{\IM\lambda}}\,D_{Y_\lambda}(T-\bar\lambda I)f,\; f\in\dom T=\cH_T^+,\\
\cE_{*\lambda}=\sD_{Y^*_{\lambda}},\; \Gamma_{*\lambda}h:=-\cfrac{1}{2\sqrt{\IM\lambda}}\,D_{Y^*_\lambda}(T^*-\lambda I)h,\; h\in\dom T^*=\cH_{T^*}^+,
\end{array}
\]
where $\lambda\in\dC_+$ and $Y_\lambda $ is the Cayley transform of $T$ given by \eqref{rtkbnh}.
Moreover, if $\{\cE,\Gamma\}$ ($\{\cE_*,\Gamma_*\}$) is a boundary pair associated with $T$ (with $T^*$, respectively), then $\Gamma =U_\lambda\Gamma_\lambda,$
($\Gamma_* =U_{*\lambda}\Gamma_{*\lambda},)$
where $U_\lambda\in\bB(\cE_\lambda,\cE)$  ($U_*\in\bB(\cE_{*\lambda},\cE_*))$
is a unitary operator.

Therefore
\begin{equation}\label{ufvvfl}
\left\{\begin{array}{l}
\Gamma_\lambda^\times h=\cfrac{1}{2\sqrt{\IM\lambda}}(T^\times-\lambda I)D_{Y_\lambda}h,\; h\in\sD_{Y_\lambda},\\
\Gamma_{*\lambda}^\times f=-\cfrac{1}{2\sqrt{\IM\lambda}}((T^*)^\times-\bar\lambda I)D_{Y^*_\lambda}f,\; f\in\sD_{Y^*_\lambda}.
\end{array}\right.
\end{equation}
It follows that
$$(T^\times -\lambda I)^{-1}\ran\Gamma_\lambda^\times=\ran D_{Y_\lambda}=\ran D_{M_\lambda}\subseteq\sN_\lambda\subset\dom S^*,$$
where $S=T\uphar\sS_T$ is the Hermitian part of $T$ and $S^*:\cH\to\overline{\sS_T}$ is the adjoint to $S$.
Hence
\[
\begin{array}{l}
\ran \Gamma^\times_\lambda\cap\cH=\{0\}\Longleftrightarrow\dom T^*\cap \ran D_{Y_\lambda} =\{0\},\\
\ran \Gamma^\times_{*\lambda}\cap\cH=\{0\}\Longleftrightarrow\dom T\cap \ran D_{Y^*_\lambda}=\{0\}.
\end{array}
\]
 (3) From the definition of a boundary operator it follows that
 \[
 \IM (T^*-i I)^{-1}=\cfrac{1}{2i}\left((T^* -iI)^{-1}-(T +iI)^{-1}\right)=(T^\times -iI)^{-1}(\Gamma^\times \Gamma +I)(T+i I)^{-1}.
 \]
 Then \eqref{ctyn29a} yields the equality
 \[
\cL_T=\ran\left((T^\times -iI)^{-1}(\Gamma^\times \Gamma +I)(T+i I)^{-1}\right)^\half.
 \]
\end{remark}

\section{Maximal dissipative operators of the class $\bD_{{\rm{sing}}}$}
We introduce here a class of maximal dissipative operators, which plays a key role in our further constructions. In fact this class has been already partly studied in our paper \cite{ArlCAOT2023}.

\begin{definition} \cite[Definition 5.1]{Kosh}
A nonnegative quadratic form $\mathfrak{a}$ in a Hilbert space is called singular if for any $\f\in\dom \mathfrak{a}$ there exists a sequence $\{\f_n\}\subset\dom \mathfrak{a}$ such that
$$\lim\limits_{n\to\infty}\f_n=0\quad\mbox{and}\quad  \lim\limits_{n\to\infty}\mathfrak{a}[\f-\f_n]=0.$$
\end{definition}

The equivalent definition: a nonnegative quadratic form $\mathfrak{a}$ in a Hilbert space is singular if for any $\f\in\dom \mathfrak{a}$ there exists a sequence $\{\psi_n\}\subset\dom \mathfrak{a}$ such that
$$\lim\limits_{n\to\infty}\psi_n=\f\quad\mbox{and}\quad  \lim\limits_{n\to\infty}\mathfrak{a}[\psi_n]=0.$$

In particular, \textit{if $\ker\mathfrak{a}$ is dense, then $\mathfrak{a}$ is singular}. Actually, for any $f$ in the Hilbert space, there exists a sequence $\{\psi_n\}\subset\ker \mathfrak{a}$ such that $\lim\limits_{n\to\infty}\psi_n=f$, but $\mathfrak{a}[\psi_n]=0$ for all $n\in\dN.$

 According
 \cite[Theorem 2]{Ota1987} if $\sA$ is a linear operator between Hilbert spaces with dense domain and dense range, then the quadratic form $\mathfrak{a}[\f]:=||\sA\f||^2,$
$\dom\mathfrak{a}=\dom \sA$ is singular if and only if $\dom \sA^*=\{0\}$.

\begin{definition}\label{nov10a}
A maximal dissipative operator $T$ for which the quadratic form
\[
\gamma_T[f]:=\IM (Tf,f),\; f\in\dom\gamma_T=\dom T
\]
is singular
 will be assigned to the class $\bD_{{\rm{sing}}}$.
\end{definition}

\begin{proposition} \label{zghbl} \cite[Proposition 3.5]{ArlCAOT2023}.
Let $T$ be a maximal dissipative operator in the Hilbert space $\cH$ and let $Y_\lambda=(T-\lambda I)(T-\bar\lambda I)^{-1}$ be the Cayley transform of $T$ ($\lambda\in\dC_+$).
Let $\left\{\cE,\Gamma\right\}$ and
$\left\{\cE_*,\Gamma_*\right\}$ be boundary pairs associated with $T$ and $T^*$, respectively.
Then the following are equivalent:
\begin{enumerate}
\def\labelenumi{\rm (\roman{enumi})}
\item  $T\in\bD_{{\rm{sing}}}$;
\item $\dom\Gamma^*=\{0\}$;
\item 
$\ran \Gamma^\times\cap \cH=\{0\}$;
\item $\ran D_{Y_\lambda}\cap\dom T^*
=\{0\};$
\item $-T^*\in\bD_{{\rm{sing}}};$
\item $\dom \Gamma^*_*=\{0\}$;
\item $\ran \Gamma^\times_*\cap\cH=\{0\}$;
\item $\ran D_{Y_\lambda^*}\cap\dom T
=\{0\}.$
\end{enumerate}
\end{proposition}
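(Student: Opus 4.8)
The plan is to establish the cycle of implications by translating "singularity of $\gamma_T$" through the various descriptions accumulated in Section \ref{disoperca}, so that all eight conditions become reformulations of a single fact about the boundary operator $\Gamma$. First I would use the Green identity \eqref{greens} in the form \eqref{ghtct}, namely $\IM(Tf,f)=\|\Gamma f\|_\cE^2$, so that $\gamma_T[f]=\|\Gamma f\|^2_\cE$ with $\dom\gamma_T=\dom T=\cH_T^+$. Since $\cran\Gamma=\cE$ and, by \cite[Lemma 3.3]{ArlCAOT2023}, $\Gamma\in\bB(\cH_T^+,\cE)$ with $\ker\Gamma=\sS_T$, the operator $\Gamma$ acting from $\cH$ into $\cE$ (with domain $\dom\Gamma=\dom T$) has dense range; invoking \cite[Theorem 2]{Ota1987} quoted just before Definition \ref{nov10a}, the form $\mathfrak a[\f]=\|\Gamma\f\|^2$ is singular if and only if $\dom\Gamma^*=\{0\}$. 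That gives (i) $\Longleftrightarrow$ (ii) directly. Next, by the criterion of \v{S}mul'jan recalled after \eqref{shmuly}, $\dom\Gamma^*=\{0\}$ is equivalent to $\ran\Gamma^\times\cap\cH=\{0\}$, which is (iii); this is precisely the content of the $\sup$-reformulation displayed after Lemma \ref{ygkcvby}.

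Then I would pass to the Cayley picture. By part (2) of Remark \ref{vjtyjdp} one may take $\cE=\cE_\lambda=\sD_{Y_\lambda}$ and $\Gamma=U_\lambda\Gamma_\lambda$ with $U_\lambda$ unitary, so $\ran\Gamma^\times_\lambda=\ran U_\lambda^*\Gamma^\times$ and hence $\ran\Gamma^\times\cap\cH=\{0\}\Longleftrightarrow\ran\Gamma^\times_\lambda\cap\cH=\{0\}$. The identity \eqref{ufvvfl} shows $(T^\times-\lambda I)^{-1}\ran\Gamma^\times_\lambda=\ran D_{Y_\lambda}$, and since $(T^\times-\lambda I)^{-1}$ maps $\cH_T^-$ bijectively onto $\cH$ (Subsection \ref{nov17a}) and carries $\cH$ into $\dom T^*$, one gets $\ran\Gamma^\times_\lambda\cap\cH=\{0\}\Longleftrightarrow\ran D_{Y_\lambda}\cap\dom T^*=\{0\}$, which is exactly the chain of equivalences displayed at the end of Remark \ref{vjtyjdp}(2). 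This settles (iii) $\Longleftrightarrow$ (iv). Independence of $\lambda\in\dC_+$ is automatic here, but it also follows from Proposition \ref{yjdmt}(3).

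For the "starred" half (v)--(viii), I would run the mirror argument: $-T^*$ is maximal dissipative (as recalled in the Introduction), its boundary pair is $\{\cE_*,\Gamma_*\}$ by Remark \ref{vjtyjdp}(1), and the very same three steps give (v) $\Longleftrightarrow$ (vi) $\Longleftrightarrow$ (vii) $\Longleftrightarrow$ (viii), now with $Y_\lambda^*$, $D_{Y_\lambda^*}$, and $\dom T$ in place of their un-starred counterparts; here one uses the second line of \eqref{ufvvfl} and the second equivalence displayed at the end of Remark \ref{vjtyjdp}(2). It remains to bridge the two halves, i.e.\ to prove (iv) $\Longleftrightarrow$ (viii), equivalently $\ran D_{Y_\lambda}\cap\dom T^*=\{0\}\Longleftrightarrow\ran D_{Y_\lambda^*}\cap\dom T=\{0\}$. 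But this is precisely Proposition \ref{yjdmt}(2) applied to $T$ (using $\dom T=\ran(I-Y_\lambda)$ and $\dom T^*=\ran(I-Y_\lambda^*)$ from \eqref{thecayl}), or equivalently Proposition \ref{xnjnj}(2) read through \eqref{defoperl}. With that link in place all eight statements are equivalent, closing the argument.

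The main obstacle is bookkeeping rather than any single hard estimate: one must keep straight which operator's rigged space a given functional lives in, and in particular verify carefully that conjugating $\Gamma$ by the unitary $U_\lambda$ and by the boundedly invertible resolvent $(T^\times-\lambda I)^{-1}$ genuinely preserves the property "range meets $\cH$ only in $0$" — this rests on the mapping properties of $(T^\times-\lambda I)^{-1}:\cH_T^-\to\cH$ and of $(T^\times-\lambda I)^{-1}\uphar\cH:\cH\to\dom T^*$ established in Subsection \ref{nov17a}. The symmetric bridge (iv) $\Leftrightarrow$ (viii), if one does not simply cite Proposition \ref{yjdmt}(2), is the only place a genuine operator identity (the intertwining $Y_\lambda^*D_{Y_\lambda^*}=D_{Y_\lambda}Y_\lambda^*$) must be exploited, exactly as in the proof of Proposition \ref{yjdmt}.
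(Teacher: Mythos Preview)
Your proof is correct. The paper does not give its own argument for Proposition~\ref{zghbl}; it simply cites \cite[Proposition~3.5]{ArlCAOT2023}. What you have done is assemble a self-contained proof out of the material the paper develops in Section~\ref{disoperca}: Ota's criterion for singularity of $\|\Gamma\cdot\|^2$, the \v{S}mul'jan range criterion \eqref{shmuly} for $\dom\Gamma^*=\{0\}\Leftrightarrow\ran\Gamma^\times\cap\cH=\{0\}$, the explicit boundary pair $\{\sD_{Y_\lambda},\Gamma_\lambda\}$ of Remark~\ref{vjtyjdp}(2) together with \eqref{ufvvfl}, and Proposition~\ref{yjdmt}(2) for the bridge (iv)$\Leftrightarrow$(viii). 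All steps are sound; the one cosmetic slip is the expression ``$\ran\Gamma_\lambda^\times=\ran U_\lambda^*\Gamma^\times$'': since $\Gamma=U_\lambda\Gamma_\lambda$ with $U_\lambda$ unitary, the correct relation is $\Gamma^\times=\Gamma_\lambda^\times U_\lambda^*$, whence $\ran\Gamma^\times=\ran\Gamma_\lambda^\times$, which is what you actually use.
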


From Proposition \ref{yjdmt}, Proposition \ref{zghbl} and equalities \eqref{ctyn29a} and \eqref{orn11} we arrive at the following statement.
\begin{corollary}\label{octob9a}
 The following are equivalent:
\begin{enumerate}
\def\labelenumi{\rm (\roman{enumi})}
\item $T\in \bD_{{\rm{sing}}}$;
\item at least for one (then for all) $\lambda\in\dC_+$  the linear manifold $\cL_T$ admits the direct decompositions
\[
\cL_T= \dom T^*\dot+\ran D_{Y_\lambda}=\dom T
\dot + \ran D_{Y^*_\lambda}.
\]
\end{enumerate}
\end{corollary}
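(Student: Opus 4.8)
The statement to prove is Corollary \ref{octob9a}, which combines Proposition \ref{yjdmt}, Proposition \ref{zghbl}, and equalities \eqref{ctyn29a}, \eqref{orn11}.

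The plan is to derive the equivalence essentially by assembling the three ingredients. First I would recall from Proposition \ref{yjdmt}(1) together with \eqref{ctyn29a} and \eqref{orn11} that, for every $\lambda\in\dC_+$, the linear manifold $\cL_T$ always admits the (not necessarily direct) decompositions
\[
\cL_T=\dom T^*+\ran D_{Y_\lambda}=\dom T+\ran D_{Y^*_\lambda}.
\]
So the only extra content in statement (ii) is that these sums are \emph{direct}, i.e. that $\dom T^*\cap\ran D_{Y_\lambda}=\{0\}$ and $\dom T\cap\ran D_{Y^*_\lambda}=\{0\}$.

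Next I would invoke Proposition \ref{zghbl}: the conditions $\ran D_{Y_\lambda}\cap\dom T^*=\{0\}$ (item (iv) there) and $\ran D_{Y^*_\lambda}\cap\dom T=\{0\}$ (item (viii) there) are each equivalent to $T\in\bD_{\mathrm{sing}}$. Combining with the previous paragraph, $T\in\bD_{\mathrm{sing}}$ forces both sums to be direct, giving (i)$\Rightarrow$(ii); and conversely, if for \emph{one} $\lambda\in\dC_+$ the decomposition $\cL_T=\dom T^*\dot+\ran D_{Y_\lambda}$ holds (directness in particular), then $\dom T^*\cap\ran D_{Y_\lambda}=\{0\}$, which is exactly item (iv) of Proposition \ref{zghbl}, hence $T\in\bD_{\mathrm{sing}}$. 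This yields (ii)$\Rightarrow$(i). The ``for one (then for all)'' clause is already built into Proposition \ref{zghbl}, since all of its conditions are $\lambda$-independent, but one can also read it off from Proposition \ref{yjdmt}(3).

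I do not expect any serious obstacle here: the corollary is purely a matter of bookkeeping, checking that the intersection-triviality conditions appearing in Proposition \ref{yjdmt}(2)--(3) coincide with items (iv) and (viii) of Proposition \ref{zghbl}, and that the ambient decompositions of $\cL_T$ from \eqref{orn11} are exactly the ones appearing in statement (ii). The one point worth stating carefully is that the \emph{sum} decompositions $\cL_T=\dom T^*+\ran D_{Y_\lambda}$ hold unconditionally (from \eqref{orn11}), so that passing to $T\in\bD_{\mathrm{sing}}$ only upgrades ``$+$'' to ``$\dot+$'' and nothing else needs to be re-proved.
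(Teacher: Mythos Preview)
Your proposal is correct and matches the paper's own treatment exactly: the paper does not give a separate proof but simply states that the corollary follows from Proposition \ref{yjdmt}, Proposition \ref{zghbl}, and equalities \eqref{ctyn29a}, \eqref{orn11}, which is precisely the assembly you describe. Your observation that the sums in \eqref{orn11} hold unconditionally and that $T\in\bD_{\rm sing}$ merely upgrades ``$+$'' to ``$\dot+$'' via items (iv) and (viii) of Proposition \ref{zghbl} is the right way to spell out the argument.
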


\begin{theorem} \label{xfcnyck}
Let $T$ be a maximal dissipative operator in the Hilbert space $\cH$, let the linear manifold $\sS_T$ be defined by \eqref{kerna} and let
$Y_\lambda=(T-\lambda I)(T-\bar\lambda I)^{-1}$ be the Cayley transform of $T$ ($\IM\lambda>0$).

(1) Suppose that $\sS_T\ne\{0\}$ and set $\sL=\cH\ominus\sS_T.$ Then the following are equivalent:
\begin{enumerate}
\def\labelenumi{\rm (\roman{enumi})}
\item $T\in\bD_{{\rm{sing}}}$;

\item holds the equality
\[
\ran  D_{M_\lambda ^*}\cap (P_{\sN_{\bar\lambda}}-M_\lambda P_{\sN_\lambda})\sL=\{0\};
\]
\item holds the equality
\[
\ran  D_{M_\lambda }\cap (P_{\sN_{\lambda}}-M_\lambda ^* P_{\sN_{\bar\lambda}})\sL=\{0\}.
\]
where $\sN_\mu=\left((T-\bar \mu I)\sS_T\right)^\perp$ ($\IM \mu\ne 0$) is the deficiency subspace of the Hermitian part of $T$ (given by \eqref{heroper}) and $M_\lambda=Y_\lambda\uphar\sN_\lambda.$
\end{enumerate}

In particular, the following statements hold true
\begin{enumerate}
\def\labelenumi{\rm (\roman{enumi})}
\item [{\rm (a)}]
 if $\sS_T$ is dense in $\cH$, then $T\in\bD_{\rm{sing}}$;
\item [{\rm (b)}] if $\sS_T\ne\{0\}$, $\sS_T$ is non-dense in $\cH$, and $||M_\lambda||<1$ for some $\lambda$, $\IM \lambda>0$ (this is valid, for instance, if $\dim\sN_\lambda<\infty$), then $T\notin \bD_{\rm sing}$.

\end{enumerate}
(2) If $T\in\bD_{\rm{sing}}$, then holds the equality
\[
\sS_T =\dom T\cap\dom T^*.
\]
\end{theorem}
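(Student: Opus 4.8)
\textbf{Proof plan for Theorem~\ref{xfcnyck}.}
The plan is to prove the three equivalences in part (1) by translating everything to the Cayley transform side, using the orthogonal decomposition $Y_\lambda=U_\lambda\oplus M_\lambda$ of \eqref{jgthvl} which is available because $\sS_T=\dom S$ by Theorem~\ref{dljgec}, and then deducing (a), (b) and part (2) as corollaries. First I would record that, under the hypothesis $\sS_T\ne\{0\}$ with $S=T\uphar\sS_T$ the Hermitian part, the maximal dissipative $T$ is of the form \eqref{ljghfci} for a contraction $M_\lambda\in\bB(\sN_\lambda,\sN_{\bar\lambda})$ satisfying the strict-contractivity condition \eqref{dfyn}; consequently \eqref{defoperl} gives $D^2_{Y_\lambda}=D^2_{M_\lambda}P_{\sN_\lambda}$ and $D^2_{Y^*_\lambda}=D^2_{M^*_\lambda}P_{\sN_{\bar\lambda}}$, so that $\ran D_{Y_\lambda}=\ran D_{M_\lambda}\subseteq\sN_\lambda$ and $\ran D_{Y^*_\lambda}=\ran D_{M^*_\lambda}\subseteq\sN_{\bar\lambda}$.

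The core computation is then as follows. By Proposition~\ref{zghbl}, $T\in\bD_{\rm sing}$ is equivalent to $\ran D_{Y_\lambda}\cap\dom T^*=\{0\}$ (equivalently $\ran D_{Y^*_\lambda}\cap\dom T=\{0\}$). Since $\ran D_{Y_\lambda}\subseteq\sN_\lambda$, the intersection $\ran D_{Y_\lambda}\cap\dom T^*$ equals $\ran D_{Y_\lambda}\cap(\dom T^*\cap\sN_\lambda)$; and by Remark~\ref{dctulf} one has $\dom T^*\cap\sN_\lambda=(P_{\sN_\lambda}-M^*_\lambda P_{\sN_{\bar\lambda}})\sL$. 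Substituting $\ran D_{Y_\lambda}=\ran D_{M_\lambda}$ yields exactly condition (iii) of the theorem; the symmetric manipulation with $\ran D_{Y^*_\lambda}\cap\dom T$ and the second formula of Remark~\ref{dctulf} yields condition (ii). Thus (i)$\Leftrightarrow$(ii)$\Leftrightarrow$(iii) follows by bookkeeping, once one checks that $\sN_\mu=((T-\bar\mu I)\sS_T)^\perp$ indeed coincides with the deficiency subspace $\sM_{\bar\mu}^\perp$ of $S$, which is immediate from $(S-\bar\mu I)\sS_T=(T-\bar\mu I)\sS_T$ and $\sM_{\bar\mu}=\ran(S-\bar\mu I)$.

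For statement (a): if $\sS_T$ is dense, then $\sL=\{0\}$, so the right-hand side of (ii) (and (iii)) is the zero subspace and $T\in\bD_{\rm sing}$ trivially; alternatively this is Proposition~\ref{xnjnj}(2). For statement (b): if $\|M_\lambda\|<1$ then $D_{M_\lambda}$ is boundedly invertible, so $\ran D_{M_\lambda}=\sN_\lambda$; hence $\ran D_{Y_\lambda}\cap\dom T^*=\sN_\lambda\cap\dom T^*=(P_{\sN_\lambda}-M^*_\lambda P_{\sN_{\bar\lambda}})\sL$, which is nonzero whenever $\sL\ne\{0\}$ because $P_{\sN_\lambda}\uphar\sL$ is injective (by \eqref{gthtctx1}, $P_{\sN_\lambda}h=0$ with $h\in\sL$ forces $h\in\dom S\cap\sL=\{0\}$ since $\cdom S\ne\cH$); therefore $T\notin\bD_{\rm sing}$. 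The remark that $\dim\sN_\lambda<\infty$ forces $\|M_\lambda\|<1$ is because a finite-dimensional contraction satisfying the strict inequality \eqref{dfyn} on every nonzero vector has operator norm $<1$. Finally, part (2) is exactly Lemma~\ref{ygkcvby} combined with Proposition~\ref{zghbl}: $T\in\bD_{\rm sing}$ gives $\ran\Gamma^\times\cap\cH=\{0\}$, i.e.\ condition \eqref{yekm}, whence $\sS_T=\dom T\cap\dom T^*$.

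The main obstacle I anticipate is purely notational rather than conceptual: one must be careful that in the possibly non-densely-defined setting the adjoint $S^*$ is the linear relation described via \eqref{gthtctx1}, and that the formulas of Remark~\ref{dctulf} for $\dom T\cap\sN_{\bar\lambda}$ and $\dom T^*\cap\sN_\lambda$ were derived precisely under the standing hypothesis $\sL=(\dom S)^\perp$, so invoking them here is legitimate. The only genuinely nontrivial point is verifying the injectivity of $P_{\sN_\lambda}\uphar\sL$ used in (b), but this is supplied by Krasnosel'ski\u\i's result $\sL\cap\sM_\lambda=\{0\}$ quoted after Theorem~\ref{ivekmzy} together with \eqref{gthtctx1}.
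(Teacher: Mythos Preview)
Your proof is correct and follows essentially the same route as the paper: reduce membership in $\bD_{\rm sing}$ to the condition $\ran D_{Y_\lambda^*}\cap\dom T=\{0\}$ (resp.\ $\ran D_{Y_\lambda}\cap\dom T^*=\{0\}$) via Proposition~\ref{zghbl}, then exploit the block decomposition \eqref{jgthvl}--\eqref{defoperl} and the description \eqref{gthtctx1} of the non-dense case. Your use of Remark~\ref{dctulf} is a clean shortcut for what the paper re-derives by hand via equation \eqref{gthtctx2}.

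One small gap in your argument for (b): from injectivity of $P_{\sN_\lambda}\uphar\sL$ alone you cannot conclude that $(P_{\sN_\lambda}-M^*_\lambda P_{\sN_{\bar\lambda}})\uphar\sL$ is injective. You also need the hypothesis $\|M_\lambda\|<1$ together with the isometry $V_\lambda$ of \eqref{forbis}: if $(P_{\sN_\lambda}-M^*_\lambda P_{\sN_{\bar\lambda}})h=0$ for $h\in\sL$, then $P_{\sN_\lambda}h=M^*_\lambda V_\lambda P_{\sN_\lambda}h$, and since $\|M^*_\lambda V_\lambda\|<1$ this forces $P_{\sN_\lambda}h=0$, whence $h=0$. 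With this line added the argument is complete.
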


\begin{proof}
 (1) Let $S$ be given by \eqref{heroper}. Then $T$ can be described by \eqref{ljghfci}, where $M_\lambda$ is defined by \eqref{jgthvl}. We will use Proposition \ref{zghbl}.

From \eqref{defoperl} the inclusion $f\in\ran (I-Y_\lambda)\cap\ran D_{Y_\lambda ^*} $   can be rewritten as follows
\begin{equation} \label{gthtctx2}
f= D_{M_\lambda ^*}\f_{\bar\lambda}=f_S+(I-M_\lambda)\psi_\lambda, \; f_S\in\dom S,\;\f_{\bar\lambda}\in\sN_{\bar \lambda},\;\psi_\lambda\in\sN_\lambda.
\end{equation}

(a) Suppose that $\sS_T=\dom S$ is dense. Then because $\ker D_{M^*_\lambda}=\{0\}$ one gets
\[
f_S=0,\; \psi_\lambda=0 ,\;\f_{\bar \lambda}=0.
\]
It follows that
$$\ran (I-Y_\lambda)\cap\ran D_{Y_\lambda ^*}=\{0\}.$$

(b) Suppose $\sS_T\ne\{0\}$, $\sS_T$ is non-dense. Then, using \eqref{gthtctx1}, from \eqref{gthtctx2} we get
\[
\psi_\lambda=P_{\sN_\lambda}h,\;  D_{M_\lambda ^*}\f_{\bar\lambda}+M_\lambda\psi_\lambda=P_{\sN_{\bar\lambda}}h,\;
f_S=2i\IM \lambda(S-\lambda I)^{-1}P_{\sM_\lambda}h,\; h\in\sL.
\]
Thus,
$$\ran (I-Y_\lambda)\cap\ran D_{Y_\lambda ^*} =\{0\}\Longleftrightarrow
\ran  D_{Y_\lambda ^*}\cap (P_{\sN_{\bar\lambda}}-Y_\lambda P_{\sN_\lambda})\sL=\{0\}.$$

If $||M_\lambda||<1,$ then $\ran  D_{M_\lambda ^*}=\sN_{\bar\lambda}$.
Hence
\[
 D_{M_\lambda ^*}\f_{\bar\lambda}=P_{\sN_{\bar\lambda}}h-M_\lambda P_{\sN_\lambda}h,\;
\f_{\bar\lambda}= D_{M_\lambda ^*}^{-1}(P_{\sN_{\bar\lambda}}h-M_\lambda P_{\sN_\lambda}h)
\]



(2) Consider the corresponding rigged Hilbert space $\cH_T^+\subset\cH\subset\cH_T^-$.
By Lemma \ref{ygkcvby} there exists a Hilbert space $\cE$ and $\Gamma\in\bB(\cH_T^+,\cE)$ such that 
$
\cfrac{1}{2i}(T-T^\times)=\Gamma^\times \Gamma,
$
where $\Gamma^\times\in\bB(\cE,\cH_T^-)$ is the adjoint to $\Gamma$.

By Proposition \ref{zghbl} $T\in \bD_{\rm{sing}}$ $\Longrightarrow$ 
$\ran \Gamma^\times\cap\cH=\{0\}$. Then Lemma \ref{ygkcvby} yields the equality $\sS_T=\dom T\cap\dom T^*.$
\end{proof}

\begin{corollary}\label{btcrjyx}
Let $T$ be a maximal dissipative operator in the Hilbert space $\cH$. If  $T\in\bD_{{\rm{sing}}}$ and $\sS_T$ is non-trivial and non-dense, then the deficiency indices of the Hermitian part of $T$ are infinite.
\end{corollary}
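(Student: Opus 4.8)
The plan is to obtain the corollary by applying the contrapositive of Theorem~\ref{xfcnyck}(b) twice: once to $T$ itself, to control one deficiency index of the Hermitian part $S=T\uphar\sS_T$, and once to the maximal dissipative operator $-T^*$, to control the other, exploiting that $-T^*$ shares the Hermitian domain $\sS_T$ with $T$ while its Hermitian part is $-S$, which interchanges the two defect numbers.

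First I would fix $\lambda\in\dC_+$ and argue by contradiction: suppose $n_+=\dim\sN_\lambda<\infty$, where $\sN_\lambda=\left((T-\bar\lambda I)\sS_T\right)^\perp$ is the deficiency subspace of $S$ as in Theorem~\ref{xfcnyck}. Since $\sS_T$ is non-trivial and non-dense and $\dim\sN_\lambda<\infty$, the parenthetical assertion (b) of Theorem~\ref{xfcnyck} forces $T\notin\bD_{\rm sing}$, contradicting the hypothesis. If one prefers to unwind this, the admissibility inequality \eqref{dfyn}, namely $\|M_\lambda\f\|<\|\f\|$ for every $\f\in\sN_\lambda\setminus\{0\}$, upgrades to $\|M_\lambda\|<1$ by compactness of the unit sphere of the finite-dimensional space $\sN_\lambda$, and then Theorem~\ref{xfcnyck}(b) applies. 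Hence $n_+=\infty$.

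Next I would pass to $-T^*$. By the facts recalled in the Introduction it is again maximal dissipative, and by Proposition~\ref{zghbl} it belongs to $\bD_{\rm sing}$; its Hermitian domain is $\sS_{-T^*}=\{f\in\dom T^*:\IM(T^*f,f)=0\}=\sS_{T^*}=\sS_T$ by \eqref{kerna2}, hence again non-trivial and non-dense, while on $\sS_T$ one has $(-T^*)f=-Tf=-Sf$, so its Hermitian part is $-S$. Applying the previous step to $-T^*$ gives that the deficiency subspace of $-S$ associated with $\dC_+$ is infinite-dimensional; since the deficiency subspaces of $-S$ are exactly those of $S$ with the two half-planes interchanged (indeed $\ran((-S)-\bar\lambda I)=\ran(S-(-\bar\lambda)I)$, so $\sN_\lambda(-S)=\sN_{-\lambda}(S)$ with $-\lambda\in\dC_-$), we conclude $n_-=\infty$ as well, which completes the proof.

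I do not expect a genuine obstacle once Theorem~\ref{xfcnyck} is in hand; the only points requiring care are the bookkeeping that the Hermitian part of $-T^*$ is precisely $-S$ (so that its defect numbers are $n_-$ and $n_+$, not $n_+$ and $n_-$) and, if the argument is written out in full, the elementary compactness passage from the pointwise strict contractivity \eqref{dfyn} to the operator-norm bound $\|M_\lambda\|<1$ in finite dimensions.
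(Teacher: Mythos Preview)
Your proof is correct. The paper states the corollary without proof, as an immediate consequence of Theorem~\ref{xfcnyck}(b); your argument makes this deduction explicit, and the bookkeeping for the Hermitian part of $-T^*$ and the interchange of defect numbers is done correctly.

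One remark on the route you chose for $n_-$: instead of passing to $-T^*$, one can stay inside Theorem~\ref{xfcnyck}(b) directly. If $\dim\sN_{\bar\lambda}<\infty$, then $M_\lambda:\sN_\lambda\to\sN_{\bar\lambda}$ has finite rank, hence $M_\lambda^*M_\lambda$ is a compact nonnegative contraction on $\sN_\lambda$; condition~\eqref{dfyn} says $\ker(I-M_\lambda^*M_\lambda)=\{0\}$, so $1$ is not in its (finite) spectrum, and therefore $\|M_\lambda\|<1$. Thus (b) applies again, yielding $T\notin\bD_{\rm sing}$. This is presumably closer to what the paper has in mind, since (b) is phrased in terms of the single hypothesis $\|M_\lambda\|<1$; your detour through $-T^*$ and Proposition~\ref{zghbl}(v) is equally valid and has the virtue of making the symmetry between the two indices transparent.
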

\section{The Shtraus extensions of symmetric operators and their matrix representations} \label{nov3c} 
Let $S$ be a closed symmetric operator in the Hilbert space $\sH$, let
 $z\in \wh\rho(S)$ and let $\wt S_z$ be the Shtraus extension of $S$, see \eqref{slamb}.
Since
\[
\left\{\begin{array}{l}
\IM (\wt S_z(f_S+\f_z), f_S+\f_z)=\IM z ||\f_z||^2,\\[2mm]
\wt S_z(f_S+\f_z)-\bar z(f_S+\f_z)=(S-\bar z I)f_S+2i\IM z\, \f_z,\; f_S\in\dom S,\; \f_z\in\sN_z,
\end{array}\right.
\]
the operator $\wt S_z$ is maximal dissipative for $ z\in\dC_+$, maximal accumulative for $z\in\dC_-$, and selfadjoint for $z\in\wh\rho(S)\cap\dR$.
By Theorem \ref{dljgec} for the operator $M_z:\sN_z\to\sN_{\bar z}$ one has $M_z=0$. Hence $\wt S_z^*=\wt S_{\bar z}$ and from \eqref{rhfc11}
\[
\dom\wt S_z\cap\dom\wt S_z^*=\dom S\dot+P_{\sN_z}(\dom S)^\perp=\dom S\dot+P_{\sN_{\bar z}}(\dom S)^\perp,\; \sS_{\wt S_z}=\dom S,\; z\in\dC_+.
\]

Let $\wt Y_z=(\wt S_z-z I)(\wt S_z-\bar z I)^{-1}$ be the Cayley transform of $\wt S_z$. Then $\wt Y_z$ is the partial isometry of the form
$ 
\wt Y_z=(S-z I)(S-\bar z I)^{-1}P_{\sM_{\bar z}},$ $z\in\dC_+$.

Because $\sN_z$ is the eigen-subspace of $\wt S_z$, the subspace $\sM_{\bar z}$ is invariant for the resolvent of the adjoint operator $\wt S_{\bar z}$
and
\[
\dom \wt S_{\bar z}\cap\sM_{\bar z}=(\wt S_{\bar z}-z I)^{-1}\sM_{\bar z}.
\]
\begin{proposition}\label{extrrr}
Let $\wt S_z$ ($\IM z\ne 0$) be the  Shtraus extension of $S$ of the form \eqref{slamb}. Then
\begin{enumerate}
\item if $S$ is densely defined, then $\wt S_z\in\bD_{{\rm{sing}}}$ for each $z\in\dC_+$;
\item if $S$ is non-densely defined $S$, then $\wt S_z\notin \bD_{{\rm{sing}}}$ for each $z\in\dC_+.$

\end{enumerate}
\end{proposition}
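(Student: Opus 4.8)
The plan is to reduce everything to the structural facts already available: that the Hermitian domain of the Shtraus extension is $\sS_{\wt S_z}=\dom S$ (this is recorded just above the statement and follows from Theorem \ref{dljgec}, the contraction parameter of $\wt S_z$ being $M_z=0$), together with the criteria for membership in $\bD_{\rm sing}$ furnished by Proposition \ref{zghbl} and Theorem \ref{xfcnyck}.

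Part (1) is then immediate: when $S$ is densely defined, the Hermitian domain $\sS_{\wt S_z}=\dom S$ is dense in $\sH$, and by Theorem \ref{xfcnyck}(a) (equivalently, by the observation following \eqref{kerna}) a maximal dissipative operator with dense Hermitian domain lies in $\bD_{\rm sing}$; hence $\wt S_z\in\bD_{\rm sing}$ for every $z\in\dC_+$.

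For part (2) I would work with the Cayley transform $\wt Y_z=(S-zI)(S-\bar z I)^{-1}P_{\sM_{\bar z}}$ of $\wt S_z$, which is the partial isometry displayed above. Since $U_z:=(S-zI)(S-\bar z I)^{-1}$ maps $\sM_{\bar z}$ unitarily onto $\sM_z$, one computes $\wt Y_z\wt Y^*_z=P_{\sM_z}$, hence $D^2_{\wt Y^*_z}=I-P_{\sM_z}=P_{\sN_{\bar z}}$ and $\ran D_{\wt Y^*_z}=\sN_{\bar z}$. By the equivalence (i)$\Leftrightarrow$(viii) of Proposition \ref{zghbl} it therefore suffices to show $\sN_{\bar z}\cap\dom\wt S_z\ne\{0\}$. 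Using $\dom\wt S_z=\dom S\dot+\sN_z$ and the identity \eqref{rhfc11} (with $\lambda=z$), this intersection equals $P_{\sN_{\bar z}}\sL$, where $\sL=\sH\ominus\cdom S$. Since $S$ is non-densely defined, $\sL\ne\{0\}$; and if some $h\in\sL$ satisfied $P_{\sN_{\bar z}}h=0$, then $h\in\sN_{\bar z}^{\perp}=\sM_z$ (a subspace, as $z$ is of regular type), so $h\in\sL\cap\sM_z=\{0\}$ by Krasnosel'ski\u{\i}'s result \cite[Lemma 2]{krasno} quoted above. Hence $P_{\sN_{\bar z}}\sL\ne\{0\}$ and $\wt S_z\notin\bD_{\rm sing}$.

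The only point of substance is the non-triviality of $P_{\sN_{\bar z}}\sL$, and its sole input beyond bookkeeping is the fact $\sL\cap\sM_\lambda=\{0\}$; without it one could not rule out $\sL$ lying inside $\sM_z$. An alternative wording of part (2) bypasses the Cayley transform altogether: by the formula displayed above, $\dom\wt S_z\cap\dom\wt S_z^*=\dom S\dot+P_{\sN_{\bar z}}\sL$, which strictly contains $\dom S=\sS_{\wt S_z}$ once $P_{\sN_{\bar z}}\sL\ne\{0\}$, so the contrapositive of Theorem \ref{xfcnyck}(2) gives $\wt S_z\notin\bD_{\rm sing}$. In fact the computation above shows $\wt S_z\in\bD_{\rm sing}$ if and only if $\sL=\{0\}$, i.e. if and only if $S$ is densely defined, so (1) and (2) are really two halves of a single equivalence.
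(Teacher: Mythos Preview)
Your argument is correct. Part~(1) matches the paper's proof verbatim. For part~(2) the paper takes a close variant of your route: it applies the $M_\lambda$--criterion of Theorem~\ref{xfcnyck}(1)(iii) directly, noting that for $\wt S_z$ one has $M_z=0$, hence $D_{M_z}=I_{\sN_z}$ and
\[
\ran D_{M_z}\cap(P_{\sN_z}-M_z^*P_{\sN_{\bar z}})\sL=P_{\sN_z}\sL=\sL_z,
\]
which is nonzero. You instead work one level up, with the full Cayley transform $\wt Y_z$ and Proposition~\ref{zghbl}(viii), obtaining $\ran D_{\wt Y_z^*}\cap\dom\wt S_z=P_{\sN_{\bar z}}\sL=\sL_{\bar z}$ via \eqref{rhfc11}; your alternative via the contrapositive of Theorem~\ref{xfcnyck}(2) and the displayed formula for $\dom\wt S_z\cap\dom\wt S_z^*$ is equally clean. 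The two computations are mirror images (one lands in $\sN_z$, the other in $\sN_{\bar z}$), and both hinge on the same fact $\sL\cap\sM_\lambda=\{0\}$, which you make explicit and the paper leaves implicit.
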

\begin{proof}(1) If $S$ is densely defined, then by Theorem \ref{xfcnyck} each maximal dissipative extension of $S$ belongs to the class $\bD_{{\rm{sing}}}$.

(2) Assume $S$ is non-densely defined. Let $z\in\dC_+$. Because $M_z=0:\sN_z\to\sN_{\bar z}$ corresponds to the Shtraus extension $\wt S_z$ via the representation \eqref{dljgec}, we obtain $D_{M_z}=I_{\sN_z}$ and
\[
\ran D_{M_z}\cap(P_{\sN_z}-M^*_zP_{\sN_{\bar z}})(\dom S)^\perp=P_{\sN_z}(\dom S)^\perp=\sL_z,
\]
where the linear manifold $\sL_z$ is defined in \eqref{gjlghjcn}.
Hence, Theorem \ref{xfcnyck} yields $\wt S_z\notin \bD_{{\rm{sing}}}$.
\end{proof}

Further we give the operator-matrix representation of the Shtraus extension $\wt S_z$, $z\in\dC_+$. For this purpose we define linear operators:
\begin{equation}\label{al}
A_z:=P_{\sM_{\bar z}}\wt S_z\uphar\left(\sM_{\bar z}\cap\dom \wt S_z\right)
\end{equation}
and
\begin{equation}\label{bl}
B_z:=P_{\sN_{z}}\wt S_z\uphar\left(\sM_{\bar z}\cap\dom \wt S_z\right),\; \Gamma_z:=\cfrac{1}{2i\sqrt{\IM z}}\,B_z.
\end{equation}
Because $\wt S_z\uphar\sN_z=zI_{\sN_z}$, the domain $\dom\wt S_z$ admits the orthogonal decomposition
\[
\dom\wt S_z=(\sM_{\bar z}\cap\dom\wt S_z)\oplus \sN_z. 
\]
Hence, from \eqref{al} and \eqref{bl} we get
\begin{equation}\label{jrnj26b}
\left\{\begin{array}{l}
\dom \wt S_z=\dom A_z\oplus\sN_z,\\[2mm]
\wt S_z\begin{bmatrix}f\cr \f_z\end{bmatrix}=\begin{bmatrix} A_z&0\cr B_z&zI_{\sN_z}\end{bmatrix}\begin{bmatrix}f\cr\f_z\end{bmatrix}=\begin{bmatrix} A_z&0\cr 2i\sqrt{\IM z}\,\Gamma_z&zI_{\sN_z}\end{bmatrix}\begin{bmatrix}f\cr\f_z\end{bmatrix}=\begin{bmatrix}A_z f\cr 2i\sqrt{\IM z}\,\Gamma_zf+\f_z\end{bmatrix}\\[2mm]
\qquad f\in\dom A_z,\;\f_z\in\sN_z. 
\end{array}\right..
\end{equation}
Recall that a linear operator $A$ in the Hilbert space is called \textit{accretive} (see \cite{Ka}) if $\RE (Af,f)\ge 0$ for all $f\in\dom A$ and
$A$ is maximal accretive if it has no accretive extensions without exit. So, $A$ is a maximal accretive if and only if $T=iA$ is maximal dissipative
A linear operator $A$ is called \emph{sectorial with vertex at the origin $z=0$ and the
semi-angle $\alpha \in (0,\pi/2)$}, if its numerical range $W(A):=\{(Af,f), f\in\dom A, ||f||=1\}$ is contained in a
closed sector with semi-angle~$\alpha$, i.e.,
\[
  W(A) \subseteq 
  \left\{z\in\dC:|\arg z|\le \alpha\right\}\Longleftrightarrow |\IM (A u,u)|\!\le\! \tan\alpha \,\RE(A u,u)\;\forall u\!\in\!\dom A.
\]
Clearly, a sectorial operator is accretive; it is called maximal sectorial if
it is maximal accretive.

Note that if $A$ is a maximal sectorial operator, then the operator $T=iA$ is maximal dissipative and according to the second representation theorem \cite[Sect.~VI.2.6]{Ka} the quadratic form $\gamma_T$ admits the representation
\[
\gamma_T[u]=\IM (Tu,u)=\RE (Au,u)=||\Gamma u||^2\;\; u\in\dom T
\]
with a closable operator $\Gamma$.

\begin{theorem}\label{al2}
(1a) In the subspace $\sM_{\bar z}$ the operator $A_z$ ($z\in\dC_+)$ is maximal dissipative.
Moreover
\begin{equation}\label{al3}
\left\{\begin{array}{l}\dom A_z=\sM_{\bar z}\cap\dom \wt S_z=P_{\sM_{\bar z}}\dom S,\\[2mm]
 A_z(P_{\sM_{\bar z}}f_S)=P_{\sM_{\bar z}}Sf_S=(S-\bar z I)f_S+\bar z P_{\sM_{\bar z}}f_S,\\[2mm]
 \left(A_z(P_{\sM_{\bar z}}f_S\right), P_{\sM_{\bar z}}f_S)=\left(Sf_S,f_S\right)-\bar z||P_{\sN_z}f_S||^2,\;f_S\in\dom S.
 \end{array}
 \right.
\end{equation}
(1b) If the symmetric operator $S$ is nonnegative  (i.e., $(Sf,f)\ge 0$ for all $f\in\dom S$), then for $\RE z=0$ the operator $A_z$ is accretive and for $z: \RE z<0$ the operator $A_z$ is sectorial with the vertex at the origin and the semi-angle $\alpha=|\arg(-z)|.$

(2) The operator $B_z$ given by \eqref{bl} takes the form
\begin{equation}
\label{bl2}
\left\{\begin{array}{l}
\dom B_z=P_{\sM_{\bar z}}\dom S,\\
B_z(P_{\sM_{\bar z}}f_S)=-2i\IM z P_{\sN_z} f_S,\; f_S\in\dom S
\end{array}\right..
\end{equation}
and
\begin{equation} \label{bzaz}
||B_zf||^2=4\IM z\,\IM(A_zf,f) 
\;\;\forall f\in\sM_{\bar z}\cap \dom \wt S_z.
\end{equation}
(3) Let the operator $\Gamma_z$ be defined in \eqref{bl}, then
\begin{equation}\label{gamz}
\Gamma_z(P_{\sM_{\bar z}}f_S)=-\sqrt{\IM z}\,P_{\sN_z}f_S,\;\;f_S\in\dom S,
\end{equation}
the Hermitian domain and the Hermitian part of $A_z$ are given by
\begin{equation}\label{jrn27a}
\left\{\begin{array}{l}\sS_{A_z}=\ker\Gamma_z=\sM_{\bar z}\cap\dom S=(S-\bar zI)\dom S^2,\\
A_z\uphar\sS_{A_z}=P_{\sM_{\bar z}}S\uphar(\sM_{\bar z}\cap\dom S),
\end{array}\right.
\end{equation}
and
 $\{\sN_z,\Gamma_z\}$ is the boundary pair of $A_z.$

 (4) The adjoint $A_z^*$ is of the form
\begin{equation}\label{aladj}
\left\{\begin{array}{l}
\dom A_z^*=\left(\wt S_{\bar z}-z I\right)^{-1}\sM_{\bar z}=\dom \wt S_{\bar z}\cap\sM_{\bar z}=(S-\bar z I)\dom (\wt S_z S),\\[3mm]
A_z^*=\wt S_{\bar z}\uphar\dom A_z^*.
\end{array}\right.
\end{equation}

(5) The operator $A_z$ belongs to the class $\bD_{\rm sing}$ if and only if $\dom S$ is dense in $\sH.$

(6) The operator $S$ takes the form
\begin{equation}\label{jrn26}
\left\{\begin{array}{l}
\dom S=\left(I_{\sM_{\bar z}}+\cfrac{i}{2\IM z}B_z\right)\dom A_z=\left\{\begin{bmatrix}f\cr \cfrac{-\Gamma_z f}{\sqrt{\IM z}}\end{bmatrix}: f\in\dom A_z\right\},\\[2mm]
S\begin{bmatrix}f\cr-\cfrac{\Gamma_z f}{\sqrt{\IM z}}\end{bmatrix}=\begin{bmatrix}A_z f\cr-\cfrac{\bar z\, \Gamma_z f}{\sqrt{\IM z}}\end{bmatrix},\; f\in\dom A_z.
\end{array}\right.
\end{equation}

\end{theorem}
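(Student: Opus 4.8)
\emph{One elementary identity drives the whole proof.} For $f_S\in\dom S$ the vector $(S-\bar z I)f_S$ lies in $\sM_{\bar z}=\sN_z^\perp$, hence
\[
P_{\sN_z}Sf_S=\bar z\,P_{\sN_z}f_S,\qquad P_{\sM_{\bar z}}Sf_S=(S-\bar z I)f_S+\bar z\,P_{\sM_{\bar z}}f_S\qquad(f_S\in\dom S).
\]
Combining this with the splitting $\dom\wt S_z=(\sM_{\bar z}\cap\dom\wt S_z)\oplus\sN_z$, the first step is to note that $f_S+\f_z\in\sM_{\bar z}$ (with $f_S\in\dom S$, $\f_z\in\sN_z$) forces $\f_z=-P_{\sN_z}f_S$, so $\sM_{\bar z}\cap\dom\wt S_z=P_{\sM_{\bar z}}\dom S=\dom A_z$ (dense in $\sM_{\bar z}$, since its complement there is $\sL\cap\sM_{\bar z}=\{0\}$ by \cite[Lemma~2]{krasno}, so that $A_z^*$ is an operator), the element in question equals $P_{\sM_{\bar z}}f_S$, and $[P_{\sM_{\bar z}}f_S;P_{\sN_z}f_S]=f_S$ in $\sH=\sM_{\bar z}\oplus\sN_z$. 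Substituting $g=P_{\sM_{\bar z}}f_S$ into \eqref{al}, \eqref{bl} and using the remark then yields, essentially mechanically, all the explicit formulas: $A_z g=P_{\sM_{\bar z}}Sf_S$ and $(A_z g,g)=(Sf_S,f_S)-\bar z\|P_{\sN_z}f_S\|^2$ of (1a); $B_z g=(\bar z-z)P_{\sN_z}f_S=-2i\IM z\,P_{\sN_z}f_S$ and (reading $\IM(A_z g,g)=\IM z\|P_{\sN_z}f_S\|^2$ off the form identity) $\|B_z g\|^2=4\IM z\,\IM(A_z g,g)$ of (2); $\Gamma_z g=-\sqrt{\IM z}\,P_{\sN_z}f_S$ of (3); and the description $\dom S=\{[f;-\Gamma_z f/\sqrt{\IM z}]:f\in\dom A_z\}=(I_{\sM_{\bar z}}+\frac{i}{2\IM z}B_z)\dom A_z$ with $S[f;-\Gamma_z f/\sqrt{\IM z}]=[A_z f;-\bar z\Gamma_z f/\sqrt{\IM z}]$ of (6). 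Moreover $\ker\Gamma_z=\{f_S\in\dom S:P_{\sN_z}f_S=0\}=\sM_{\bar z}\cap\dom S=(S-\bar z I)\dom S^2$ by Lemma~\ref{dbltnm}(1); polarizing $\IM(A_z g,g)=\|\Gamma_z g\|^2$ gives the Green identity for $A_z$, and together with $\cran\Gamma_z=\cran(P_{\sN_z}\dom S)=\sN_z$ this settles (3). (The equality $\cran\Gamma_z=\sN_z$ is immediate when $\dom S$ is dense; in general the orthogonal complement of $P_{\sN_z}\dom S$ inside $\sN_z$ is $\sL\cap\sN_z$, and parts (3),(5) really need this to vanish — which I would take as a standing nondegeneracy assumption when $\dom S$ is not dense.)

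For (1a)--(1b): dissipativity of $A_z$ in $\sM_{\bar z}$ is $\IM(A_z g,g)=\IM z\|P_{\sN_z}f_S\|^2\ge0$, and $(A_z-\bar z I)(P_{\sM_{\bar z}}f_S)=(S-\bar z I)f_S$ gives $\ran(A_z-\bar z I)=\ran(S-\bar z I)=\sM_{\bar z}$; since $\IM\bar z<0$, surjectivity of $A_z-\bar z I$ forces $A_z$ closed and maximal dissipative. When $S\ge0$ the form identity reads $\RE(A_z g,g)=(Sf_S,f_S)-\RE z\,\|P_{\sN_z}f_S\|^2\ge0$ for $\RE z\le0$, and for $\RE z<0$ it obeys $|\IM(A_z g,g)|\le(\IM z/|\RE z|)\RE(A_z g,g)$ with $\tan|\arg(-z)|=\IM z/|\RE z|$, so $A_z$ is accretive, resp.\ sectorial with vertex $0$ and semi-angle $|\arg(-z)|$; maximality of the accretive/sectorial operator again follows from (1a) and the surjectivity just recorded.

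For the adjoint (4): set $C:=\wt S_{\bar z}\uphar(\dom\wt S_{\bar z}\cap\sM_{\bar z})$. Invariance of $\sM_{\bar z}$ under $(\wt S_{\bar z}-z I)^{-1}$, already recorded before the theorem, gives $\dom C=(\wt S_{\bar z}-z I)^{-1}\sM_{\bar z}$, shows $C$ maps into $\sM_{\bar z}$ with $\ran(C-z I)=\sM_{\bar z}$, and — since $\wt S_{\bar z}=\wt S_z^*$ is accumulative and $\IM z>0$ — shows $C$ is maximal accumulative. Then for $g\in\dom A_z$ and $h\in\dom C\subset\sM_{\bar z}$, $(A_z g,h)=(P_{\sM_{\bar z}}\wt S_z g,h)=(\wt S_z g,h)=(g,\wt S_z^*h)=(g,Ch)$, so $C\subseteq A_z^*$, and since both are maximal accumulative, $A_z^*=C$. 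Writing $g=(S-\bar z I)f$ with $f\in\dom S$, finally, $g\in\dom\wt S_{\bar z}=\dom S+\sN_{\bar z}\iff Sf=g+\bar z f\in\dom S+\sN_{\bar z}\iff f\in\dom(\wt S_{\bar z}S)$, which is the third description of $\dom A_z^*$ (the composition in the displayed statement should read $\wt S_{\bar z}S$).

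It remains to treat (5). The cleanest route uses (6): \eqref{jrnj26b} exhibits $\wt S_z$ as precisely the maximal dissipative lifting of $T=A_z$ attached to the boundary pair $\{\sN_z,\Gamma_z\}$, and by (6) the Hermitian part of that lifting is $S$, with $\dom S$ equal as a subset of $\sH$ to the domain of that Hermitian part; by the criterion recalled in the Introduction (\cite[Theorem~4.2]{ArlCAOT2023}) the Hermitian part of such a lifting is densely defined iff $T\in\bD_{{\rm sing}}$, so $A_z\in\bD_{{\rm sing}}\iff\dom S$ is dense in $\sH$. A direct argument is also available: by Proposition~\ref{zghbl} and \eqref{shmuly}, $A_z\in\bD_{{\rm sing}}$ iff $\sup\{|(f_S,h)|^2/\|P_{\sN_z}f_S\|^2:f_S\in\dom S,\ P_{\sN_z}f_S\neq0\}=+\infty$ for every $h\in\sM_{\bar z}\setminus\{0\}$; this holds when $\dom S$ is dense (take $f_S\to h$, so $(f_S,h)\to\|h\|^2$ and $P_{\sN_z}f_S\to P_{\sN_z}h=0$), whereas if $\dom S$ is not dense and $\ell\in\sH\ominus\cdom S$ has $h:=P_{\sM_{\bar z}}\ell\neq0$, then $(f_S,h)=-(P_{\sN_z}f_S,\ell)$ gives $|(f_S,h)|\le\|\ell\|\,\|P_{\sN_z}f_S\|$, so $h\in\ran\Gamma_z^\times\cap\sM_{\bar z}$. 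I expect the real work to sit in (4) — the maximal accumulativity of $C$ and the reconciliation of the three descriptions of $\dom A_z^*$ — and in the nondensity half of (5); everything else is bookkeeping around the single identity $P_{\sN_z}Sf_S=\bar z\,P_{\sN_z}f_S$.
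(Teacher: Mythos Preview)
Your proof is correct and follows essentially the same route as the paper: the identity $P_{\sN_z}Sf_S=\bar z\,P_{\sN_z}f_S$ together with the splitting $\dom\wt S_z=(\sM_{\bar z}\cap\dom\wt S_z)\oplus\sN_z$ is exactly what drives the paper's computations for (1a)--(3) and (6), and your treatment of (4) via the maximal accumulative restriction $C=\wt S_{\bar z}\uphar(\dom\wt S_{\bar z}\cap\sM_{\bar z})$ is the paper's argument spelled out in full (the paper compresses it to one sentence invoking $\wt S_z^*=\wt S_{\bar z}$ and maximal dissipativity of $-A_z^*$). Your observation that the displayed formula should read $\dom(\wt S_{\bar z}S)$ rather than $\dom(\wt S_z S)$ is correct.

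The only genuine methodological difference is in (5). The paper computes $\dom\Gamma_z^*$ directly: if $\psi_z\in\dom\Gamma_z^*$ with $\Gamma_z^*\psi_z=h\in\sM_{\bar z}$, then $(-\sqrt{\IM z}\,P_{\sN_z}f_S,\psi_z)=(P_{\sM_{\bar z}}f_S,h)$ for all $f_S\in\dom S$, i.e.\ $h+\sqrt{\IM z}\,\psi_z\in(\dom S)^\perp$, whence $\dom\Gamma_z^*=\{0\}\iff(\dom S)^\perp=\{0\}$. This is a shade more direct than your supremum argument (which is the same thing read through \eqref{shmuly}), and it bypasses your lifting route via \cite[Theorem~4.2]{ArlCAOT2023} entirely. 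Note also that the paper's computation of $\dom\Gamma_z^*$ does not rely on $\overline{\ran}\,\Gamma_z=\sN_z$, so your caveat about $\sL\cap\sN_z$ in the non-dense case, while a fair remark about the boundary-pair terminology in (3), does not affect the proof of (5).
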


\begin{proof}
Let $f_S\in\dom S$. Then
\[
P_{\sM_{\bar z}}f_S=f_S-P_{\sN_z}f_S\in\dom S\dot+\sN_z=\dom\wt S_z.
\]
By definition \eqref{al} of the operator $A_z$ we get that $P_{\sM_{\bar z}}f_S\in\dom A_z$ .

Suppose $f=f_S+\f_z\in\sM_{\bar z}$, where $f_S\in\dom S$, $\f_z\in\sN_z$. Then $P_{\sM_{\bar z}}f=f=P_{\sM_{\bar z}}f_S$, i.e., $f\in\dom A_z.$ Hence $\dom A_z=P_{\sM_{\bar z}}\dom S.$

If $\dom S$ is dense in $\sH$, then the linear manifold $P_{\sM_{\bar z}}\dom S$ is dense in $\sM_{\bar z}$ for all $z\in\wh\rho(S)$.
If $S$ is not dense, then since $\sL\cap\sM_{\bar z}=\{0\}$ \cite[Lemma 2]{krasno} for all $z\in\dC\setminus\dR$ (where $\sL=(\dom S)^\perp$, we get that $P_{\sM_{\bar z}}\dom S$ is dense in $\sM_{\bar z}$ as well.
Furthermore,
\[
A_z(P_{\sM_{\bar z}}f_S)=P_{\sM_{\bar z}}\wt S_z f_S=P_{\sM_{\bar z}}(S-\bar z I)f_S+ \bar z P_{\sM_{\bar z}}f_S 
=(S-\bar z I)f_S+\bar zP_{\sM_{\bar z}}f_S,
\]
\[
\begin{array}{l}
\left(A_z(P_{\sM_{\bar z}}f_S), P_{\sM_{\bar z}}f_S\right)=\left(P_{\sM_{\bar z}}Sf_S, P_{\sM_{\bar z}}f_S\right)\\[3mm]
=\left(Sf_S,f_S-P_{\sN_z}f_S\right)=\left(Sf_S,f_S\right)- (P_{\sN_z}Sf_S,f_S)=(Sf_S,f_S)-(P_{\sN_z}(S-\bar zI)+\bar zP_{\sN_z}f_S,f_S)\\
=(Sf_S,f_S)-\bar z||P_{\sN_z}f_S||^2.
\end{array}
\]
Consequently
\begin{equation}\label{vybvfz1}
\IM\left(A_z(P_{\sM_{\bar z}}f_S), P_{\sM_{\bar z}}f_S\right)=\IM z||P_{\sN_z}f_S||^2,\; f_S\in\dom S.
\end{equation}
\[
\begin{array}{l}
\ran (A_z-\bar z I_{\sM_{\bar z}})
=\left\{P_{\sM_{\bar z}}Sf_S-\bar z P_{\sM_{\bar z}}f_S: f_S\in\dom S\right\}\\[3mm]=\left\{P_{\sM_{\bar z}}(S-\bar z I)f_S: f_S\in\dom S\right\} 
=\left\{(S-\bar z I)f_S: f_S\in\dom S\right\}=\sM_{\bar z}
\end{array}
\]
Since $z\in\dC_+$, it follows that the operator $A_z$ is maximal dissipative in the Hilbert space $\sM_{\bar z}.$
Thus, \eqref{al3} is proved.

Suppose $(Sf,f)\ge 0$ for all $f\in\dom S$. Then for
 $\RE z=0$ the last equality in \eqref{al3} shows that the operator $A_z$ is accretive.
If $\RE z<0$, then from \eqref{vybvfz1} and \eqref{al3}
\[
\left|\IM (A_zP_{\sM_{\bar z}}f_S, P_{\sM_{\bar z}}f_S)\right|\le \tan |\arg(-z)| \RE (A_z P_{\sM_{\bar z}}f_S, P_{\sM_{\bar z}}f_S)\;\forall f_S\in\dom S.
\]
Hence, $A_z$ is sectorial with vertex at the origin and the semi-angle $|\arg(-z)|$. 

Let $h=P_{\sM_{\bar z}}f_S$, $f_S\in\dom S$, then $h\in\dom \wt S_z$ and from \eqref{slamb}
\[
\begin{array}{l}
P_{\sN_z}\wt S_z h=P_{\sN_z}\wt S_z(f_S-P_{\sN_z}f_S)=P_{\sN_z}Sf_S-zP_{\sN_z}f_S=P_{\sN_z}(S-zI)f_S\\
=P_{\sN_z}(S-\bar zI)f_S-(z-\bar z)P_{\sN_z}f_S=-2i\IM z P_{\sN_z}f_S.
\end{array}
\]
Thus, relations \eqref{bl2} hold. Hence from \eqref{vybvfz1} we get \eqref{bzaz}.

It follows from \eqref{vybvfz1} that $\sN_z,$ is a boundary space and the operator $\Gamma_z$, defined in \eqref{gamz}, is a boundary operator for $A_z$.
Expression \eqref{jrn26} for $S$ is a consequence of \eqref{jrnj26b}, \eqref{al3}, and \eqref{bl2}.

The equality \eqref{jrn27a} is a consequence of \eqref{al3} and \eqref{gamz}.

Observe that the matrix representation \eqref{jrnj26b} yields that
\begin{equation}\label{dilres}
(A_z-\lambda I)^{-1}=P_{\sM_{\bar z}}(\wt S_z-\lambda I)^{-1}\uphar\sM_{\bar z}\;\;\forall \lambda\in\rho(\wt S_z).
\end{equation}

The relations in \eqref{dilres} and \eqref{aladj} follow from the equalities $\wt S^*_z=\wt S_{\bar z}$, $(\wt S^*_z-\xi I)^{-1}\sM_{\bar z}=\dom \wt S^*_z\cap\sM_{\bar z}$, $\xi\in\rho(\wt  S^*_z)$ and from the fact that $-A_z^*$ is maximal dissipative in $\sM_{\bar z}$.

Suppose $\psi_z\in\dom \Gamma^*_z$, where $\psi_z\in\sN_z$. Then there is $h\in\sM_{\bar z}$ such that
\[
(\Gamma_z(P_{\sM_{\bar z}}f_S),\psi_z)=(P_{\sM_{\bar z}}f_S, h)\Longleftrightarrow (-\sqrt{\IM z}\,P_{\sN_z}f_S,\psi_z)=(P_{\sM_{\bar z}}f_S, h).
\]
Hence
$
h+\sqrt{\IM z}\,\psi_z\in(\dom S)^\perp.
$ 
Since $\sM_{\bar z}\perp\sN_z$, it follows that
$$\dom\Gamma_z^*=\{0\}\Longleftrightarrow (\dom S)^\perp=\{0\}.$$
Now Proposition \ref{zghbl} gives $A_z\in\bD_{\rm sing}$ $\Longleftrightarrow$ $(\dom S)^\perp=\{0\}.$
\end{proof}
\begin{remark}\label{defind}
The Hermitian part $A_z\uphar\sS_{A_z}$ defined in \eqref{jrn27a} is the compression of the operator $S$ on the subspace $\sM_{\bar z}$. It is established in \cite[Theorem 5.1]{ACD} that
if $S$ is a closed densely defined symmetric operator with deficiency indices $\left<n_+, n_-\right>$, then the compression of $S$ on the finite co-dimensional subspace is symmetric, closed, densely defined and has the same deficiency indices. Consequently, if $\dim\sN_z<\infty$ then the operator $A_z\uphar\sS_{A_z}$ is closed densely defined symmetric operator with the same deficiency indices as $S$.
\end{remark}


\section{Maximal dissipative operators from the class $\bD_{{\rm{sing}}}$ whose domains intersect domains of their adjoint only at zero}
\label{nov12a}
The theorem below provides abstract examples of maximal dissipative operators whose domains have trivial intersections with domains of their adjoints.
\begin{theorem} \cite[Proposition 5.2, Theorem 5.3, Proposition 5.7]{ArlCAOT2023}\label{jrn3a}
\begin{enumerate}
\item Assume that the bounded selfadjoint operators $L$ and $M$ in $\cH$ satisfy the conditions
\begin{equation}\label{jgthfnkv}
\ker L=\{0\},\;\ker M=\{0\},\;\ran M\cap\ran L=\{0\}.
\end{equation}
Define the operator
\begin{equation}\label{yjtjkl}
A:=(M-iL^2)^{-1}.
\end{equation}
Then $A\in\bD_{\rm sing}$, $0\in\rho(A)$, $\dom A\cap\dom A^*=\{0\},$  and $\{\cH,LA\}$ is a boundary pair for $A.$

\item Let $Q$ be a bounded selfadjoint operator in the Hilbert space~ $\cH$,  $\ker Q=\{0\}$ and $\ran Q\ne \cH$. Let $\sM$ be a proper subspace in $\cH$ such that
\begin{equation}\label{yektd11}
\ran Q\cap\sM= \ran Q \cap\sM^\perp=\{0\}.
\end{equation}
Then  the operator
\begin{equation}  \label{Adissip}
A_0:=\left(-\left(Q\left(QP_\sM Q\right)^\half+\left(QP_\sM Q\right)^\half Q\right)-iQP_{\sM^\perp}Q\right)^{-1}
\end{equation}
is unbounded maximal dissipative, has bounded inverse, belong to the class $\bD_{\rm sing}$, $\dom A_0\cap\dom A_0^*=\{0\},$ and $\{\sM^\perp,P_{\sM^\perp}QA_0\}$
is a boundary pair of $A_0.$
\end{enumerate}
\end{theorem}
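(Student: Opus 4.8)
The statement to be proved is Theorem \ref{jrn3a}, which is quoted from \cite{ArlCAOT2023}, so strictly speaking it is cited rather than reproved; nonetheless I would reconstruct the argument as follows. In part (1), the plan is to first verify that $A = (M - iL^2)^{-1}$ is a well-defined, densely defined, unbounded maximal dissipative operator with $0 \in \rho(A)$. The operator $B := M - iL^2$ is bounded with $\RE(Bf,f) = (Mf,f)$ having trivial kernel and $\IM(-Bf,f) = (L^2 f,f) = \|Lf\|^2 \ge 0$, so $-iB$ is accretive; one checks $\ker B = \{0\}$ using $\ker M = \{0\}$ (the imaginary part only helps), hence $A = B^{-1}$ exists as a densely defined operator, and since $B$ is bounded with dense range (range dense because $\ker B^* = \ker(M+iL^2) = \{0\}$ similarly), $A$ is unbounded maximal dissipative with $0 \in \rho(A)$. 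The key structural identity is $\IM(Af, f) = \IM(B^{-1}f, f)$: writing $g = B^{-1}f$ one gets $\IM(g, Bg) = (L^2 g, g) = \|Lg\|^2$, so $\gamma_A[f] = \|L A f\|^2$, which immediately exhibits $\{\cH, LA\}$ as a boundary pair (modulo checking $\cran(LA) = \cH$, which follows from $\ker L = \{0\}$ and $\ran A = \cH$).

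For the class membership and the intersection property in part (1), I would use Proposition \ref{zghbl}: $A \in \bD_{\rm sing}$ iff $\dom \Gamma^* = \{0\}$ where $\Gamma = LA$. Now $\Gamma^* = A^* L$ (as $L$ is bounded selfadjoint), and $\dom \Gamma^* = \{h : Lh \in \dom A^*\}$. Since $A^* = (M + iL^2)^{-1}$ has $\dom A^* = \ran(M + iL^2)$, the condition becomes $Lh \in \ran(M + iL^2)$. The hard computational point is to show this forces $h = 0$: one writes $Lh = (M + iL^2)u$, takes real parts against suitable vectors, and plays $\ran M \cap \ran L = \{0\}$ against the structure — essentially $Lh + \text{(something in }\ran L) \in \ran M$, forcing both sides to vanish. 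The same algebra, pushed one step further, yields $\dom A \cap \dom A^* = \{0\}$: a vector $g$ in the intersection satisfies $Bg \in \cH$ and $B^*g = (M+iL^2)g \in \cH$ trivially, so instead one argues $g = B^{-1}f = (B^*)^{-1}f'$ forces $f = Bg$ and $f' = B^*g$ with $f - f' = (B - B^*)g = -2iL^2 g$, and combined with $f, f' \in \dom A \cap \dom A^* \subset \ran B \cap \ran B^*$ and the disjointness of $\ran M$, $\ran L$ one extracts $Lg = 0$, hence $g = 0$; alternatively one invokes Lemma \ref{ygkcvby} together with part (2)(vii) of Proposition \ref{zghbl} since $\ran \Gamma^\times \cap \cH = \{0\}$ is already established.

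Part (2) follows the same template with $A_0 = C^{-1}$ where $C = -\bigl(Q(QP_\sM Q)^{1/2} + (QP_\sM Q)^{1/2}Q\bigr) - iQP_{\sM^\perp}Q$. Here the real part of $-C$ is $Q(QP_\sM Q)^{1/2} + (QP_\sM Q)^{1/2}Q$, whose nonnegativity and the identification $\IM(-C f, f) = (QP_{\sM^\perp}Q f, f) = \|P_{\sM^\perp}Q f\|^2$ are the facts to verify (the real part being the anticommutator of $Q$ with a nonnegative operator commuting appropriately; positivity uses $\ker Q = \{0\}$ and a shorted-operator type computation). This gives $\gamma_{A_0}[f] = \|P_{\sM^\perp} Q A_0 f\|^2$, hence $\{\sM^\perp, P_{\sM^\perp}QA_0\}$ is a boundary pair. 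The boundedness of $A_0$ is excluded and $0 \in \rho(A_0)$ holds because $C$ is bounded and boundedly invertible — this is where $\ran Q \ne \cH$ enters to guarantee $A_0$ is genuinely unbounded. Finally, $\dom \Gamma^* = \{0\}$ and $\dom A_0 \cap \dom A_0^* = \{0\}$ are obtained exactly as before by invoking conditions \eqref{yektd11}, i.e. $\ran Q \cap \sM = \ran Q \cap \sM^\perp = \{0\}$, against the range structure of $C$ and $C^*$.

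\textbf{Main obstacle.} The genuinely delicate step in both parts is verifying that the disjointness-of-ranges hypotheses (\eqref{jgthfnkv}, resp. \eqref{yektd11}) actually force $\dom \Gamma^* = \{0\}$ — equivalently $\ran \Gamma^\times \cap \cH = \{0\}$ in the rigged-space language of Subsection \ref{nov17a}. Unlike the formal manipulations with $A = B^{-1}$, this requires carefully separating a candidate vector into components lying in $\ran M$ and $\ran L$ (resp. in $\sM$ and $\ran Q$ after applying $Q$) and exploiting that their intersection is trivial; subtleties with domains of unbounded inverses and with the square-root $(QP_\sM Q)^{1/2}$ in part (2) mean the bookkeeping must be done with care rather than formally.
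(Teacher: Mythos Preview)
Your observation is correct: in the present paper Theorem \ref{jrn3a} is quoted from \cite{ArlCAOT2023} and no proof is given here, so there is nothing in this paper to compare your argument against line by line. That said, your reconstruction of part (1) is essentially the intended argument: identify $B=M-iL^2$, check $\ker B=\ker B^*=\{0\}$, compute $\IM(Af,f)=\|LAf\|^2$ to obtain the boundary pair $\{\cH,LA\}$, and then use $\ran L\cap\ran M=\{0\}$ to show that $Lh\in\ran(M+iL^2)$ forces $h=0$, giving $\dom\Gamma^*=\{0\}$ and hence $A\in\bD_{\rm sing}$ via Proposition \ref{zghbl}. The cleanest route to $\dom A\cap\dom A^*=\{0\}$ is indeed the one you mention second: since $\ker\Gamma=\ker(LA)=\{0\}$ (because $\ker L=\{0\}$ and $0\in\rho(A)$) we have $\sS_A=\{0\}$, and then Lemma \ref{ygkcvby} (equivalently Theorem \ref{xfcnyck}(2)) gives $\dom A\cap\dom A^*=\sS_A=\{0\}$.

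Two small inaccuracies worth fixing. First, in part (1) the sign bookkeeping is slightly off: it is $iB$ (not $-iB$) that is accretive, and $\ker B=\{0\}$ follows most directly from $\ker L=\{0\}$ (taking imaginary parts of $(Bf,f)=0$), though $\ker M=\{0\}$ also suffices once $Lf=0$. Second, in part (2) you assert that the real part $R=Q(QP_\sM Q)^{1/2}+(QP_\sM Q)^{1/2}Q$ of $-C$ is nonnegative; this is neither true in general (an anticommutator of two nonnegative operators need not be $\ge 0$) nor needed. Dissipativity of $A_0$ only requires $\IM(A_0f,f)=\|P_{\sM^\perp}QA_0f\|^2\ge 0$, which you already have, and maximality follows because $W(C)\subset\overline{\dC_-}$, so $\mu^{-1}\in\rho(C)$ for $\IM\mu<0$. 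Where $\ran Q\cap\sM=\ran Q\cap\sM^\perp=\{0\}$ really enters is (i) showing $\ker C=\ker C^*=\{0\}$ and (ii) proving $\dom\Gamma_0^*=\{0\}$ for $\Gamma_0=P_{\sM^\perp}QA_0$; both use that $\ran(QP_\sM Q)^{1/2}\subset\ran Q$ together with the two trivial-intersection conditions, and your sketch does not make this step explicit. That is the one place where your outline would need to be filled in before it constitutes a proof of part (2).
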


The next statement will be used in Section \ref{nov3b}.
\begin{corollary}\label{jrnj5b}
In an infinite-dimensional separable complex Hilbert space there exist contractions $X$ such that
\begin{equation}\label{yeyjtc}
\begin{array}{l}
\ker D_X=\{0\},\;\; \ran (I-X)\cap \ran D_{X^*}=\{0\}(\Longleftrightarrow \ran (I-X^*)\cap\ran D_X=\{0\}),\\[2mm]
\ran (I-X)\cap\ran (I-X^*)=\{0\}.
\end{array}
\end{equation}
\end{corollary}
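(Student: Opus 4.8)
The plan is to obtain the desired contraction $X$ as the Cayley transform of the maximal dissipative operator $A\in\bD_{\rm sing}$ produced in Theorem \ref{jrn3a}(1), and then to translate the three properties of $A$ — namely $A\in\bD_{\rm sing}$, $\dom A\cap\dom A^*=\{0\}$, and the non-degeneracy of the imaginary part — into the three displayed conditions \eqref{yeyjtc} on $X$. Concretely, choose $L$ and $M$ bounded selfadjoint in $\cH$ with $\ker L=\ker M=\{0\}$ and $\ran M\cap\ran L=\{0\}$ (such operators exist in any infinite-dimensional separable Hilbert space, e.g. by a standard two-sided weighted-shift or diagonal-in-incompatible-bases construction), set $A:=(M-iL^2)^{-1}$ as in \eqref{yjtjkl}, and put
\[
X:=Y_i(A)=(A-iI)(A+iI)^{-1},
\]
the Cayley transform of $A$ at $\lambda=i$, which is a contraction by the discussion following \eqref{rtkbnh}.

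First I would record the dictionary between $X$ and $A$ supplied by Section \ref{disoperca}. From \eqref{thecayl} we have $\ran(I-X)=\dom A$ and $\ran(I-X^*)=\dom A^*$, and $\ker(I-X)=\ker(I-X^*)=\{0\}$ since $A$ is an (everywhere-defined, as $0\in\rho(A)$) operator. By Proposition \ref{zghbl}, applied with $\lambda=i$, the statement $A\in\bD_{\rm sing}$ is equivalent to $\ran D_{X}\cap\dom A^*=\{0\}$ and also to $\ran D_{X^*}\cap\dom A=\{0\}$; rewriting $\dom A=\ran(I-X)$ and $\dom A^*=\ran(I-X^*)$ gives exactly the second line of \eqref{yeyjtc}, in both of its equivalent forms. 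The equality $\ker D_X=\{0\}$ will follow because $A\notin\bD_{\rm sing}$ would be needed for $D_X$ to be degenerate in the relevant sense — more precisely, $\ker D_X=\ker\gamma_A$ restricted appropriately; here one uses \eqref{theccayl}, which gives $\|\psi\|^2-\|X\psi\|^2=4\,\IM(Af,f)$ with $\psi=(A+iI)f$, so $\ker D_X$ corresponds to $\sS_A$, and since $\dom A\cap\dom A^*=\{0\}$ Theorem \ref{xfcnyck}(2) forces $\sS_A=\{0\}$, i.e. $\ker D_X=\{0\}$. Finally the third line $\ran(I-X)\cap\ran(I-X^*)=\{0\}$ is literally $\dom A\cap\dom A^*=\{0\}$, which is part of the conclusion of Theorem \ref{jrn3a}(1).

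The main obstacle — really the only point needing care — is the very first bullet of \eqref{yeyjtc}, $\ker D_X=\{0\}$, because a priori the Cayley transform of a dissipative operator can be a unitary on a reducing subspace (exactly the selfadjoint part), and on that part $D_X=0$. I would handle this by invoking that $A$ from \eqref{yjtjkl} is not merely dissipative but lies in $\bD_{\rm sing}$ with $\sS_A=\dom A\cap\dom A^*=\{0\}$, so $A$ has trivial Hermitian part and hence (by Proposition \ref{sbvgk}) is a simple maximal dissipative operator with no selfadjoint direct summand; equivalently, via the decomposition of $X$ into c.n.u. and unitary parts recalled after Definition \ref{simple}, $X$ is completely nonunitary, and a c.n.u. contraction has injective defect operator $D_X$ (otherwise $\ker D_X$ would be a reducing subspace on which $X$ is isometric, hence — being finite-or-infinite-dimensional and an invariant isometric piece — unitary by the remark following Proposition \ref{sbvgk}). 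This gives $\ker D_X=\{0\}$, and the proof is complete. I would close by remarking that $X^*=Y_i(A)^*=(A^*+iI)(A^*-iI)^{-1}$ by \eqref{cghzo}, so all statements about $X^*$ transcribe to $-A^*\in\bD_{\rm sing}$ exactly as above, which is why the parenthetical equivalence in \eqref{yeyjtc} is automatic.
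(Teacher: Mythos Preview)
Your approach is the same as the paper's: take $A=(M-iL^2)^{-1}$ from Theorem \ref{jrn3a}(1) and let $X$ be its Cayley transform at $\lambda=i$. Your second paragraph is correct and complete: $\ran(I-X)=\dom A$, $\ran(I-X^*)=\dom A^*$, so the third line of \eqref{yeyjtc} is $\dom A\cap\dom A^*=\{0\}$; the middle line is Proposition \ref{zghbl}(iv),(viii); and $\ker D_X=(A+iI)\sS_A=\{0\}$ since $\sS_A\subseteq\dom A\cap\dom A^*=\{0\}$ by \eqref{kerna2}. That already proves the corollary.

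Your third paragraph, however, contains a false general claim: it is \emph{not} true that a completely nonunitary contraction has injective defect operator $D_X$. The unilateral shift $S$ is c.n.u.\ and isometric, so $D_S=0$ and $\ker D_S$ is the whole space. The remark you invoke (after Definition \ref{simple}) requires an invariant subspace on which the restriction is \emph{unitary}, not merely isometric; $\ker D_X$ is only the set where $X$ is isometric and need not be $X$-invariant. Since you already established $\ker D_X=\{0\}$ correctly in the preceding paragraph via $\sS_A=\{0\}$, simply delete the third paragraph's alternative argument.
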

\begin{proof}
Since $\cH$ is infinite-dimensional separable complex Hilbert space, then
 by von Neumann's theorem \cite{Neumann1929} for each unbounded selfadjoint operator $F$ in $\cH$ there exists
an unbounded selfadjoint operator $G$ such that $\dom F\cap\dom G=\{0\}.$ It follows that for a bounded selfadjoint operator $M$ with unclosed dense range there exists a bounded selfadjoint operator $L$ with unclosed dense range such that $\ran M\cap\ran L=\{0\}$ (see also \cite{FW}, \cite{Arl_ZAg_IEOT_2015}).

Schmüdgen \cite[Theorem 5.1]{schmud} established that

\textit{if $\cR$ is an operator range (the domain of an unbounded selfadjoint operator or a dense linear manifold, which is the range of a bounded nonnegative selfadjoint operator \cite{FW}) in $\cH$,  then there is a subspace $\sM$  such that}
\[
  \sM\cap\cR=\{0\}\quad\mbox{and}\quad \sM^\perp\cap\cR=\{0\}.
\]
Hence, for any bounded selfadjoint operator $Q$ in $\cH$ with $\ker Q=\{0\}$ and $\ran Q\ne \cH$
there exists a subspace $\sM$ such that \eqref{yektd11} holds.

It follows that one can construct maximal dissipative operators $A$ and $A_0$ given by \eqref{yjtjkl} and \eqref{Adissip}, respectively. By Theorem \ref{jrn3a} both these operators belong to the class $\bD_{\rm sing}$ and the equalities $\dom A\cap\dom A^*=\{0\}$ and $\dom A_0\cap\dom A_0=\{0\}$ hold.

Let
$
X=(A-iI)(A+i I)^{-1}
$ 
be the Cayley transform of $A$. Then Proposition \ref{zghbl} yields the equalities in \eqref{yeyjtc}.
The same properties has the Cayley transform of the operator $A_0$.
\end{proof}

\section{Operators from the class $\bD_{{\rm{sing}}}$ with non-trivial non-dense Hermitian domains} \label{nov3b}
According to Theorem \ref{xfcnyck} each maximal dissipative extension of a closed densely defined symmetric operator belongs to the class $\bD_{{\rm{sing}}}$ and if the Hermitian part of a maximal dissipative operator $T$ is non-dense and has at least one finite deficiency index, then
$T\notin \bD_{{\rm{sing}}}$. In this Section we consider the case of infinite deficiency indices of a non-densely defined symmetric operator and prove that it has maximal dissipative extensions of the class $\bD_{{\rm{sing}}}$.
Moreover, we prove the existence of maximal dissipative operators of the class $\bD_{{\rm{sing}}}$ with a non-trivial and non-dense Hermitian domain and having certain prescribed properties.
Our aim is establishing the following theorem.

\begin{theorem}\label{ceotcn} Let $\cH$ be an infinite-dimensional separable complex Hilbert space and let
$S$ be a closed non-densely defined symmetric operator in $\cH$ with infinite-dimensional deficiency indices. Then there are exist maximal dissipative extensions $T$ of $S$ in $\cH$ such that $T\in\bD_{{\rm{sing}}}$ and $S$ is the Hermitian part of $T$. 
\end{theorem}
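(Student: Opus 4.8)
The plan is to realize $T$ through Theorem~\ref{dljgec} and to force singularity via the criterion of Theorem~\ref{xfcnyck}. Fix $\lambda\in\dC_+$, write $\sL:=\cH\ominus\dom S\neq\{0\}$, let $\sN_\mu$ denote the deficiency subspaces of $S$, set $\sL_\lambda:=P_{\sN_\lambda}\sL$ as in \eqref{gjlghjcn}, and let $V_\lambda$ be the isometry \eqref{forbis} (extended to $\overline{\sL_\lambda}$). By Theorem~\ref{dljgec} the maximal dissipative extensions $T$ of $S$ with Hermitian part exactly $S$ (that is, $\sS_T=\dom S$) are precisely the operators \eqref{ljghfci} attached to a \emph{strict} contraction $M_\lambda\in\bB(\sN_\lambda,\sN_{\bar\lambda})$, equivalently one with $\ker D_{M_\lambda}=\ker D_{M_\lambda^*}=\{0\}$; and by Theorem~\ref{xfcnyck}(1)(ii) such a $T$ lies in $\bD_{\rm sing}$ if and only if
\[
\ran D_{M_\lambda^*}\cap\bigl(P_{\sN_{\bar\lambda}}-M_\lambda P_{\sN_\lambda}\bigr)\sL=\{0\},
\]
where, by \eqref{forbis}, $\bigl(P_{\sN_{\bar\lambda}}-M_\lambda P_{\sN_\lambda}\bigr)\sL=(V_\lambda-M_\lambda)\sL_\lambda$ (compare Remark~\ref{dctulf}). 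So the problem reduces to producing a strict contraction $M_\lambda:\sN_\lambda\to\sN_{\bar\lambda}$ with $\ran D_{M_\lambda^*}\cap(V_\lambda-M_\lambda)\sL_\lambda=\{0\}$.

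A preliminary observation shows why infinite deficiency is unavoidable (and re-derives the necessity part of Corollary~\ref{btcrjyx}): if $\|M_\lambda\|<1$, then $D_{M_\lambda^*}$ is boundedly invertible, $\ran D_{M_\lambda^*}=\sN_{\bar\lambda}$, and the displayed condition forces $V_\lambda=M_\lambda$ on $\sL_\lambda$, which is impossible since $V_\lambda$ is isometric and $\sL_\lambda\neq\{0\}$. Hence necessarily $\|M_\lambda\|=1$ with its norm attained nowhere, and in a separable space such an operator exists only when $\dim\sN_\lambda=\infty$ — exactly our hypothesis — in which case $D_{M_\lambda}$ and $D_{M_\lambda^*}$ have dense but non-closed range.

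The construction of $M_\lambda$ would proceed in two coupled layers. In the model situation where the semi-deficiency subspace $\sN_\lambda'$ of the compressed operator $S_0:=P_{\cdom S}\,S$ is infinite-dimensional (which holds e.g.\ whenever $\codim\dom S<\infty$): fix an orthonormal basis $\{f_n\}$ of $\overline{\sL_\lambda}$, put $g_n:=V_\lambda f_n$, choose an orthonormal system $\{g_n'\}$ inside $\sN_{\bar\lambda}\ominus\sL_{\bar\lambda}$, let the first layer be $M_\lambda f_n:=\sqrt{1-\delta_n^2}\,g_n'$ with $0<\delta_n\downarrow 0$, and define the second layer by an arbitrary strict contraction $N:\sN_\lambda\ominus\sL_\lambda\to\sN_{\bar\lambda}\ominus\cspan\{g_n'\}$ (still to be pinned down), so that $M_\lambda$ is the orthogonal sum of the two layers. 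Then $V_\lambda-M_\lambda$ is bounded below on $\overline{\sL_\lambda}$, so $\cK:=\overline{(V_\lambda-M_\lambda)\sL_\lambda}$ is a genuine subspace containing $(V_\lambda-M_\lambda)\sL_\lambda$, and one computes $\ran D_{M_\lambda^*}=\{\sum_n b_n\delta_n g_n':(b_n)\in\ell^2\}\oplus\ran D_{N^*}$; a vector of $\cK$ written in these coordinates can belong to $\ran D_{M_\lambda^*}$ only if its $\sL_{\bar\lambda}$-component lies in $\sL_{\bar\lambda}\cap\ran D_{N^*}$. Hence it suffices to choose $N$ so that $\ran D_{N^*}\cap\sL_{\bar\lambda}=\{0\}$, i.e.\ to position a dense operator range transversally to a prescribed subspace — which is precisely von Neumann's theorem in the form used in Corollary~\ref{jrnj5b}, equivalently the requirement that the Kre\u{\i}n shorted operator $[D_{N^*}^2]_{\sL_{\bar\lambda}}$ vanish. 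The remaining configurations, in which one or both semi-deficiency indices of $S_0$ are finite, are handled by variants of the same mechanism — a polar-type factorization $M_\lambda=WC$ ($W$ unitary, $C\ge 0$) together with a von Neumann positioning of $\ran(I-C)^{1/2}$ relative to $(W^*V_\lambda-C)\sN_\lambda$ when $\sL_\lambda=\sN_\lambda$, and a finer splitting of $\overline{\sL_\lambda}$ when $\codim\dom S=\infty$ — the operator-theoretic input always being the transversality of operator ranges guaranteed by von Neumann and by the Kre\u{\i}n shorted operator.

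Granting such an $M_\lambda$, Theorem~\ref{dljgec} produces a maximal dissipative $T\supseteq S$ with $\sS_T=\dom S$ (non-dense by hypothesis), and the verified equality $\ran D_{M_\lambda^*}\cap(V_\lambda-M_\lambda)\sL_\lambda=\{0\}$ together with Theorem~\ref{xfcnyck} gives $T\in\bD_{\rm sing}$ and $\sS_T=\dom T\cap\dom T^*$; thus $S$ is the Hermitian part of $T$. The main obstacle is the self-referential shape of the singularity condition: both $\ran D_{M_\lambda^*}$ and the target $(V_\lambda-M_\lambda)\sL_\lambda$ depend on $M_\lambda$, and any block-diagonal choice with $M_\lambda\sL_\lambda\subseteq\sL_{\bar\lambda}$ produces $(V_\lambda-M_\lambda)\sL_\lambda\subseteq\ran D_{M_\lambda^*}$ and therefore fails. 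Decoupling the two — by letting $M_\lambda$ carry $\sL_\lambda$ uniformly off $\sL_{\bar\lambda}$, so that $(V_\lambda-M_\lambda)\sL_\lambda$ becomes closed, and only then invoking the transversality of operator ranges — is the crux; the surrounding verifications are routine.
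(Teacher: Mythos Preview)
Your reduction via Theorems~\ref{dljgec} and~\ref{xfcnyck} to the construction of a strict contraction $M_\lambda:\sN_\lambda\to\sN_{\bar\lambda}$ with $\ran D_{M_\lambda^*}\cap(V_\lambda-M_\lambda)\sL_\lambda=\{0\}$ is correct and is exactly how the paper proceeds. The gap is in your actual construction. In your ``model situation'' you implicitly need $\dim\sN'_{\bar\lambda}\ge\dim\overline{\sL_\lambda}$ (for the orthonormal system $\{g_n'\}$ to exist), which you do not verify and which can fail when $\dim\sL=\infty$. More seriously, the step ``choose $N$ so that $\ran D_{N^*}\cap\overline{\sL_{\bar\lambda}}=\{0\}$'' is not, as you claim, a direct application of von Neumann's theorem: the defect range $\ran D_{N^*}$ is tied to $N$ by $D_{N^*}^2=I-NN^*$, so in particular $\ker N^*\subseteq\ran D_{N^*}$, forcing $N^*$ to be injective on $\overline{\sL_{\bar\lambda}}$ and putting a genuine structural constraint on $N$ beyond mere transversality of an abstract operator range to a subspace. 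One can push this through (e.g.\ write $N=U^*C$ with $U$ unitary and $0<C<I$, then use Schm\"udgen's subspace theorem to position $U\overline{\sL_{\bar\lambda}}$ transversally to $\ran(I-C^2)^{1/2}$), but this is real additional work, and your sketch for the ``remaining configurations'' is too vague to evaluate.

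The paper avoids your auxiliary operator $N$ entirely by a different two-case split. Case~1 (both $\sN'_\lambda,\sN'_{\bar\lambda}$ infinite-dimensional): set $M_\lambda=W(I-B)^{1/2}$ with $W=U\oplus\alpha\overline{V_\lambda}$ block-diagonal along $\sN'_\lambda\oplus\overline{\sL_\lambda}$, $|\alpha|=1$, $\alpha\neq1$; the point is that then $(P_{\sN_\lambda}-M_\lambda^*P_{\sN_{\bar\lambda}})\sL=(I-\bar\alpha(I-B)^{1/2})\sL_\lambda$ and the operator $I-\bar\alpha(I-B)^{1/2}$ is boundedly invertible, so the Theorem~\ref{xfcnyck}(iii) condition collapses to $\ran B^{1/2}\cap\overline{\sL_\lambda}=\{0\}$, arranged by one shorted-operator construction. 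Case~2 ($\dim\sL=\infty$): take $M_\lambda$ block-diagonal with blocks $X\overline{V_\lambda}$ on $\overline{\sL_\lambda}$ and $0$ on $\sN'_\lambda$, where $X\in\bB(\overline{\sL_{\bar\lambda}})$ is a contraction with $\ker D_X=\{0\}$ and $\ran(I-X)\cap\ran D_{X^*}=\{0\}$, supplied ready-made by Corollary~\ref{jrnj5b}; the singularity condition is then literally that hypothesis on $X$. In both cases the self-referential nature of the condition is neutralized by an algebraic device (the unimodular twist $\alpha$, respectively the pre-packaged contraction $X$) rather than by the geometric ``off-diagonal'' manoeuvre you propose, and no secondary transversality problem for an auxiliary block arises.
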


In the proof we will need properties of the \textit{shorted operator}.

\subsection{The
 shorted operator} \label{nov21g}

For every nonnegative selfadjoint bounded operator $B$ in the Hilbert space
$\cH$ and every subspace $\cK\subset \cH\;$ M.G.~Kre\u{\i}n \cite{Kr}
defined the operator $B_{\cK}$ by the relation
\[
B_{\cK}=\max\left\{\,Z\in \bB(H):\,
    0\le Z\le B, \, {\ran}(Z)\subseteq{\cK}\,\right\}.
\]
The equivalent definition is:
\begin{equation}
\label{Sh1}
 \left(B_{\cK}f, f\right)=\inf\limits_{\f\in \cK^\perp}\left\{\left(B(f + \varphi),f +
 \varphi\right)\right\},
\quad  f\in H.
\end{equation}
 The operator $B_{\cK}$ is called
the \textit{shorted operator} (see \cite{And, AT}).

It is well known (see e.g.\cite{AT,KrO}) that a bounded $B\ge 0$ can be represented by $2\times 2$ block operator matrix of the form
\begin{equation}\label{ajhvvfn}
B=\begin{bmatrix}B_{11}&B^\half_{11}Y^*B^\half_{22}\cr B^\half_{22}YB^\half_{11}&B_{22}
\end{bmatrix}:\begin{array}{l}\cK\\\oplus\\\cK^\perp \end{array}\to
\begin{array}{l}\cK\\\oplus\\\cK^\perp \end{array},
\end{equation}
where $B_{11}\in\bB(\cK)$, $B_{11}\ge 0$, $B_{22}\in\bB(\cK^\perp)$, $B_{22}\ge 0$, $Y\in\bB\left(\cran B_{11},\cran B_{22}\right)$ is a contraction.

From \eqref{Sh1} one can derive that the operators $B_\cK$ and $B_{\cK^\perp}$ are given by the block matrices
\begin{equation}\label{shormat1}
B_\cK=\begin{bmatrix}B^\half_{11}\left(I-Y^*Y\right)B^\half_{11}&0\cr
0&0\end{bmatrix},\;  B_{\cK^\perp}=\begin{bmatrix}0& 0
\cr 0& B^\half_{22}\left(I-YY^*\right)B^\half_{22}\end{bmatrix}.
\end{equation}
It is proved in \cite{Kr} that
\begin{equation}\label{rangeSh}
 {\ran}(B_{\cK})^{\half}={\ran}B^{\half}\cap{\cK}.
\end{equation}
Hence and from \eqref{shormat1}
\begin{equation}\label{nov21b}
\ran B^{\half}\cap \cK =\ran \left(B^\half_{11}D_Y^2B^\half_{11}\right)^\half=B^\half_{11}\ran D_Y,
\end{equation}
\[
 \ran B^{\half}
 \cap \cK \quad \mbox{is dense}\quad\mbox{in}\quad \cK \iff  \ran B^{\half}_{11}\cap \ker D_Y=\{0\},
\]
\[
  B_{\cK}=0 \iff  \ran B^{\half}\cap \cK=\{0\}\iff  Y\quad\mbox{is an isometry}.
\]

Suppose $\dim\cK=\dim\cK^\perp=\infty$. It was established in
\cite[Corollary 3.6]{Arl_ZAg_IEOT_2015} that a bounded nonnegative selfadjoint operator $B$ in $H$ with $\ker B=\{0\}$
satisfies  the condition
\[
\cK\cap \ran B^\half=\cK^\perp\cap\ran B^\half=\{0\}
\]
if and only if the operator $B$ with respect to decomposition $H=\cK\oplus\cK^\perp$ takes the form
\[
B=\begin{bmatrix} W^2&WU\cr U^*W&U^*U  \end{bmatrix}=\begin{bmatrix}W\cr U^*\end{bmatrix} \begin{bmatrix}W& U\end{bmatrix}:\begin{array}{l}
\cK\\\oplus\\ \cK^\perp\end{array}\to \begin{array}{l}
\cK\\\oplus\\ \cK^\perp\end{array},
\]
where $W$ is a bounded and selfadjoint nonnegative operator in $\cK$ with $\ker W=\{0\}$, $U$ is a bounded operator acting from $\cK^\perp$ into $\cK$, $\ker U=\{0\}$, $\ker U^*=\{0\}$ and
$\ran W\cap\ran U=\{0\}.$

Suppose $\ker B_{11}=\{0\}$ and $\ker B_{22}=0$.
Then  from
\[
B\begin{bmatrix}f\cr h\end{bmatrix}=\begin{bmatrix}B_{11}&B^\half_{11}Y^*B^\half_{22}\cr B^\half_{22}YB^\half_{11}&B_{22}
\end{bmatrix}\begin{bmatrix}f\cr h\end{bmatrix}
=\begin{bmatrix} B_{11}f+B^\half_{11}Y^*B^\half_{22}h\cr B^\half_{22}YB^\half_{11}f+B_{22}h\end{bmatrix},
\]
we get that the equality $B\begin{bmatrix}f\cr h\end{bmatrix}=\begin{bmatrix}0\cr 0\end{bmatrix}$ is equivalent to the system of the equalities
\[
\left\{\begin{array}{l}
B^\half_{11}f+Y^*B_{22}^\half h=0\\
YB^\half_{11}f+B_{22}^\half h=0
\end{array}\right. \Longleftrightarrow \left\{\begin{array}{l} B^\half_{11}f=-Y^*B_{22}^\half h\\
 (I-YY^*) B^\half_{22}h=0\end{array}\right. \Longleftrightarrow \left\{\begin{array}{l} B^\half_{22}h=-YB_{11}^\half f\\
 (I-Y^*Y) B^\half_{11}h=0\end{array}\right..
\]
Besides
\begin{equation}\label{nov21f}
\begin{array}{l}
B\begin{bmatrix}f\cr h\end{bmatrix}\in\cK\Longleftrightarrow B^\half_{22}h=-YB_{11}^\half f,\;\; 
B\begin{bmatrix}f\cr h\end{bmatrix}\in\cK^\perp\Longleftrightarrow  B^\half_{11}f=-Y^*B_{22}^\half h.
\end{array}
\end{equation}

\subsection{Proof of Theorem \ref{ceotcn}}
Set $\sL:=(\cdom S)^\perp$. Fix $\lambda\in \dC_+$. Due to \eqref{gjlghjcn}, the subspaces $\sN_\lambda$ and $\sN_{\bar\lambda}$ admit the decompositions
$$\sN_\lambda=\overline\sL_\lambda\oplus\sN'_\lambda,\; \sN_{\bar\lambda}=\overline\sL_{\bar\lambda}\oplus\\\sN'_{\bar\lambda},
$$
where $\sL_\mu=P_{\sN_\mu}\sL.$

It is sufficient to
consider two cases: (1) both the semi-deficiency subspaces of $S$ are infinite-dimensional, (2) the orthogonal complement to $\dom S$ is infinite-dimensional and semi-deficiency indices of $S$ are arbitrary.

\vskip 0.3 cm

 (1) $\dim\sN'_\lambda=\dim \sN'_{\bar\lambda}=\infty.$

If $\dim\sL<\infty$, then $\dim \sL_{\lambda}=\dim\sL_{\bar\lambda}=\dim\sL<\infty.$ So, in this case $\sL_{\lambda}$ and $\sL_{\bar\lambda}$ are subspaces.

If $\dim\sL=\infty$, then it is possible that $\sL_{\lambda}$ is a subspace or is unclosed linear manifold (see Theorem \ref{ivekmzy}). Let $\overline{\sL_{\lambda}}$ be the closure and let $ \overline{V_\lambda}$ be the continuation of the operator $V_\lambda$ (see \eqref{forbis}) on $\overline{\sL_{\lambda}}$. Then $ \overline{V_\lambda}$ isometrically maps
$\overline{\sL_{\lambda}}$ onto $\overline{\sL_{\bar\lambda}}$.

Because $\dim \overline{\sL_{\lambda}}\le  \dim\sN'_\lambda$, according to the arguments above, there is a \textit{nonnegative selfadjoint contraction}  $B$ in $\sN_{\lambda}$  such that
\[
\ker B=\{0\},\; \ran B^\half\cap \overline\sL_{\lambda}=\{0\},
\]
\[
B=\begin{bmatrix}B_{11}&B^\half_{11}Y^*B^\half_{22}\cr B^\half_{22}YB^\half_{11}&B_{22}\end{bmatrix}:\begin{array}{l}\overline{\sL_{\lambda}}\\\oplus\\\sN'_\lambda \end{array}\to
\begin{array}{l}\overline{\sL_{\lambda}}\\\oplus\\\sN'_\lambda \end{array}.
\]
For this purpose it is necessary to choose nonnegative $B_{11}\in\bB( \overline\sL_{\lambda})$, $B_{22}\in\bB( \sN_\lambda')$ and an isometry $Y\in\bB(\overline\sL_{\lambda},\sN_\lambda')$ such that
\[
\ker B_{11}=\{0\},\;\ker B_{22}=\{0\},\; \ran B_{22}\ne \sN_\lambda',\; \ran Y\cap\ran B^\half_{22}=\{0\}.
\]
Let $U\in\bB(\sN'_\lambda,\sN'_{\bar\lambda})$ be an arbitrary unitary operator and let
\[
W:=U\oplus \alpha \overline{V_\lambda},
\]
where $|\alpha|=1$, $\alpha\ne 1$ and $V_\lambda P_{\sN_{\lambda}}f=P_{\sN_{\bar\lambda}}f,$ $f\in\sL$ (see \eqref{forbis}). Then $W$ is unitary mapping $\sN_\lambda$ onto $\sN_{\bar\lambda}$.
Set
\begin{equation}\label{xbcnjrjy}
M_\lambda:=W(I-B)^\half.
\end{equation}
Then
$$||M_\lambda g||^2=||(I-B)^\half g||^2=\left((I-B)g,g\right)=||g||^2-||B^\half g||^2<||g||^2\;\;\forall g\in\sN_\lambda\setminus\{0\}, $$
$$M^*_{\lambda}=(I-B)^\half W^{-1},\; D_{M_\lambda}=B^\half,\; \ran D_{M_\lambda}=\ran B^\half. $$
For $f\in\sL$ we have
\[
\begin{array}{l}
\left(P_{\sN_{\lambda}}-M^*_{\lambda}P_{\sN_{\bar\lambda}}\right)f=\left(I-(I-B)^\half W^{-1}V_\lambda\right)P_{\sN_{\lambda}}f\\
=\left(I-\bar\alpha(I-B)^\half V^{-1}_\lambda V_\lambda\right)P_{\sN_{\lambda}}f=\left(I-\bar\alpha(I-B)^\half\right) P_{\sN_{\lambda}}f.
\end{array}
\]
 For all $|\alpha|=1$, $\alpha\ne 1$ the operator $I-\bar\alpha(I-B)^\half$ has bounded inverse  $\left(I-\bar\alpha(I-B)^\half\right)^{-1}.$
If for some $h\in\sN_\lambda$ and some $f\in\sL$ holds the equality
\[
D_{M_\lambda}h=\left(P_{\sN_{\lambda}}-M^*_{\lambda}P_{\sN_{\bar\lambda}}\right)f,
\]
then
\[
B^\half h=\left(I-\bar\alpha(I-B)^\half\right)P_{\sN_{\lambda}}f=\left(I-\bar\alpha(I-B)^\half\right)g,
\]
where $g=P_{\sN_{\lambda}}f\in\overline\sL_\lambda$.
Hence
\[
\left(I-\bar\alpha(I-B)^\half\right)^{-1}B^\half h=B^\half \left(I-\bar\alpha(I-B)^\half\right)^{-1}h=g.
\]
Since $\ran B^\half\cap \overline\sL_{\bar\lambda}=\{0\}$, we obtain that $h=g=0.$
Thus,
\[
\ran  D_{M_\lambda }\cap (P_{\sN_{\lambda}}-M_\lambda ^* P_{\sN_{\bar\lambda}})\sL=\{0\}.
\]
Let $M_\lambda$ be given by \eqref{xbcnjrjy} and set
\[
Y_\lambda:=(S-\lambda I)(S-\bar \lambda I)^{-1}P_{\sM_{\bar\lambda}}+M_\lambda P_{\sN_\lambda},\; T:=(\lambda I-\bar\lambda Y_\lambda)(I-Y_\lambda)^{-1}.
\]
Then $T$ is $m$-dissipative extension of $S$ and Theorem \ref{xfcnyck} yields, that $S$ is the Hermitian part of $T$ and $T\in\bD_{{\rm{sing}}}$.

\vskip 0.3 cm

(2) $\dim\sL=\infty$. 

Now, since $\dim\sL=\infty$, also $\dim\sL_{\lambda}=\dim \sL_{\bar\lambda}=\infty.$ Fix $\lambda\in\dC_+$ and choose a contraction $X\in \bB(\overline{\sL_{\bar\lambda}})$ having
the properties \eqref{yeyjtc}. Then define $M_\lambda\in\bB(\sN_\lambda,\sN_{\bar\lambda})$ as follows:
\[
M_\lambda=\begin{bmatrix}X\overline {V_\lambda}&0\cr 0&0\end{bmatrix}:\begin{array}{l}\overline\sL_\lambda\\\oplus\\\sN'_\lambda\end{array}\to
\begin{array}{l}\overline\sL_{\bar\lambda}\\\oplus\\\sN'_{\bar\lambda}\end{array}.
\]
Hence
\[
\begin{array}{l}
D_{M^*_\lambda}= D_{X^*}P_{\overline\sL_{\bar\lambda}}\oplus P_{\sN'_{\bar\lambda}},\\[2mm]
(P_{\sN_{\bar\lambda}}-M_\lambda P_{\sN_\lambda})f=(V_\lambda -XV_\lambda)P_{\sN_\lambda}f=
 (I_{\overline{\sL_{\bar\lambda}}}-X)V_\lambda P_{\sN_\lambda}f,\; f\in\sL.
\end{array}
\]
Due to the choice of $X$ (see \eqref{yeyjtc}) we conclude, that $\ran D_{M^*_\lambda}\cap(P_{\sN_{\bar\lambda}}-M_\lambda P_{\sN_\lambda})\sL=\{0\}$.

Now Theorem \ref{xfcnyck} yields that the operator $T$ given by \eqref{ljghfci} with $M_\lambda$ defined above, belongs to the class $\bD_{{\rm{sing}}}$ and the Hermitian part of $T$ coincides with $S$.

The proof is complete.

\section{Lifting of a maximal dissipative operator and squares of their Hermitian parts} 
\label{constrone}
In this Section we recall constructions from \cite{ArlCAOT2023} of closed densely defined symmetric
operators having trivial domains of their squares and consider
 other cases of squares.

\subsection{The operators $\wt\cT_z,$ $z\in\dC_+$}
Let $T$ be an unbounded maximal dissipative operator in $\cH$ and let $\{\cE,\Gamma\}$ be a boundary pair associated with $T$. 

In the Hilbert space $\sH:=\cH\oplus\cE$ for each $z\in\dC_+$ define a linear operator $\wt \cT_z$ as follows: 
\begin{equation}\label{yjdjgth1}
\wt \cT_z=\begin{bmatrix} T&0\cr 2i\sqrt{\IM z}\,\Gamma& zI_\cE\end{bmatrix}:\begin{array}{l}\cH\\\oplus\\\cE\end{array}\to \begin{array}{l}\cH\\\oplus\\\cE\end{array}
,\;\;\dom \wt \cT_z=\dom T\oplus\cE.
\end{equation}
Clearly, the domain $\dom \wt\cT_z$ does not depend on $z\in\dC_+$ and the operator $\wt\cT_z$ for each $z\in\dC_+$ is the lifting (dilation) of the operator $T$, i.e.,
\[
\dom \wt\cT_z \cap\cH=\dom T,\; T=P_\cH\wt\cT_z\uphar(\cH\cap\dom\wt\cT_z).
\]
\begin{theorem}\label{djccnfy} \cite[Theorem 4.1]{ArlCAOT2023}.
 The following statements are valid:

(1) The operator $\wt \cT_z$
is maximal dissipative,
$\rho(\wt\cT_z)=\rho(T)\setminus\{z\}$
and 
\[
\begin{array}{l}
\IM (\wt \cT_z(f\oplus e),f\oplus e)=||\Gamma f+\sqrt{\IM z}\,e||^2_\cE\;\;\forall f\in\dom T,\; \forall e\in\cE,\\[2mm]
(T-\lambda I_\cH)^{-1}=P_\cH(\wt \cT_z-\lambda I_\sH )^{-1}\uphar\cH,\;\lambda\in\rho(T)\setminus\{z\},\\[2mm]
P_\cH\exp(it\wt \cT_z)\uphar\cH=\exp(it T), t\in\dR_+, \;\;\forall z\in\dC_+;
\end{array}
\]

(2) for each pair $z_1, z_2\in\dC_+$ holds the relation
\[
\wt\cT_{z_2}=\begin{bmatrix}I_\cH&0\cr 0&\sqrt{\cfrac{\IM z_2}{\IM z_1}}\,I_\cE\end{bmatrix}\left(\wt\cT_{z_1}+\begin{bmatrix}0&0\cr 0&\cfrac{\IM(\bar z_2 z_1)}{\IM z_2}\,I_\cE\end{bmatrix}\right)\begin{bmatrix}I_\cH&0\cr 0&\sqrt{\cfrac{\IM z_2}{\IM z_1}}\,I_\cE\end{bmatrix},
\]
in particular, the operators $\wt\cT_z,\wt\cT_{tz}$ are congruent for each $z\in\dC_+$ and each $t>0,$ $t\ne 1$.

\end{theorem}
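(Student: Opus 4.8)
The plan is to verify the three assertions in (1) by exploiting the block lower-triangular form of $\wt\cT_z$ in \eqref{yjdjgth1} together with the Green identity \eqref{ghtct}, and to obtain (2) by a routine matrix identity.

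\emph{Dissipativity.} For $f\in\dom T$ and $e\in\cE$ I would first compute
\[
(\wt\cT_z(f\oplus e),f\oplus e)_\sH=(Tf,f)_\cH+2i\sqrt{\IM z}\,(\Gamma f,e)_\cE+z\|e\|_\cE^2 ,
\]
take imaginary parts, and substitute $\IM(Tf,f)=\|\Gamma f\|_\cE^2$ from \eqref{ghtct}, obtaining
\[
\IM(\wt\cT_z(f\oplus e),f\oplus e)=\|\Gamma f\|_\cE^2+2\sqrt{\IM z}\,\RE(\Gamma f,e)_\cE+\IM z\,\|e\|_\cE^2=\|\Gamma f+\sqrt{\IM z}\,e\|_\cE^2\ge0 .
\]
This gives dissipativity and the first displayed equality of (1).

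\emph{Spectrum, resolvent, maximality.} Fix $\lambda\ne z$. Solving $(\wt\cT_z-\lambda I)(f\oplus e)=g\oplus k$ is, by the triangular structure, equivalent to $(T-\lambda I)f=g$ together with $2i\sqrt{\IM z}\,\Gamma f+(z-\lambda)e=k$. If $\lambda\in\rho(T)$ then $f=(T-\lambda I)^{-1}g$ with $(T-\lambda I)^{-1}\in\bB(\cH,\cH_T^+)$, and since $\Gamma\in\bB(\cH_T^+,\cE)$ and $z-\lambda\ne0$ the second equation is uniquely solved by $e=(z-\lambda)^{-1}\bigl(k-2i\sqrt{\IM z}\,\Gamma(T-\lambda I)^{-1}g\bigr)$; hence $(\wt\cT_z-\lambda I)^{-1}\in\bB(\sH)$ and $\lambda\in\rho(\wt\cT_z)$. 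Conversely, if $\lambda\notin\rho(T)$ the first equation already obstructs surjectivity, and if $\lambda=z$ the second one imposes a range restriction; thus $\rho(\wt\cT_z)=\rho(T)\setminus\{z\}$. Since $\wt\cT_z$ is densely defined ($\dom T$ being dense) and $\dC_-\subset\rho(T)$ with $z\in\dC_+$, we get $\dC_-\subset\rho(\wt\cT_z)$, so by the standard criterion $\wt\cT_z$ is maximal dissipative (hence closed). Setting $k=0$ in the explicit solution and applying $P_\cH$ yields $P_\cH(\wt\cT_z-\lambda I)^{-1}\uphar\cH=(T-\lambda I)^{-1}$.

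\emph{Semigroup compression.} As $i\wt\cT_z$ and $iT$ generate contraction $C_0$-semigroups, I would deduce the semigroup identity from the resolvent one through Laplace transforms: for $\RE\mu>0$ one has $\int_0^\infty e^{-\mu t}\exp(it\wt\cT_z)\,dt=i(\wt\cT_z+i\mu I)^{-1}$ and likewise for $T$, with $-i\mu\in\dC_-\subset\rho(T)\setminus\{z\}$; compressing to $\cH$ and using the resolvent compression just proved shows that the two $\mu$-Laplace transforms coincide, so $P_\cH\exp(it\wt\cT_z)\uphar\cH=\exp(itT)$ for all $t\in\dR_+$ by uniqueness of Laplace transforms. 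I expect this step to be the main obstacle: expanding the triangular semigroup directly produces the Duhamel term $\int_0^t\exp\bigl(i(t-s)z\bigr)\Gamma\exp(isT)\,ds$, whose integrand need not make sense on all of $\cH$ because $\Gamma$ is defined only on $\cH_T^+$; the Laplace-transform route sidesteps this.

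\emph{The congruence (2).} Here I would conjugate $\wt\cT_{z_1}$ by the invertible self-adjoint operator acting as $I_\cH$ on $\cH$ and as $\sqrt{\IM z_2/\IM z_1}\,I_\cE$ on $\cE$: the first row is unchanged, the coupling becomes $2i\sqrt{\IM z_2}\,\Gamma$ (using $\sqrt{\IM z_2/\IM z_1}\cdot\sqrt{\IM z_1}=\sqrt{\IM z_2}$) and the $(2,2)$-entry becomes $(\IM z_2/\IM z_1)z_1$; adding $\IM(\bar z_2z_1)/\IM z_2$ on the $\cE$-component and simplifying via the elementary identity $\IM z_2\cdot z_1+\IM(\bar z_2z_1)=z_2\,\IM z_1$ (checked by separating real and imaginary parts) produces $z_2 I_\cE$, i.e. $\wt\cT_{z_2}$. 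When $z_2=tz_1$, $t>0$, then $\bar z_2z_1=t|z_1|^2\in\dR$, the additive term vanishes, and the relation reduces to a genuine congruence by the operator acting as $I_\cH$ on $\cH$ and $\sqrt{t}\,I_\cE$ on $\cE$.
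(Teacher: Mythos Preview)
Your argument is correct. The dissipativity computation, the block-triangular resolvent analysis yielding $\rho(\wt\cT_z)=\rho(T)\setminus\{z\}$ and the compression formula, the Laplace-transform passage to the semigroup identity, and the verification of the congruence via the scalar identity $\IM z_2\cdot z_1+\IM(\bar z_2 z_1)=z_2\,\IM z_1$ are all sound. Two minor points of phrasing: at $\lambda=z$ you actually have $\cE\subset\ker(\wt\cT_z-zI)$, so injectivity (not only surjectivity) fails; and in part (2) your verbal description reverses the order ``conjugate, then add'' relative to the stated formula $D(\wt\cT_{z_1}+C)D$, but since $DCD$ is again diagonal on $\cE$ your identity still gives the correct $(2,2)$-entry $(\IM z_2/\IM z_1)(z_1+c)=z_2$.

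As to comparison: the paper does not supply a proof of this theorem here; it simply cites \cite[Theorem~4.1]{ArlCAOT2023}. Your self-contained verification is therefore more than what the present paper offers, and your Laplace-transform route to the semigroup compression (avoiding the Duhamel integral whose integrand $\Gamma\exp(isT)$ is only defined on $\cH_T^+$) is a clean way to handle the one nontrivial step.
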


One can verify directly from  \eqref{yjdjgth1} that the  adjoint operator $\wt \cT^*_z$ takes the form
\begin{equation}\label{cghz11}
\left\{\begin{array}{l}
\dom \wt\cT^*_z=\left\{\begin{bmatrix} f+2i\sqrt{\IM z}(T^\times-z I_\cH)^{-1}\Gamma^\times y\cr y\end{bmatrix}:\; f\in\dom T^*,\; y\in\cE\right\},\\[3mm]
\wt\cT^*_z\begin{bmatrix} f+2i\sqrt{\IM z}\,(T^\times-z I_\cH)^{-1}\Gamma^\times y\cr y\end{bmatrix}=\begin{bmatrix} T^*f+2iz\sqrt{\IM z}\,(T^\times-z I_\cH)^{-1}\Gamma^\times y\cr\bar z\, y\end{bmatrix}
\end{array}\right..
\end{equation}

\subsection{Symmetric operators $\cT_z$, $z\in\dC_+$}
In $\sH=\cH\oplus\cE$ for each $z\in\dC_+$ define the operator $\cT_z$ as follows
\begin{equation}\label{mods}
\left\{\begin{array}{l}
\dom \cT_z=\left\{\begin{bmatrix}f\cr-\cfrac{\Gamma f}{\sqrt{\IM z}}\,\end{bmatrix}:\; f\in\dom T\right\},\\[3mm]
\cT_z\begin{bmatrix}f\cr-\cfrac{\Gamma f}{\sqrt{\IM z}}\end{bmatrix}=\begin{bmatrix}Tf\cr-\cfrac{\bar z\, \Gamma f}{\sqrt{\IM z}}\end{bmatrix},\; f\in\dom T
\end{array}\right..
\end{equation}
From \eqref{yjdjgth1} and \eqref{mods} we get the equalities
\[
\dom T=\cH\cap\dom \wt\cT_z=P_\cH\dom \cT_z,\; T(P_\cH f_{\cT_z})=P_{\cH}\cT_z f_{\cT_z},\; f_{\cT_z}\in\dom\cT_z,
\]
\[
(\dom \cT_z)^\perp=\left\{\begin{bmatrix}\cfrac{\Gamma^*\f}{\sqrt{\IM z}}\cr\f \end{bmatrix}:\f\in\dom \Gamma^*\right\}.
\]
Equalities \eqref{kerna} and \eqref{vybvfz} yield that
\begin{equation}\label{jrnm7a}
\ran (\cT_z-\bar z I_\sH)=\cH,\; \dom\cT_z\cap \cH=\ker\Gamma=\sS_T.
\end{equation}
The definition \eqref{mods} of $\cT_z$ yields that the square $\cT^2_z$ is of the form
\begin{equation}\label{domsqu}
\dom \cT^2_z 
=\left\{\begin{bmatrix}(T-\bar z I_\cH)^{-1}g\cr-\cfrac{\Gamma (T-\bar z I_\cH)^{-1}g}{\sqrt{\IM z}}\end{bmatrix}:\;g\in
\sS_T\right\}.
\end{equation}
Hence $\dom \cT_z^2=\{0\}$ if and only if $\sS_T=\{0\}.$

\begin{theorem}\label{cnzcbv} cf. \cite[Theorem 4.2]{ArlCAOT2023}.

The following statements are valid:

(1) the operator $\cT_z$  defined in \eqref{mods} is symmetric and closed and
the deficiency subspace $\sN_\lambda(\cT_z)$ of $\cT_z$ takes the form
\[
\sN_\lambda(\cT_z)=\left\{\begin{bmatrix} h\cr y\end{bmatrix}: (T^\times-\lambda I_\cH)h=\frac{z-\lambda}{\sqrt{\IM z}}\Gamma^\times y, \; h\in\cH, y\in\cE\right\},
\]
in particular,
\begin{equation}\label{zltatrn}
\sN_z(\cT_z)=\cE;
\end{equation}

(2) $\dom \cT_z$ is the Hermitian domain  and $\cT_z$ is the Hermitian part of $\wt\cT_z$; 

(3) for distinct $z_1, z_2\in\dC_+$ holds the equality
\[
\cT_{z_2}=\begin{bmatrix}I_\cH&0\cr 0&\sqrt{\cfrac{\IM z_2}{\IM z_1}}\,I_{\cE}\end{bmatrix}\left(\cT_{z_1}+\begin{bmatrix}0&0\cr 0&\cfrac{\IM(\bar z_2 z_1)}{\IM z_2}\,I_{\cE}\end{bmatrix}\right)\begin{bmatrix}I_\cH&0\cr 0&\sqrt{\cfrac{\IM z_2}{\IM z_1}}\,I_{\cE}\end{bmatrix};
\]

(4) the operator $\cT_z$ is densely defined if and only if $T\in\bD_{\rm sing}$
and if this is case, then the adjoint operator $\cT^*_z$ is of the form
\begin{equation}\label{cjhzjg}
\left\{\begin{array}{l}
\dom \cT^*_z=\left\{\begin{bmatrix}f+2i\sqrt{\IM z}(T^\times-z I_\cH)^{-1}\Gamma^\times y\cr y+w\end{bmatrix}:\; f\in\dom T^*,\;y,w\in\cE\right\},\\[2mm]
\cT^*_z\begin{bmatrix}f+2i\sqrt{\IM z}(T^\times-z I_\cH)^{-1}\Gamma^\times y\cr y+w\end{bmatrix}=\begin{bmatrix}
T^*f+2iz\sqrt{\IM z}(T^\times-z I_\cH)^{-1}\Gamma^\times y\cr\bar z\, y+z w\end{bmatrix},
\end{array}\right.
\end{equation}
moreover, the domain $\dom \cT^*_z$ does not depend on $z\in\dC_+$; 

(5) the operator $\cT_z$ is simple if and only if the operator $T$ is simple;

(6)
assume $T\in\bD_{\rm sing}$, then
\begin{enumerate}
\item [\rm (I)]
the following are equivalent:
\begin{enumerate}
\def\labelenumi{\rm (\roman{enumi})}
\item [\rm (i)] $\dom \cT_z^2=\{0\}$,
\item [\rm (ii)] $\dom\cT_{z_1}\cap\dom\cT_{z_2}=\{0\}$ for some (then for any) $z_1, z_2\in\dC^+$ such that \\$\IM z_1\ne\IM z_2,$
\item [\rm (iii)]
$\dom T\cap\dom T^*=\{0\};$
\end{enumerate}
\item  [\rm (II)]
the following are equivalent:
\begin{enumerate}
\def\labelenumi{\rm (\roman{enumi})}
\item $\sS_T$ is dense in $\cH$,
\item $\dom\cT_z^2$ is a core of $\cT_z$.
\end{enumerate}
\end{enumerate}
\end{theorem}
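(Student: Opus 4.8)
The plan is to read off (1)--(5) from the block form \eqref{mods} — these run parallel to \cite[Theorem~4.2]{ArlCAOT2023} — and to prove (6), which is the new content, using Lemma~\ref{dbltnm} together with the identifications \eqref{domsqu}--\eqref{jrnm7a}. For (1) I would first check symmetry by computing $\IM(\cT_z u, u)$ for a generic $u=\begin{bmatrix}f\\-\Gamma f/\sqrt{\IM z}\end{bmatrix}$ and invoking the Green identity \eqref{ghtct} in the form $\IM(Tf,f)=\|\Gamma f\|_\cE^2$; closedness is immediate because $\Gamma$ is bounded on $\cH_T^+$, so the parametrization $f\mapsto u$ is a graph-norm homeomorphism of $\dom T$ onto $\dom\cT_z$. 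The deficiency subspace is then obtained by writing out $\begin{bmatrix}h\\y\end{bmatrix}\perp(\cT_z-\bar\lambda I)u$ for all $f\in\dom T$ and passing to the $\cH_T^+$--$\cH_T^-$ pairing via $\Gamma^\times$ and $T^\times$; for $\lambda=z$ the $\cE$-component of $(\cT_z-\bar zI)u$ vanishes identically and $\ran(T-\bar zI)=\cH$, which forces $\sN_z(\cT_z)=\cE$. For (2), Theorem~\ref{djccnfy}(1) gives $\IM(\wt\cT_z(f\oplus e),f\oplus e)=\|\Gamma f+\sqrt{\IM z}\,e\|_\cE^2$, which vanishes exactly on $\dom\cT_z$, so $\sS_{\wt\cT_z}=\dom\cT_z$; plugging $e=-\Gamma f/\sqrt{\IM z}$ into \eqref{yjdjgth1} and simplifying (using $2i\sqrt{\IM z}-z/\sqrt{\IM z}=-\bar z/\sqrt{\IM z}$) shows $\wt\cT_z\uphar\sS_{\wt\cT_z}=\cT_z$, and (3) then follows by restricting the congruence of the operators $\wt\cT_z$ in Theorem~\ref{djccnfy}(2) to their Hermitian domains.

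For (4), the orthogonality relation defining $(\dom\cT_z)^\perp$ shows it equals $\{\begin{bmatrix}\Gamma^*\f/\sqrt{\IM z}\\ \f\end{bmatrix}:\f\in\dom\Gamma^*\}$, so $\cT_z$ is densely defined iff $\dom\Gamma^*=\{0\}$, i.e.\ iff $T\in\bD_{\rm sing}$ by Proposition~\ref{zghbl}; assuming this, I would verify \eqref{cjhzjg} by sandwiching: $\cT_z\subseteq\wt\cT_z$ gives $\wt\cT_z^*\subseteq\cT_z^*$ (so the right-hand side of \eqref{cghz11} sits inside $\dom\cT_z^*$), adjoining the directions $0\oplus w$ from $\sN_z(\cT_z)=\cE\subseteq\dom\cT_z^*$ (on which $\cT_z^*$ acts as $z$) gives ``$\supseteq$'', and an explicit adjointness check gives ``$\subseteq$'' and the $z$-independence of $\dom\cT_z^*$. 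For (5) I would use Proposition~\ref{sbvgk} applied to $\wt\cT_z$, whose Hermitian part is $\cT_z$ by (2); it then remains to show $\wt\cT_z$ is simple iff $T$ is. Writing $T=T^{(0)}\oplus T^{(1)}$ for its simple and selfadjoint parts, the Green identity kills $\Gamma$ on $\cH^{(1)}$, so $\wt\cT_z=\wt\cT_z^{(0)}\oplus T^{(1)}$ with $\wt\cT_z^{(0)}$ the lifting of $T^{(0)}$, and one checks (e.g.\ through its Cayley transform) that $\wt\cT_z^{(0)}$ is simple.

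For (6): from \eqref{domsqu}, a vector $u=\begin{bmatrix}f\\-\Gamma f/\sqrt{\IM z}\end{bmatrix}$ lies in $\dom\cT_z^2$ iff $(T-\bar zI)f\in\ker\Gamma=\sS_T$, and since $\bar z\in\rho(T)$ the map $g\mapsto u$ with $f=(T-\bar zI)^{-1}g$ identifies $\dom\cT_z^2$ with $\sS_T$; similarly $\dom\cT_{z_1}\cap\dom\cT_{z_2}=\sS_T\oplus\{0\}$ whenever $\IM z_1\ne\IM z_2$. When $T\in\bD_{\rm sing}$, Theorem~\ref{xfcnyck}(2) gives $\sS_T=\dom T\cap\dom T^*$, which yields (I). For (II) I would apply Lemma~\ref{dbltnm}(2) to the closed symmetric operator $\cT_z$ at the regular-type point $\bar z$: $\dom\cT_z^2$ is a core of $\cT_z$ iff $\dom\cT_z\cap\ran(\cT_z-\bar zI)$ is dense in $\ran(\cT_z-\bar zI)$; by \eqref{jrnm7a} this range is $\cH\,(=\cH\oplus\{0\}\subseteq\sH)$ and $\dom\cT_z\cap\cH=\sS_T$, so the density condition is precisely ``$\sS_T$ dense in $\cH$''.

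The computations behind (1) and (6) are short, and (6)(II) in particular reduces to one application of Lemma~\ref{dbltnm}. The genuinely delicate points will be the two-sided verification of the adjoint formula \eqref{cjhzjg} in (4) and the implication ``$T$ simple $\Rightarrow\wt\cT_z$ simple'' in (5), where one has to rule out nontrivial selfadjoint reducing subspaces of the lifting $\wt\cT_z$.
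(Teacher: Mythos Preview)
Your plan for (1)--(3), (4) up to the adjoint formula, and (6) matches the paper's proof essentially line for line; in particular (6)(II) is exactly one application of Lemma~\ref{dbltnm}(2) with the identifications \eqref{jrnm7a}, as you say.

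Two places where the paper is cleaner than your sketch:

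\textbf{(4), the adjoint formula.} You propose to get ``$\supseteq$'' by $\wt\cT_z^*\subseteq\cT_z^*$ together with $\sN_z(\cT_z)=\cE$, and then to run a separate ``explicit adjointness check'' for ``$\subseteq$''. The paper avoids the second step entirely: since $\wt\cT_z$ is precisely the Shtraus extension of $\cT_z$ (Remark~\ref{jrnj5a}), the von Neumann decomposition gives at once
\[
\dom\cT_z^*=\dom\wt\cT_z^*\dot+\ker(\cT_z^*-zI)=\dom\wt\cT_z^*\dot+\cE,
\]
and \eqref{cjhzjg} then drops out of \eqref{cghz11}. The $z$-independence of $\dom\cT_z^*$ is done in the paper by an explicit change of parameters $(f_1,y_1,w_1)\mapsto(f_2,y_2,w_2)$ between two values $z_1,z_2\in\dC_+$, using the resolvent identity for $(T^\times-zI)^{-1}$; your ``explicit check'' presumably means the same thing.

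\textbf{(5).} Your route through Proposition~\ref{sbvgk} and the lifting $\wt\cT_z$ is logically fine but circuitous: after the reduction you still owe ``$T$ simple $\Rightarrow\wt\cT_z$ simple'', which is not visibly easier than the original claim, and ``one checks via the Cayley transform'' is where the real work would sit. The paper bypasses $\wt\cT_z$ altogether and argues directly on $\cT_z$. If $\cT_z$ has a selfadjoint reducing part $\cT_z^{(1)}$ on a subspace $\cK^{(1)}$, then $\bar z\in\rho(\cT_z^{(1)})$ and hence
\[
\cK^{(1)}=(\cT_z^{(1)}-\bar zI)\dom\cT_z^{(1)}\subset(\cT_z-\bar zI)\dom\cT_z=\cH,
\]
so $\cK^{(1)}\subset\cH$ and $T\uphar\cK^{(1)}=\cT_z^{(1)}$ is selfadjoint, contradicting simplicity of $T$. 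The converse (``$T$ not simple $\Rightarrow\cT_z$ not simple'') is your own observation that $\Gamma$ vanishes on the selfadjoint part $\cH^{(1)}\subseteq\sS_T$, so $\cH^{(1)}\subset\dom\cT_z$ and $\cT_z\uphar\cH^{(1)}$ is selfadjoint. This direct argument is a two-line replacement for the step you flagged as ``genuinely delicate''.
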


\begin{proof}
Assertion (1)--(4) (except \eqref{cjhzjg}) and equivalences in (6)(I) are proved in  \cite[Theorem 4.2]{ArlCAOT2023}.

The description of $\cT_z^*$ in \eqref{cjhzjg} follow from \eqref{mods},
\eqref{cghz11} and the equalities
\[
\ker (\cT^*_z-z I)=\ker(\wt \cT_z-zI)=\cE,\;
\dom \cT^*_z=\dom\wt\cT^*_z\dot+\ker (\cT^*_z-z I).
\]

Let us show that $\dom\cT^*_z$ does not depend on $z$. Let two distinct complex numbers $z_1, z_2\in\dC_+$ be given.
Let $f_1\in\dom T^*$, $y_1, w_1\in\cE.$  Set
\[
\begin{array}{l}
y_2=\sqrt{\cfrac{\IM z_1}{\IM z_2}}\;\,y_1,\;w_2=y_1-y_2+w_1,\\
\f:= 2i\sqrt{\IM z_1}\,y_1=2i\sqrt{\IM z_2}\,y_2,\; \\
f_2=f_1+\left((T^\times-z_1 I_\cH)^{-1}\Gamma^\times \f-(T^\times-z_2 I_\cH)^{-1}\Gamma^\times \f\right)\\
\qquad\qquad=f_1-(z_1-z_2)(T^*-z_1 I_\cH)^{-1}(T^\times-z_2 I_\cH)^{-1}\Gamma^\times \f.
\end{array}
\]
Then $f_2\in\dom T^*$ and
\[
f_1+2i\sqrt{\IM z_1}(T^\times-z_1 I_\cH)^{-1}\Gamma^\times y_1=f_2+2i\sqrt{\IM z_2}(T^\times-z_2 I_\cH)^{-1}\Gamma^\times y_2.
\]
and $y_1+w_1=y_2+w_2$.  Now from \eqref{cjhzjg} it follows that $\dom\cT^*_{z_1}=\dom\cT^*_{z_2}.$

Suppose that the operator $\cT_z$ is not simple. Then $\cT_z=\cT_z^{(0)}\oplus\cT_z^{(1)}$, where $\cT_z^{(1)}$ is selfadjoint in a subspace $\cK^{(1)}\subset\dom\cT_z$.
From \eqref{mods}
 $\dom T=P_{\cH}\dom \cT_z$ and $(\cT_z-\bar z I)\dom \cT_z=\cH$. Hence
$\cK^{(1)}\subset\cH$ and $T\uphar\cK^{(1)}=\cT_z^{(1)}$. So $T$ is not simple

Now assume that $T$ is not simple,
\[
T=T^{(0)}\oplus T^{(1)},
\]
where $T^{(1)}$ is a selfadjoint operator in the subspace $\cH^{(1)}\subseteq\sS_T\subset\cH$.
 From \eqref{mods} it follows that $\cH^{(1)}\subset \dom \cT_z$ and $\cT_z\uphar\cH^{(1)}$ is a selfadjoint operator. Therefore $\cT_z$ is not simple.

The equivalences in (6)(II) follow from Lemma \ref{dbltnm} and equalities
 \eqref{jrnm7a}.

\end{proof}

\begin{remark}
\label{jrnj5a}
(1) Due to \eqref{zltatrn} the maximal dissipative operator $\wt\cT_z$ is the Shtraus extension (see \eqref{slamb}) of the symmetric operator $\cT_z$, i.e.,
\[
\dom\wt\cT_z=\dom \cT_z\dot+\cE,\; \wt\cT_z\uphar\dom\cT_z=\cT_z,\wt\cT_z\uphar\cE=zI_\cE.
\]
Since $\cH=\ran (\cT_z-\bar z I_\sH)$, the operator $T$ is the compression of $\wt\cT_z$ of the form \eqref{al}.

(2) From  \eqref{ctyn29a}, \eqref{orn11}, \eqref{ufvvfl}, \eqref{cghz11}, and \eqref{cjhzjg} we conclude that
\begin{equation}\label{jrn29b}
\cL_T=P_\cH\dom \wt\cT_z^*=P_{\cH}\dom \cT^*_z\;\;\forall z\in\dC_+.
\end{equation}
\end{remark}

\begin{proposition}\label{ahblajhv}
Let the operator $\cT_z$ be given by \eqref{mods}. Assume, in addition, that the operator $T$ is accretive, then $\cT_z$ is nonnegative for $\RE z\ge 0$ and for $z$ such that $\arg z\in (\pi/2,\pi-\alpha]$ in the case when
$T$, in addition, is $\alpha$-sectorial ($\alpha\in (0,\pi/2))$. 

\end{proposition}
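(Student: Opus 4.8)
The plan is to compute $\IM(\cT_z u, u)$ for $u \in \dom\cT_z$ directly from the definition \eqref{mods} and show it is nonnegative (in fact equals $\RE(\cT_z u,u)$ up to a harmless term, so that $(\cT_z u,u)$ is a nonnegative real number). Write a typical element of $\dom\cT_z$ as $u = \begin{bmatrix} f \\ -\Gamma f/\sqrt{\IM z}\end{bmatrix}$ with $f\in\dom T$. Then from \eqref{mods},
\[
(\cT_z u, u) = (Tf,f) - \frac{\bar z}{\IM z}\,\|\Gamma f\|_\cE^2 .
\]
Using the Green identity in the form \eqref{ghtct}, namely $\IM(Tf,f) = \|\Gamma f\|_\cE^2$, I would substitute to get
\[
(\cT_z u, u) = (Tf,f) - \frac{\bar z}{\IM z}\,\IM(Tf,f) = \RE(Tf,f) + i\,\IM(Tf,f) - \frac{\RE z - i\,\IM z}{\IM z}\,\IM(Tf,f),
\]
and the imaginary parts cancel, leaving
\[
(\cT_z u, u) = \RE(Tf,f) - \frac{\RE z}{\IM z}\,\IM(Tf,f) = \RE(Tf,f) - \frac{\RE z}{\IM z}\,\|\Gamma f\|_\cE^2 .
\]
In particular $(\cT_z u,u)$ is real, so it suffices to show it is $\ge 0$.

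Now I would split into the two cases of the statement. If $\RE z \ge 0$ and $T$ is accretive, then $\RE(Tf,f) \ge 0$; but we still must control the subtracted term $\frac{\RE z}{\IM z}\|\Gamma f\|_\cE^2$. Here the crucial point is $\arg z$: for $\RE z\ge 0$ we have $\arg z \in (0,\pi/2]$. For accretive $T$ the numerical range of $T$ lies in the closed right half-plane, so $\RE(Tf,f)\ge 0$ while $\IM(Tf,f)$ can have either sign; when $\arg z = \pi/2$ (i.e. $\RE z = 0$) the subtracted term vanishes and we are done, but for $\RE z > 0$ we genuinely need more. I expect the intended reading is that accretivity alone gives nonnegativity precisely when $\RE z = 0$, and the $\alpha$-sectorial hypothesis is what makes $\RE z > 0$ (and the range $\arg z\in(\pi/2,\pi-\alpha]$, i.e. $\RE z<0$) work. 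So the honest structure is: for $\alpha$-sectorial $T$ we have $|\IM(Tf,f)| \le \tan\alpha\,\RE(Tf,f)$, hence
\[
(\cT_z u,u) = \RE(Tf,f) - \frac{\RE z}{\IM z}\,\IM(Tf,f) \ge \RE(Tf,f)\Bigl(1 - \frac{|\RE z|}{\IM z}\tan\alpha\Bigr),
\]
and the parenthesis is $\ge 0$ exactly when $\IM z \ge |\RE z|\tan\alpha$, which is the condition $\arg z \in [\alpha,\pi-\alpha]$ on $|\arg z|$; intersecting with $\RE z\le 0$ gives $\arg z\in[\pi/2,\pi-\alpha]$ as claimed, and intersecting with $\RE z\ge 0$ recovers the accretive endpoint $\RE z = 0$ together with the sectorial range $\arg z\in[\alpha,\pi/2]$.

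The main obstacle is reconciling the precise quantifiers in the statement with the computation — in particular checking that "accretive $\Rightarrow$ nonnegative for $\RE z\ge 0$" is meant as the boundary case $\RE z=0$ (where the $\Gamma$-term drops out) and that the genuine interior statement requires the sectorial bound. Once the identity $(\cT_z u,u)=\RE(Tf,f)-\frac{\RE z}{\IM z}\|\Gamma f\|_\cE^2$ is established via \eqref{ghtct}, everything else is an elementary estimate using the definition of $\alpha$-sectoriality quoted before Theorem \ref{al2}. I would close by noting that $\cT_z$ is symmetric and closed by Theorem \ref{cnzcbv}(1), so "$(\cT_z u,u)\ge 0$ for all $u\in\dom\cT_z$" is precisely the assertion that $\cT_z$ is nonnegative.
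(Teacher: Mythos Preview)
Your overall strategy --- compute $(\cT_z u,u)$ directly from \eqref{mods} and use \eqref{ghtct} --- is exactly the paper's approach. But there is a sign error in your first displayed equation that propagates through the whole argument and is the source of your confusion.

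With $u=\begin{bmatrix}f\\ -\Gamma f/\sqrt{\IM z}\end{bmatrix}$ and $\cT_z u=\begin{bmatrix}Tf\\ -\bar z\,\Gamma f/\sqrt{\IM z}\end{bmatrix}$, the $\cE$-component of the inner product is $\bigl(-\bar z\,\Gamma f/\sqrt{\IM z},\,-\Gamma f/\sqrt{\IM z}\bigr)_\cE=+\dfrac{\bar z}{\IM z}\|\Gamma f\|_\cE^2$, not $-\dfrac{\bar z}{\IM z}\|\Gamma f\|_\cE^2$. Hence, after the cancellation of imaginary parts via \eqref{ghtct},
\[
(\cT_z u,u)=\RE(Tf,f)+\frac{\RE z}{\IM z}\,\|\Gamma f\|_\cE^2,
\]
with a plus sign. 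This immediately dissolves the ``obstacle'' you flag: for accretive $T$ and $\RE z\ge 0$ both summands are nonnegative (recall $T$ is maximal dissipative throughout Section~\ref{constrone}, so $\|\Gamma f\|_\cE^2=\IM(Tf,f)\ge 0$), and nonnegativity of $\cT_z$ follows for the full range $\RE z\ge 0$, not just the boundary $\RE z=0$.

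For the sectorial case one has $\RE z<0$, so the second term is negative; writing $\frac{\RE z}{\IM z}\|\Gamma f\|_\cE^2=\frac{\RE z}{\IM z}\IM(Tf,f)\ge \frac{\RE z}{\IM z}\tan\alpha\,\RE(Tf,f)$ (using $0\le\IM(Tf,f)\le\tan\alpha\,\RE(Tf,f)$ and $\RE z/\IM z<0$) gives $(\cT_z u,u)\ge\RE(Tf,f)\bigl(1+\frac{\RE z}{\IM z}\tan\alpha\bigr)$, and $1+\frac{\RE z}{\IM z}\tan\alpha\ge 0$ is precisely $-\IM z/\RE z\ge\tan\alpha$, i.e.\ $\arg z\le\pi-\alpha$. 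This is the paper's computation. Your estimate with $|\RE z|$ produced the spurious lower bound $\arg z\ge\alpha$ because of the same sign error; with the correct sign the accretive hypothesis already covers all of $\arg z\in(0,\pi/2]$.
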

\begin{proof}
If $T$ is accretive, then from \eqref{mods} and \eqref{ghtct} for $\RE z\ge 0$ we get
\[
\left(\cT_z\begin{bmatrix}f\cr-\cfrac{\Gamma f}{\sqrt{\IM z}}\end{bmatrix}, \begin{bmatrix}f\cr-\cfrac{\Gamma f}{\sqrt{\IM z}}\end{bmatrix}\right)=
(Tf,f)+\cfrac{\bar z}{\IM z}\,||\Gamma f||^2_\cE=\RE (Tf,f)+\cfrac{\RE z}{\IM z}\,||\Gamma f||^2_\cE\ge 0.
\]

Suppose that $T$ is a maximal sectorial with vertex at the origin and the semi-angle $\alpha$, i.e.,
\[
0\le\IM(Tf,f)=||\Gamma f||^2_\cE\le  \tan\alpha\,\RE (Tf,f)\;\;\forall f\in\dom T.
\]
If $\arg z\in (\pi/2,\pi-\alpha]$, then
\[
\RE z<0,\;
-\cfrac{\IM z}{\RE z}\ge \tan \alpha
\]
and from \eqref{mods} for any $f\in\dom T$ we have
\[
\begin{array}{l}
\left(\cT_z\begin{bmatrix}f\cr-\cfrac{\Gamma f}{\sqrt{\IM z}}\end{bmatrix}, \begin{bmatrix}f\cr-\cfrac{\Gamma f}{\sqrt{\IM z}}\end{bmatrix}\right)
=-\cfrac{\RE z}{\IM z}\left(-\cfrac{\IM z}{\RE z}\,\RE (Tf,f)-\IM (Tf,f)\right)\\
\ge-\cfrac{\RE z}{\IM z}\left(\tan\alpha\,\RE (Tf,f)-\IM (Tf,f)\right)\ge 0.
\end{array}
\]
Thus, the symmetric operator $\cT_z$ is nonnegative.

\end{proof}

Using \eqref{mods} and applying Theorem \ref{cnzcbv} to the maximal dissipative operators $A$ and $A_0$, describing in Theorem \ref{jrn3a}, we arrive at the next two assertions, see \cite{ArlCAOT2023}.
\begin{theorem} \label{totjl} \cite[Theorem 6.4]{ArlCAOT2023}.
Let $\cH$ be an infinite-dimensional Hilbert space. Assume that bounded selfadjoint operators $L$ and $M$ in $\cH$ satisfy conditions \eqref{jgthfnkv} and let the operator $A$ be given by \eqref{yjtjkl}.
Define for $z\in\dC_+$ the following linear operator in the Hilbert space $\bH:=\cH\oplus\cH$
\[
\left\{\begin{array}{l}
\dom\cA_z=\left\{\begin{bmatrix}f\cr -\cfrac{LAf}{\sqrt{\IM z}}  \end{bmatrix}: f\in\dom A\right\},\\[3mm]
\cA_z\begin{bmatrix}f\cr -\cfrac{LA f}{\sqrt{\IM z}}  \end{bmatrix}=\begin{bmatrix}Af\cr - \cfrac{\bar z LA f}{\sqrt{\IM z}}  \end{bmatrix},\;
f\in\dom A.
\end{array}\right.
\]
Then $\cA_z$ is densely defined and closed symmetric operator,
$0\in\wh\rho(\cA_z)$,
and
for distinct $z_1, z_2\in\dC_+$ holds the equality
\[
\cA_{z_2}=\begin{bmatrix}I_\cH&0\cr 0&\sqrt{\cfrac{\IM z_2}{\IM z_1}}\,I_{\cH}\end{bmatrix}\left(\cA_{z_1}+\begin{bmatrix}0&0\cr 0&\cfrac{\IM(\bar z_2 z_1)}{\IM z_2}\,I_{\cH}\end{bmatrix}\right)\begin{bmatrix}I_\cH&0\cr 0&\sqrt{\cfrac{\IM z_2}{\IM z_1}}\,I_{\cH}\end{bmatrix}.
\]
Moreover,
\begin{enumerate}
\item $\dom \cA_z^2=\{0\},$
\item $\dom\cA_{z_1}\cap\dom\cA_{z_2}=\{0\}$ for any $z_1, z_2\in\dC_+$ such that $\IM z_1\ne\IM z_2.$
\end{enumerate}
\end{theorem}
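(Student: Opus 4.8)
The plan is to recognize $\cA_z$ as a particular instance of the abstract operator $\cT_z$ from \eqref{mods}, and then simply read off all the claimed properties from Theorem \ref{jrn3a}, Theorem \ref{djccnfy}, and Theorem \ref{cnzcbv}. So this statement is, in effect, a corollary of the machinery already assembled; the only real work is bookkeeping of the specialization.

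First I would invoke Theorem \ref{jrn3a}(1): under \eqref{jgthfnkv} the operator $A=(M-iL^2)^{-1}$ belongs to $\bD_{\rm sing}$, satisfies $0\in\rho(A)$ and $\dom A\cap\dom A^*=\{0\}$, and $\{\cH,LA\}$ is a boundary pair for $A$. Setting $T:=A$, $\cE:=\cH$, $\Gamma:=LA$ in \eqref{mods}, the operator $\cT_z$ acting in $\sH=\cH\oplus\cH$ coincides verbatim with $\cA_z$ (the entries $-\tfrac{\Gamma f}{\sqrt{\IM z}}$ and $-\tfrac{\bar z\,\Gamma f}{\sqrt{\IM z}}$ become $-\tfrac{LAf}{\sqrt{\IM z}}$ and $-\tfrac{\bar z\,LAf}{\sqrt{\IM z}}$). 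Hence everything proved in general about $\cT_z$ applies to $\cA_z$.

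Next, from Theorem \ref{cnzcbv}(1) the operator $\cA_z=\cT_z$ is symmetric and closed; from Theorem \ref{cnzcbv}(4), since $T=A\in\bD_{\rm sing}$, it is densely defined and $\cA_z^*$ is given by the formula quoted there; and the congruence relation for distinct $z_1,z_2\in\dC_+$ is precisely Theorem \ref{cnzcbv}(3) specialized to $\cE=\cH$. For the regularity of the point $0$, I would use that $\cT_z$ is the Hermitian part of the maximal dissipative lifting $\wt\cT_z$ and that, by Theorem \ref{djccnfy}(1), $\rho(\wt\cT_z)=\rho(T)\setminus\{z\}\ni 0$ (because $0\in\rho(A)$ and $z\ne 0$, as $\IM z>0$). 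Since $\cT_z\subset\wt\cT_z$ and $\wt\cT_z$ is boundedly invertible, $\|\cT_z u\|=\|\wt\cT_z u\|\ge\|\wt\cT_z^{-1}\|^{-1}\|u\|$ for all $u\in\dom\cT_z$, so $0\in\wh\rho(\cA_z)$. (Alternatively one gets the same estimate directly: $\|\cA_z u\|^2=\|Af\|^2+\tfrac{|z|^2}{\IM z}\|LAf\|^2\ge\min(\|A^{-1}\|^{-2},|z|^2)\,\|u\|^2$.)

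Finally, the two ``moreover'' assertions follow from Theorem \ref{cnzcbv}(6)(I): the equivalence (i)$\Leftrightarrow$(iii) yields $\dom\cA_z^2=\{0\}$ because $\dom A\cap\dom A^*=\{0\}$ by Theorem \ref{jrn3a}, and then the equivalence (i)$\Leftrightarrow$(ii) yields $\dom\cA_{z_1}\cap\dom\cA_{z_2}=\{0\}$ whenever $\IM z_1\ne\IM z_2$. I do not expect any genuine obstacle: the one point warranting care is the clean identification $\cA_z=\cT_z$ — i.e., verifying that the boundary operator $\Gamma=LA$ supplied by Theorem \ref{jrn3a} is exactly the operator entering the defining formula of $\cA_z$ — together with the short argument placing $0$ in $\wh\rho(\cA_z)$, which Theorem \ref{cnzcbv} does not state.
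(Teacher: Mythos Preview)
Your proposal is correct and follows exactly the approach the paper indicates: the paper itself merely says ``Using \eqref{mods} and applying Theorem \ref{cnzcbv} to the maximal dissipative operators $A$ and $A_0$, describing in Theorem \ref{jrn3a}, we arrive at the next two assertions,'' and your write-up fills in precisely this specialization (including the small extra step for $0\in\wh\rho(\cA_z)$, which the paper leaves implicit).
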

\begin{theorem} \label{abstexam} \cite[Theorem 6.5]{ArlCAOT2023}.
Let the operator $A_0$ in $\cH$ be defined by
\eqref{Adissip}, where $Q$ is a bounded selfadjoint operator in the Hilbert space $\cH$,  $\ker Q=\{0\}$, $\ran Q\ne \cH$ and $\sM$ be a proper subspace in $\sH$ such that
$\ran Q\cap\sM= \ran Q \cap\sM^\perp=\{0\}$ (see \eqref{yektd11}).

Define for $z\in\dC_+$ the following linear operator in the Hilbert space $\sH:=\cH\oplus\sM^\perp$
\[
\left\{\begin{array}{l}
\dom\cS_z=\left\{\begin{bmatrix}f\cr -\cfrac{P_{\sM^\perp} QA_0 f}{\sqrt{\IM z}}  \end{bmatrix}: f\in\dom A_0\right\},\\[3mm]
\cS_z\begin{bmatrix}f\cr -\cfrac{P_{\sM^\perp} QA_0 f}{\sqrt{\IM z}}  \end{bmatrix}=\begin{bmatrix}A_0f\cr -\cfrac{\bar z P_{\sM^\perp} QA_0 f}{\sqrt{\IM z}}  \end{bmatrix},\;
f\in\dom A_0
\end{array}\right..
\]
Then
\begin{enumerate}
\item the operator $\cS_z$ is densely defined and closed symmetric;
\item $0\in\wh\rho(\cS_z)$;
\item for distinct $z_1, z_2\in\dC_+$ holds the equality
\[
\cS_{z_2}=\begin{bmatrix}I_\cH&0\cr 0&\sqrt{\cfrac{\IM z_2}{\IM z_1}}\,I_{\sM^\perp}\end{bmatrix}\left(\cS_{z_1}+\begin{bmatrix}0&0\cr 0&\cfrac{\IM(\bar z_2 z_1)}{\IM z_2}\,I_{\sM^\perp}\end{bmatrix}\right)\begin{bmatrix}I_\cH&0\cr 0&\sqrt{\cfrac{\IM z_2}{\IM z_1}}\,I_{\sM^\perp}\end{bmatrix};
\]
\item $\dom \cS_z ^2=\{0\};$
\item $\dom\cS_{z_1}\cap\dom\cS_{z_2}=\{0\}$  for any $z_1, z_2\in\dC_+$ such that $\IM z_1\ne\IM z_2.$
\end{enumerate}
\end{theorem}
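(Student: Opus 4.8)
The plan is to observe that $\cS_z$ is nothing but the symmetric operator $\cT_z$ of \eqref{mods} built over the maximal dissipative operator $A_0$, and then to quote Theorem~\ref{cnzcbv}. First I would invoke Theorem~\ref{jrn3a}(2): since $Q$ is a bounded selfadjoint operator with $\ker Q=\{0\}$, $\ran Q\ne\cH$, and $\sM$ satisfies \eqref{yektd11}, the operator $A_0$ defined by \eqref{Adissip} is unbounded maximal dissipative, has a bounded inverse (hence $0\in\rho(A_0)$), belongs to $\bD_{\rm sing}$, satisfies $\dom A_0\cap\dom A_0^*=\{0\}$, and $\{\sM^\perp,\,P_{\sM^\perp}QA_0\}$ is a boundary pair of $A_0$. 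Taking $T=A_0$, $\cE=\sM^\perp$ and $\Gamma=P_{\sM^\perp}QA_0$ in the construction of Section~\ref{constrone}, the operator $\cS_z$ of the statement coincides verbatim with $\cT_z$ of \eqref{mods}, and $\wt\cT_z$ of \eqref{yjdjgth1} is its maximal dissipative lifting.

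Having made this identification, items (1), (3), (4) and (5) are immediate. By Theorem~\ref{cnzcbv}(1) the operator $\cT_z$ is closed and symmetric, and since $A_0\in\bD_{\rm sing}$, Theorem~\ref{cnzcbv}(4) gives that it is densely defined; this is (1). The congruence relation in (3) is exactly the formula in Theorem~\ref{cnzcbv}(3). Because $A_0\in\bD_{\rm sing}$ and $\dom A_0\cap\dom A_0^*=\{0\}$, the equivalences (i)$\Leftrightarrow$(ii)$\Leftrightarrow$(iii) of Theorem~\ref{cnzcbv}(6)(I) yield simultaneously $\dom\cT_z^2=\{0\}$ and $\dom\cT_{z_1}\cap\dom\cT_{z_2}=\{0\}$ whenever $\IM z_1\ne\IM z_2$; these are (4) and (5).

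It remains to prove (2), that $0\in\wh\rho(\cS_z)$. Here I would use that $\cT_z\subset\wt\cT_z$ with $\cT_z h=\wt\cT_z h$ for every $h\in\dom\cT_z$ (Remark~\ref{jrnj5a}(1)), together with Theorem~\ref{djccnfy}(1), which gives $\rho(\wt\cT_z)=\rho(A_0)\setminus\{z\}$. Since $A_0$ has a bounded inverse and $z\in\dC_+$ forces $z\ne0$, we get $0\in\rho(\wt\cT_z)$; hence there is $c>0$ with $\|\wt\cT_z h\|\ge c\|h\|$ for all $h\in\dom\wt\cT_z$, and restricting to $h\in\dom\cT_z$ gives $\|\cS_z h\|\ge c\|h\|$, i.e.\ $0$ is a point of regular type of $\cS_z$. (Equivalently, one may estimate on $\dom\cT_z$ directly: writing a generic element as $h=(f,\,-\Gamma f/\sqrt{\IM z})$ with $f\in\dom A_0$, one has $\|\cS_z h\|^2\ge\|A_0f\|^2$, while by \eqref{ghtct} $\|h\|^2=\|f\|^2+(\IM z)^{-1}\IM(A_0f,f)\le\|f\|^2+(\IM z)^{-1}\|A_0f\|\,\|f\|$, and the bound $\|A_0 f\|\ge c_0\|f\|$ coming from $0\in\rho(A_0)$ turns the right-hand side into a constant multiple of $\|A_0f\|^2$.)

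There is no genuine obstacle: the whole statement is the specialization of Theorem~\ref{cnzcbv} to the operator $A_0$ of Theorem~\ref{jrn3a}, and the one assertion not delivered word for word by Theorem~\ref{cnzcbv} — the regularity of the point $0$ in (2) — is a one-line consequence of $0\in\rho(A_0)$ through the lifting $\wt\cT_z$. The only thing worth checking with care is that $A_0$ really does have a bounded inverse, which is part of the conclusion of Theorem~\ref{jrn3a}(2).
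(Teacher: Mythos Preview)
Your proposal is correct and matches the paper's own approach: the paper explicitly states that Theorems~\ref{totjl} and~\ref{abstexam} are obtained by applying \eqref{mods} and Theorem~\ref{cnzcbv} to the maximal dissipative operators $A$ and $A_0$ of Theorem~\ref{jrn3a}. Your derivation of item~(2) via $0\in\rho(\wt\cT_z)=\rho(A_0)\setminus\{z\}$ from Theorem~\ref{djccnfy}(1) is exactly the natural way to fill in that detail.
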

Now we consider intermediate cases related to the domain of $\cT_z^2$.
\begin{theorem}\label{ctyn29}
Assume that $T\in\bD_{\rm sing}$ in $\cH$ and $\dom T\cap\dom T^*$ ($=\sS_T$ in this case) is non-trivial and non-dense in $\cH$.
Let the symmetric operator $\cT_z$ ($z\in\dC_+$) be given by \eqref{mods}.
Then
\begin{enumerate}
\item the following are equivalent:
\begin{enumerate}
\def\labelenumi{\rm (\roman{enumi})}
\item $\dom\cT_z^2$ is dense in $\sH$,
\item $\cL_T\cap(\dom T\cap\dom T^*)^\perp=\{0\};$

\end{enumerate}
\item the following are equivalent:
\begin{enumerate}
\def\labelenumi{\rm (\roman{enumi})}
\item $\dom\cT_z^2$ is non-dense in $\sH$,
\item $\cL_T\cap(\dom T\cap\dom T^*)^\perp \ne\{0\}.$
\end{enumerate}
\end{enumerate}
\end{theorem}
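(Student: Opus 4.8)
The plan is to compute the orthogonal complement of $\dom\cT_z^2$ inside $\sH=\cH\oplus\cE$ directly. (The core‑criterion of Lemma \ref{dbltnm} only tests density of $\dom\cT_z^2$ in the graph norm of $\cT_z$, which under the present hypotheses fails by Theorem \ref{cnzcbv}(6)(II); here density is meant in the Hilbert space norm.) Since statement (2) is the logical negation of statement (1), it suffices to prove (1). Throughout we use that, because $T\in\bD_{\rm sing}$, Theorem \ref{xfcnyck}(2) gives $\sS_T=\dom T\cap\dom T^*$, hence $\sL:=\cH\ominus\sS_T=(\dom T\cap\dom T^*)^\perp$; and that by Corollary \ref{octob9a} (together with \eqref{ufvvfl}, \eqref{cjhzjg}, \eqref{jrn29b}) the sum $\cL_T=\dom T^*\,\dot+\,\ran\big((T^\times-zI)^{-1}\Gamma^\times\big)$ is direct.

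First I would test orthogonality against the description \eqref{domsqu} of $\dom\cT_z^2$: a vector $(u,v)\in\cH\oplus\cE$ is orthogonal to $\dom\cT_z^2$ precisely when
\[
\big((T-\bar zI)^{-1}g,u\big)_\cH-\tfrac1{\sqrt{\IM z}}\big(\Gamma(T-\bar zI)^{-1}g,v\big)_\cE=0\qquad(g\in\sS_T).
\]
Rewriting the second term by $(\Gamma h,v)_\cE=(h,\Gamma^\times v)$ and then invoking the rigged‑space identity $\big((T-\bar zI)^{-1}g,h\big)=\big(g,(T^\times-zI)^{-1}h\big)$ (valid for $g\in\cH$, $h\in\cH_T^-$; note $(T^\times-zI)^{-1}$ restricts to $(T^*-zI)^{-1}$ on $\cH$), this condition becomes $\big(g,w(u,v)\big)_\cH=0$ for all $g\in\sS_T$, where
\[
w(u,v):=(T^*-zI)^{-1}u-\tfrac1{\sqrt{\IM z}}(T^\times-zI)^{-1}\Gamma^\times v .
\]
Thus $(u,v)\perp\dom\cT_z^2$ iff $w(u,v)\in\sS_T^\perp=\sL$. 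Since $(T^*-zI)^{-1}u\in\dom T^*\subseteq\cL_T$ (by \eqref{orn11}) and $(T^\times-zI)^{-1}\Gamma^\times v\in\ran\big((T^\times-zI)^{-1}\Gamma^\times\big)\subseteq\cL_T$, one always has $w(u,v)\in\cL_T$; therefore
\[
(u,v)\perp\dom\cT_z^2\quad\Longleftrightarrow\quad w(u,v)\in\cL_T\cap\sL .
\]

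Both directions of (1) now follow. If $\cL_T\cap\sL=\{0\}$, then every $(u,v)\perp\dom\cT_z^2$ has $w(u,v)=0$; by directness of $\cL_T=\dom T^*\dot+\ran\big((T^\times-zI)^{-1}\Gamma^\times\big)$ the two summands of $w(u,v)$ vanish separately, and since $(T^*-zI)^{-1}$, $(T^\times-zI)^{-1}$ (being the adjoint of the surjection $(T-\bar zI)^{-1}\colon\cH\to\cH_T^+$) and $\Gamma^\times$ (as $\cran\Gamma=\cE$) are all injective, $u=0$ and $v=0$. So $(\dom\cT_z^2)^\perp=\{0\}$ and $\dom\cT_z^2$ is dense in $\sH$. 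Conversely, if $0\ne w_0\in\cL_T\cap\sL$, I would use $\cL_T=\dom T^*+\ran\big((T^\times-zI)^{-1}\Gamma^\times\big)$ to write $w_0=(T^*-zI)^{-1}u_0+(T^\times-zI)^{-1}\Gamma^\times v_0'$, and set $v_0:=-\sqrt{\IM z}\,v_0'$; then $w(u_0,v_0)=w_0\in\sL$, so $(u_0,v_0)\perp\dom\cT_z^2$, and $(u_0,v_0)\ne(0,0)$ since otherwise $w_0=0$. Hence $\dom\cT_z^2$ is non‑dense. This establishes (1), and (2) is its negation.

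The step I expect to be the main obstacle is the handling of the $\Gamma$‑term through the rigging $\cH_T^+\subset\cH\subset\cH_T^-$: one must be attentive to which duality pairing is in use at each stage, and — more substantively — recognise that $(T^\times-zI)^{-1}\Gamma^\times v$ sweeps out exactly $\ran\big((T^\times-zI)^{-1}\Gamma^\times\big)$, so that $w(u,v)$ lives in $\cL_T$ and its two pieces are complementary summands in the direct decomposition of $\cL_T$ that $T\in\bD_{\rm sing}$ provides. With that in place, the orthogonality computation, the injectivity remarks, and the direct‑sum bookkeeping are all routine.
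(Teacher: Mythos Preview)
Your proof is correct and follows essentially the same approach as the paper's: both compute $(\dom\cT_z^2)^\perp$ directly from \eqref{domsqu}, pass the orthogonality condition through the rigged pairing to obtain a vector in $\cH$ that must lie in $\sS_T^\perp$, and then identify the range of that vector-valued map with $\cL_T=\dom T^*\dot+(T^\times-zI)^{-1}\ran\Gamma^\times$. The only cosmetic difference is that the paper keeps the combined expression $(T^\times-zI)^{-1}\big(h-\tfrac{1}{\sqrt{\IM z}}\Gamma^\times\varphi\big)$ and invokes $\ran\Gamma^\times\cap\cH=\{0\}$ (Proposition~\ref{zghbl}(iii)) for the injectivity step, whereas you split the two summands from the start and appeal to the directness of $\dom T^*\dot+\ran D_{Y_z}$ (Corollary~\ref{octob9a}); these are equivalent formulations of the $\bD_{\rm sing}$ hypothesis.
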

\begin{proof}

Assume $\begin{bmatrix}h\cr \f\end{bmatrix}\in\left(\dom\cT_z^2\right)^\perp$. Then from \eqref{domsqu} we get
\[
\left((T-\bar z I_\cH)^{-1}g,h\right)-\cfrac{1}{\sqrt{\IM z}}\left(\Gamma(T-\bar z I_\cH)^{-1}g,\f\right)=0\;\;\forall g\in\sS_T(=\dom T\cap\dom T^*).
\]
Hence
\[
\left(g,(T^\times-z I_\cH)^{-1}\left(h-\cfrac{\Gamma^\times\f}{\sqrt\IM z} \right)\right)=0\;\;\forall g\in\sS_T.
\]
Therefore
\begin{equation}\label{25bdec}
\begin{bmatrix}h\cr \f\end{bmatrix}\in\left(\dom\cT_z^2\right)^\perp \Longleftrightarrow (T^\times-z I_\cH)^{-1}\left(h-\cfrac{\Gamma^\times\f}{\sqrt\IM z} \right)\in
\sS_T^\perp.
\end{equation}
Because $\ran\Gamma^\times\cap\cH=\{0\}$ and we get that
\[
\left(\dom\cT_z^2\right)^\perp=\{0\}\Longleftrightarrow\left(\dom T^*\dot+(T^\times-zI_\cH)^{-1}\ran \Gamma^\times\right)\cap(\sS_T)^\perp=\{0\}.
\]

Since $(T^\times-zI_\cH)^{-1}\ran \Gamma^\times=\ran D_{Y_z}$ and (see \eqref{ctyn29a}, \eqref{orn11}, Corollary \ref{octob9a})
$$\dom T^*\dot+\ran D_{Y_z}=\cL_T,$$
 where $Y_z$ is the Cayley transform of $T,$
  we get the equivalence
 \[
 \left(\dom\cT_z^2\right)^\perp=\{0\}\Longleftrightarrow \cL_T\cap(\sS_T)^\perp=\{0\}.
 \]
 Since $\sS_T=\dom T\cap\dom T^*$ (because $T\in\bD_{\rm sing}$) we arrive at the equivalence of (i) and (ii) in (1) and (2).
\end{proof}

\section{Maximal dissipative operators of the class $\bD_{\rm sing}$ of special types}\label{nov28d}

In this Section we construct abstract examples of maximal dissipative operators of the class $\bD_{\rm sing}$ having additional properties.
Recall that for maximal dissipative operator $T$ we use the notation $\cL_T:=\ran\left(\IM \left(T^*-i I\right)^{-1}\right)^\half$ (see \eqref{ctyn29a}) and $\sS_T:=\{f\in\dom T\cap\dom T^*: Tf=T^*f\}$ (see \eqref{kerna2}); if $T\in\bD_{\rm sing}$, then Theorem \ref{xfcnyck} says that $\sS_T=\dom T\cap\dom T^*$.

\subsection{}


\textbf{(1) Maximal dissipative operator $T\in\bD_{\rm sing}$ such that $\sS_T$ is non-trivial and non-dense and
$
\cL_T\cap(\sS_T)^\perp=\{0\}.
$
}

Let $S_0$ be an unbounded closed densely defined symmetric operator in $\cH_0$, let $\sL$ be a Hilbert space and let $L\in\bB(\cH_0, \sL)$.
Assume, that (see \eqref{zwete1})
$ \ker L^*=\{0\}$ and $\ran L^*\cap \dom S^*_0=\{0\}$.
Then by Proposition \ref{ghbv1} the operator (see \eqref{erst11})
\[
S:=S_0+LS_0,\;\dom S=\dom S_0.
\]
is closed symmetric in the Hilbert space $\cH:=\cH_0\oplus\sL$ and the equality \eqref{ers12} holds, i.e.,
$\dom S^*\cap\sL=\{0\},$
where $S^*:\cH\to\cH_0$ is the adjoint to the operator $S:\cH_0\to\cH.$

Assume, in addition, that deficiency indices of $S$ are infinite. It is possible if, for instance, deficiency indices of $S_0$ are infinite or $\dim \sL=\infty$.

By Theorem \ref{ceotcn} one can construct in this case a maximal dissipative extension $T$ of $S$ such that $T\in\bD_{\rm sing}$ and $\sS_T=\dom S$.
Since $\dom T,\dom T^*\subset\dom S^*$ and $(T^\times -z I_\cH)^{-1}\ran \Gamma^\times\subset\sN_z(S)\subset\dom S^*$ (see Remark \ref{vjtyjdp}), we get
\[
\left(\dom T^*\dot+(T^\times-zI_\cH)^{-1}\ran \Gamma^\times\right)\cap \sL=\{0\}.
\]
By Definition \eqref{ctyn29a} and Proposition \ref{yjdmt} we get that $\cL_T\cap \sL=\{0\}$.

\textbf{(2) Maximal dissipative operator $T\in\bD_{\rm sing}$ such that
 $\sS_T$ is non-trivial and non-dense and
$
\cL_T\cap(\sS_T)^\perp\ne\{0\}.
$
}

 Let $S_0$ be an unbounded selfadjoint operator in $\cH_0$, let $\sL$ be an infinite-dimensional Hilbert space and let
$L\in\bB(\cH_0, \sL)$.
Then the operator
$
S:=S_0+L, \;\dom S=\dom S_0
$ 
 is closed symmetric in the Hilbert space $\cH:=\cH_0\oplus\sL$ and
$
\dom S^*=\dom S_0\oplus\sL.$ 

Using a construction in item (2) of Theorem \ref{ceotcn}, we get a maximal dissipative operator $T$ in $\cH$ which belongs to the class $\bD_{\rm sing}$ with $\sS_T=S$.

Since $\dom S\subset\dom T\subset\dom S^*$ and $\dom S^*=\dom S_0\oplus\sL$, the decomposition
$$\dom T=\dom S_0\oplus(\dom T\cap\sL)$$
holds. Besides, $\dom T\subset\cL_T$ (see \eqref{ctyn29a}). Hence $\cL_T\cap\sL\ne \{0\}.$


\subsection{}

In this subsection we give abstract constructions of some \textit{maximal sectorial and dissipative} operators $T$ of the class $\bD_{\rm sing}$ that have the properties:
\begin{enumerate}
\item 
$\sS_T$ is dense;
\item 
$\sS_T$ is non-trivial, non-dense and $\cL_T\cap(\sS_T)^\perp=\{0\}$;
\item 
$\sS_T$ is non-trivial, non-dense and $\cL_T\cap(\sS_T)^\perp\ne\{0\}$;
\item $\sS_T=\{0\}$ ($\Longleftrightarrow$ $\dom T\cap\dom T^*=\{0\}$).
\end{enumerate}

We will construct maximal sectorial and dissipative operators in $\cH$ of the form
\begin{equation}\label{dec7}
T=B^{-\half}(I+iG)B^{-\half}=\left(B^\half(I+iG)^{-1}B^\half\right)^{-1}
\end{equation}
with bounded selfadjoint operators $B$ and $G$, in an infinite-dimensional complex separable Hilbert space $\cH$ and
$$B\ge 0,\; \ker B=\{0\},\; \ran B\ne \cH,\; G\ge 0.$$

These constructions will use  von Neumann's and Schmüdgen's results mentioned in the proof of Corollary \ref{jrnj5b}.

The operator $T$ is unbounded maximal sectorial and dissipative and
\eqref{dec7} gives the following descriptions of $\dom T$ and $\dom T^*$:
\begin{equation}\label{nov22e}
\left\{\begin{array}{l}
\dom T=\left\{u\in\dom B^{-\half}:(I+iG)B^{-\half} u\in\dom B^{-\half}\right\}\\
\;=B^{\half}(I+iG)^{-1}\ran B^{\half},\; Tu=B^{-\half}(I+iG)B^{-\half}u,\; u\in\dom T,\\
\dom T^*=\left\{v\in\dom B^{-\half}:(I-iG)B^{-\half} v\in\dom B^{-\half}\right\}\\
\;=B^{\half}(I-iG)^{-1}\ran B^{\half},\; T^*v=B^{-\half}(I-iG)B^{-\half}v,\; v\in\dom T^*.
\end{array}\right.
\end{equation}
By the first representation theorem \cite{Ka} the operator $T$ is associated with the closed sectorial form
\[
\tau[u,v]:=\left((I+iG)B^{-\half}u,B^{-\half}v\right), \;\;u,v\in\dom\tau=\dom B^{-\half}=\ran B^\half.
\]
Besides, $\dom T$ and $\dom T^*$ are cores of $\dom B^{-\half}.$
Hence
\[
\IM\tau[u]=(GB^{-\half}u, B^{-\half}u)=||G^\half B^{-\half}u||^2,\; u\in\dom B^{-\half}.
\]
For constructions in items (I)--(III) we involve selfadjoint bounded and non-negative operators $B$ in $2\times 2$ block operator matrix form \eqref{ajhvvfn}.

 Let $\cK\subseteq\cH$ be a subspace of $\cH$, $\dim\cK=\infty$ and let
\begin{equation}\label{nov21d}\begin{array}{l}
B_{11}\in\bB(\cK),\;
B_{11}\ge 0,\; \ker B_{11}=\{0\},\; \ran B_{11}\ne\cK,\\
B_{22}\in\bB(\cK^\perp),\;
B_{22}\ge 0,\; \ker B_{22}=\{0\}. 
\end{array}
\end{equation}
 Let $\alpha\in (0,\half\pi)$ and let
\begin{equation}\label{nov25a}
F\in \bB(\cK), \; F=F^*,\; \ker F=\{0\},\; ||F||\le \sqrt{\tan\alpha},\;G:=F^2P_{\cK}. 
\end{equation}
Hence,
\[
T=B^{-\half}(I+iF^2P_{\cK})B^{-\half}=\left(B^\half(I+iF^2P_{\cK})^{-1}B^\half\right)^{-1}.
\]
and due to $||G||=||F^2||\le\tan\alpha$ the operator $T$ is maximal sectorial with the semi-angle $\alpha,$
in addition, $T$ is dissipative,
\begin{equation}\label{nov25d}
\gamma_T[u]=\IM (Tu,u)=\IM\tau[u]=||FP_{\cK}B^{-\half}u||^2,\; u\in\dom T,
\end{equation}
and
\[
\sS_T=\ker\gamma_T=\left\{ u\in\dom T: B^{-\half} u\in\cK^\perp\right\}.
\]
Since $\dom T$ is a core of $\dom B^{-\half}$ and $\ker F=\{0\}$, the pair
$$\{\cK, FP_{\cK}B^{-\half}\}$$
is the boundary pair for $T$, i.e., $\cK$ is the boundary space and $\Gamma=FP_{\cK}B^{-\half}:\dom T\to\cK$ is the boundary operator.

We will construct $B$ such that
\begin{equation}\label{dek10a}
\ran B^\half\cap\cK^\perp\quad\mbox{is dense in}\quad\cK^\perp.
\end{equation}

Let us show that under the condition \eqref{dek10a} the equality
\begin{equation}\label{nov24b}
\sS_T=B^\half(\ran B^\half\cap\cK^\perp).
\end{equation}
holds.

 If $h=B^\half(B^\half g)$, where $\f:=B^\half g$ and $\f\in\cK^\perp$, then $h=B^{\half}\f$ and
\[
(I\pm iF^2 P_{\cK})B^{-\half}h=(I\pm iF^2 P_{\cK})\f=\f\in\cK^\perp\cap\ran B^{\half}.
\]
Hence and from \eqref{nov22e}
\[
h\in\dom T\cap\dom T^*, \; Th=T^*h=B^{-\half}\f=g.
\]
Therefore $h\in\sS_T.$

Conversely, if $h\in\sS_T$, then $h\in\dom T\cap\dom T^*$ and $Th=T^*h$.  It follows that
\[
\begin{array}{l}
\left\{\begin{array}{l} h\in\dom B^{-\half}\\
 (I\pm iF^2P_{\cK})B^{-\half} h\in\dom B^{-\half}\\
  B^{-\half}(I+iF^2P_{\cK})B^{-\half}h=B^{-\half}(I-iF^2P_{\cK})B^{-\half}h
 \end{array}\right.\Longleftrightarrow \left\{\begin{array}{l}h\in\dom B^{-1}\\F^2P_{\cK}B^{-\half}h=0\end{array}\right.\\[3mm]
 \qquad\Longleftrightarrow \left\{\begin{array}{l}B^{-\half}h\in\cK^\perp\\
 h\in\dom B^{-1}\end{array}\right.\Longleftrightarrow \left\{\begin{array}{l}
 h=Bg\\
 g\in\ran B^\half\cap\cK^\perp\end{array}\right.
\Longleftrightarrow h\in B^\half(\ran B^\half\cap\cK^\perp) \end{array}.
\]
Thus, \eqref{nov24b} holds. Then
\begin{equation}\label{nov22a}
(\sS_T)^\perp=B^{-\half}\left\{\ran B^\half\cap\cK\right\}.
\end{equation}
Clearly $B^{-\half}\left\{\ran B^\half\cap\cK\right\}\subseteq \sS_T^\perp$.
Let $f\in \sS_T^\perp$. Then for $h=B^\half\f$, where $\f\in \ran B^\half\cap\cK^\perp$ we have
\[
0=(f, B^\half \f)=(B^\half f,\f ).
\]
Since
$\ran B^\half\cap\cK^\perp$ is dense in $\cK^\perp$ we get that $B^\half f\in\cK.$
i.e.,
$B^\half\sS_T^\perp\subseteq\cK. $  
Hence \eqref{nov22a} is valid.

\vskip 0.3 cm

 \textbf{(1) The Hermitian domain $\sS_T$ is dense}.

 Assume
$$\dim\cK^\perp=\infty,\; \ran B_{22}\ne\cK^\perp$$ and
$$Y\in\bB(\cK,\cK^\perp)\quad\mbox{is isometry such that}\quad\ran Y\cap\ran B^\half_{22}=\{0\}.$$
It is possible to choose such $Y$ due to Schmüdgen's result \cite[Theorem 5.1]{schmud} (see the proof of Corollary \ref{jrnj5b}).

 Then \eqref{nov21f}, \eqref{nov21d}, and \eqref{nov21e}
yield that $B$ is nonnegative selfadjoint operator in $\cH$ and
\begin{equation}\label{nov22bc}
\ker B=\{0\},\;\ran B\ne \cH,\;
\ran B^\half\cap\cK= \{0\}.
\end{equation}
Because $I-YY^*=P_{(\ran Y)^\perp}$, from \eqref{shormat1} and $\ran Y\cap\ran B_{22}^\half=\{0\}$, we get  that \\$\ker(B_{22}^\half P_{(\ran Y)^\perp}B^\half_{22})=\{0\}$ and therefore \eqref{rangeSh} gives that \eqref{dek10a} is valid.

From \eqref{nov22bc} we get that $\sS_T^\perp=\{0\},$ i.e., $\sS_T$ is dense in $\cH$.
Then density of $\ker\gamma_T$ (as well as \eqref{nov22a} and \eqref{nov22bc}) implies $T\in\bD_{\rm sing}.$
\vskip 0.3 cm

\textbf{(2) The Hermitian domain $\sS_T$ is non-trivial, non-dense and $\cL_T\cap\sS_T^\perp=\{0\}$}.

Let $\dim\cK^\perp=\infty$. Choose $B_{22}\in\bB(\cK^\perp)$ and $Y\in\bB(\cK,\cK^\perp)$ such that 
\begin{equation}\label{nov21e}
\ran B^\half_{22}\ne \cK^\perp,\;||Yf||<||f||\;\;\forall f\in\cK\setminus\{0\},\;\ker Y=\{0\},\;\ran Y\cap\ran B_{22}^\half=\{0\}.
\end{equation}
In addition to \eqref{nov25a} assume that
\begin{equation}\label{nov25b}
\ran F\cap\ran B^\half_{11}=\{0\}.
\end{equation}

Then \eqref{ajhvvfn}, \eqref{nov21f}, \eqref{nov21d}, and \eqref{nov21e}
yield that $B$ is a nonnegative selfadjoint operator in $\cH=\cK\oplus\cK^\perp$ and
\begin{equation}\label{nov22b}
\ker B=\{0\},\;\ran B\ne \cH,\;
\ran B\cap\cK= \{0\}.
\end{equation}
Besides from \eqref{rangeSh} and \eqref{nov21b}
\[
\ran B^\half\cap\cK=B_{11}^\half\ran D_Y, \;\ran B^\half\cap\cK^\perp=B_{22}^\half\ran D_{Y^*}.
\]
Therefore
\[
\overline{\ran B^\half\cap\cK}=\cK,\;\overline{\ran B^\half\cap\cK^\perp}=\cK^\perp.
\]
Moreover, condition \eqref{nov25b} implies
\begin{equation}\label{nov25c}
\ran F\cap\ran B^{\half}=\{0\}.
\end{equation}
Since in this case $\ran B^\half\cap\cK\ne \{0\}$, the equality \eqref{nov22a} implies $\sS_T^\perp\ne\{0\}.$ 
Due to \eqref{nov25d} and \eqref{nov25c}
we get that $\dom \Gamma^*=\{0\}.$ By Proposition \ref{zghbl} the form $\gamma_T$ is singular, i.e. $T\in\bD_{\rm sing}.$

Let us show the inclusion
\begin{equation}\label{janu15}
\cL_T=\ran\left(\IM \left(T^*-i I\right)^{-1}\right)^\half\subseteq\ran B^{\half}.
\end{equation}
Actually from \eqref{nov22e}
\[\begin{array}{l}
\IM \left(T^*-i I\right)^{-1}=\IM \left(T^{*-1}(I-i T^{*-1})^{-1}\right)\\
=\cfrac{1}{2i}(I-i T^{*-1})^{-1}\left(T^{*-1}(I+iT^{-1})-(I-i T^{*-1}) T^{-1}\right)(I+i T^{-1})^{-1}\\
=(I-i T^{*-1})^{-1}\left(B^\half((I-iG)^{-1}G(I+iG)^{-1}B^\half+B^\half(I-iG)^{-1}B(I+iG)^{-1}B^\half\right)(I+i T^{-1})^{-1}.
\end{array}
\]
Taking into account the equality $(I+i T^{-1})^{-1}= (I+iB^\half (I+iG)^{-1}B^\half)^{-1}$, for all $f\in\cH$ we have
\[
\begin{array}{l}
\left(\left(\IM \left(T^*-i I\right)^{-1}\right)f,f \right)=||F(I+iG)^{-1}B^\half(I+iB^\half (I+iG)^{-1}B^\half)^{-1} f||^2\\
+||B^\half(I+iG)^{-1}B^\half (I+iB^\half (I+iG)^{-1}B^\half)^{-1} f||^2.
\end{array}
\]
Since
\[
(I+iG)^{-1}B^\half(I+iB^\half (I+iG)^{-1}B^\half)^{-1}=(I+ i(I+iG)^{-1}B)^{-1}(I+iG)^{-1} B^\half,
\]
we get
\[
\begin{array}{l}
\left(\left(\IM \left(T^*-i I\right)^{-1}\right)f,f \right)=||F(I+ i(I+iG)^{-1}B)^{-1}(I+iG)^{-1} B^\half f||^2\\
+ ||B^\half(I+ i(I+iG)^{-1}B)^{-1}(I+iG)^{-1} B^\half f||^2\le C ||B^\half f||^2 
\end{array}
\]
with some positive number $C$. Therefore, the Douglas lemma \cite{Doug} implies the inclusion \eqref{janu15}.

If $h\in\cL_T\cap(\sS_T)^\perp$, then there is $x\in\cH$ such that $h=B^\half x\in(\sS_T)^\perp$ and from \eqref{nov22a} $B^\half x\in  B^{-\half}\{\ran B^\half\cap\cK\}$.
Hence $Bx\in\ran B^\half\cap\cK\subset\cK$. But \eqref{nov22b} and $\ker B=\{0\}$ imply $x=0$.

Thus $\cL_T\cap(\sS_T)^\perp=\{0\}$.

\vskip 0.2cm

\textbf{(3) The Hermitian domain $\sS_T$ is non-trivial, non-dense and $\cL_T\cap(\sS_T)^\perp\ne\{0\}$}.

 \textit{Suppose that $\dim \cK^\perp<\infty$ and assume that conditions \eqref{nov25a}, \eqref{nov25b} and $||Y||<1$ are valid}.

It follows that
$$\ker B=\{0\},\;\ran B\ne \cH.$$
Moreover, expressions \eqref{nov24b} and \eqref{nov22a} remains valid. Now $\sS_T$ is finite dimensional and therefore the Hermitian part $S=T\uphar\sS_T=T^*\uphar\sS_T$ is a bounded closed (densely defined) symmetric operator. As before $T\in\bD_{\rm sing}$.
Hence the domain of the adjoint operator $S^*:\cH\to\sS_T$ coincides with $\cH$, i.e.
\[
\dom S^*=\dom \cH=\sS_T\oplus(\sS_T)^\perp.
\]
Because $T\supset S$ and $T^*\supset S$ we get that
\[
\dom T,\dom T^*\subset\dom S^*,
\]
and since
$$\sS_T\subset\dom T+\dom T^*\subset \ran\left(\IM \left(T^*-i I\right)^{-1}\right)^\half\subset\dom S^*,$$
we conclude that $\cL_T\cap\sS_T^\perp\ne\{0\}.$
\vskip 0.2cm
\textbf{(4) $\sS_T=\{0\}$}.

Let $\cK=\cH$, and let
$$\begin{array}{l}
B\in\bB(\cH), \;B\ge 0, \;\ker B=\{0\},\; \ran B\ne \cH,\\
F=F^*\in\bB(\cH), \;\ker F=\{0\},\; \ran F\cap\ran B^\half=\{0\},\; G=F^2.
\end{array}$$
Then 
 $T=B^{-\half}(I+iF^2P_{\cK})B^{-\half}\in \bD_{\rm sing}$, and \eqref{nov22e}, \eqref{nov24b} yield  $\sS_T=\dom T\cap\dom T^*=\{0\}$.

\vskip 0.2 cm

Note that the similar construction of unbounded maximal sectorial operators $T$ with $\dom T\cap\dom T^*=\{0\}$ can be found in \cite[Theorem 3.1]{ArlTret}.
\vskip 0.3cm
Summing up above constructions we arrive at the following theorem.
 \begin{theorem}\label{jrnzc2c}
 There exist maximal dissipative operators $T$ such that the symmetric operators $\cT_z$, $z\in\dC_+$ of the form \eqref{mods} are densely defined and
such that 
\begin{enumerate}
\item [\rm (I)] $\dom \cT_z^2$ is dense and is a core of $\cT_z$ for at least one (then for all) $z\in\dC_+$;
\item [\rm (II)] $\dom \cT_z^2$ is dense but is note a core of $\cT_z$ for at least one (then for all) $z\in\dC_+$;
\item [\rm (III)] $\dom\cT_z^2\ne \{0\}$ and is non-dense for at least one (then for all) $z\in\dC_+$;
\item [\rm (IV)] $\dom\cT_z^2= \{0\}$ for at least one (then for all) $z\in\dC_+$.
\end{enumerate}
Moreover, for each $\alpha\in (0,\half\pi)$ it is possible to choose 
$T$ 
such that the symmetric operator $\cT_z$ is nonnegative for $z$ with $\arg z\in (0,\pi-\alpha)$.
 \end{theorem}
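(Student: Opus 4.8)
The plan is to obtain the theorem by combining the four explicit constructions of maximal sectorial dissipative operators carried out above (items (1)--(4) of the second subsection) with the structural results on $\dom\cT_z^2$ proved earlier. In each of those constructions $T$ is an operator of the form \eqref{dec7}, hence maximal sectorial and dissipative, and has already been shown to belong to $\bD_{\rm sing}$; by Theorem \ref{cnzcbv}(4) this forces the symmetric operator $\cT_z$ of \eqref{mods} to be densely defined for every $z\in\dC_+$, which is the standing requirement in all of (I)--(IV). So the proof reduces to matching each of the four cases to the construction that produces the corresponding behaviour of $\sS_T=\dom T\cap\dom T^*$.

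Concretely, for (I) I would take $T$ from construction (1), where $\sS_T$ is dense; Theorem \ref{cnzcbv}(6)(II) then says precisely that $\dom\cT_z^2$ is a core of $\cT_z$, which in particular is dense. For (IV) I would take $T$ from construction (4), where $\sS_T=\dom T\cap\dom T^*=\{0\}$, so Theorem \ref{cnzcbv}(6)(I) gives $\dom\cT_z^2=\{0\}$. For (II) I would take $T$ from construction (2), where $\sS_T$ is non-trivial and non-dense and $\cL_T\cap(\sS_T)^\perp=\{0\}$: the vanishing intersection makes $\dom\cT_z^2$ dense by Theorem \ref{ctyn29}(1), while non-density of $\sS_T$ together with Theorem \ref{cnzcbv}(6)(II) prevents $\dom\cT_z^2$ from being a core. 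For (III) I would take $T$ from construction (3), where $\sS_T$ is again non-trivial and non-dense but now $\cL_T\cap(\sS_T)^\perp\ne\{0\}$: then $\dom\cT_z^2\ne\{0\}$ since $\dom T\cap\dom T^*=\sS_T\ne\{0\}$ (Theorem \ref{cnzcbv}(6)(I)), and $\dom\cT_z^2$ is non-dense by Theorem \ref{ctyn29}(2).

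For the final clause I would note that in every one of the constructions (1)--(4) the operator $T$ can be taken $\alpha$-sectorial for any prescribed $\alpha\in(0,\half\pi)$ — this is exactly what the bound $||F||\le\sqrt{\tan\alpha}$ in \eqref{nov25a} achieves — so $T$ is in particular accretive, and Proposition \ref{ahblajhv} then yields that $\cT_z$ is nonnegative both for $\RE z\ge 0$ and for $\arg z\in(\half\pi,\pi-\alpha]$, hence for every $z\in\dC_+$ with $\arg z\in(0,\pi-\alpha)$. In short, the argument itself is pure bookkeeping; the genuine content — and the only real obstacle — has already been handled in the four constructions of this section, the delicate point there being the simultaneous realisation, via the von Neumann and Schm\"udgen theorems on operator ranges, of the range conditions on $B$, $Y$ and $F$ that at once force $T\in\bD_{\rm sing}$ and prescribe the size of $\sS_T$ and of $\cL_T\cap(\sS_T)^\perp$.
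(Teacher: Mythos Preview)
Your proposal is correct and follows essentially the same approach as the paper: the paper's proof is a one-line citation of Theorems \ref{jrn3a}, \ref{cnzcbv}, \ref{totjl}, \ref{abstexam}, \ref{ctyn29} together with Proposition \ref{ahblajhv} and ``the above constructions in this Section'', and you have simply unpacked this bookkeeping, matching each of the four sectorial constructions (1)--(4) to the appropriate case via the criteria in Theorems \ref{cnzcbv} and \ref{ctyn29}. The only cosmetic difference is that for case (IV) the paper also points to the non-sectorial examples of Theorems \ref{jrn3a}, \ref{totjl}, \ref{abstexam}, whereas you use construction (4); your choice is in fact the cleaner one for the nonnegativity clause, since that clause requires $T$ to be sectorial.
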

\begin{proof}
The assertions follow from Theorem \ref{jrn3a}, Theorem \ref{cnzcbv}, Theorem \ref{totjl}, Theorem \ref{abstexam}, Theorem \ref{ctyn29}. The case of nonnegative $\cT_z$ follows from Proposition \ref{ahblajhv} and from the above constructions in this Section of maximal dissipative and sectorial operators.

\end{proof}
\section{The case of an arbitrary closed densely defined symmetric operator with infinite deficiency indices}
As has been mentioned in Introduction, if one of deficiency indices of a closed densely defined symmetric operator $S$ is finite, then necessary $\dom S^2$
is a core of $S$.  For a closed densely defined symmetric operator with infinite deficiency indices Lemma \ref{dbltnm} gives necessary and sufficient conditions in terms of the intersection $\sM_\lambda\cap \dom S$ ($\lambda$ is a point of regular type for $S$) of the cases $\dom S^2$ is a core of $S$ ($\Longleftrightarrow \sM_\lambda\cap \dom S$ is dense in $\sM_\lambda$)  and $\dom S^2=\{0\}$ ($\Longleftrightarrow \sM_\lambda\cap \dom S=\{0\}$).
Here we apply Theorem \eqref{ctyn29} for two intermediate cases related to $\dom S^2.$
\begin{theorem}\label{jrn29a}
Let $S$ be a closed densely defined symmetric operator with infinite deficiency indices in the Hilbert space. 

(1) The following are equivalent:
\begin{enumerate}
\def\labelenumi{\rm (\roman{enumi})}
\item $\dom S^2$ is dense in $\sH$ but not a core of $S$;
\item at least for one $\lambda\in\dC\setminus\dR$ (then for all such $\lambda$) the linear manifold $\sM_{\lambda}\cap\dom S$ is non-trivial and non dense in $\sM_{\lambda}$ and  holds the equality
\[
P_{\sM_{\lambda}}(\dom S^*)\cap \left(\sM_{\lambda}\ominus \left(\sM_{\lambda}\cap\dom S\right)\right)=\{0\}.
\]
\end{enumerate}
(2) The following are equivalent:
\begin{enumerate}
\def\labelenumi{\rm (\roman{enumi})}
\item $\dom S^2$ is non-trivial and non-dense in $\sH$;
\item at least for one $\lambda\in\dC\setminus \dR$ (then for all such $\lambda$) the linear manifold $\sM_{\lambda}\cap\dom S$ is non-trivial and non-dense in $\sM_{\lambda}$ and
\[
P_{\sM_{\lambda}}(\dom S^*)\cap \left(\sM_{\lambda}\ominus \left(\sM_{\lambda}\cap\dom S\right)\right)\ne\{0\}.
\]
\end{enumerate}
\end{theorem}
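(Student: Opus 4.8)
The plan is to realise $S$ itself as the Hermitian part $\cT_z$ of a maximal dissipative operator of class $\bD_{\rm sing}$ and then invoke Theorem \ref{ctyn29} together with Lemma \ref{dbltnm}; the deficiency-index hypothesis only serves to make the stated cases non-vacuous (if one index is finite, $\dom S^2$ is always a core of $S$, and both equivalences hold trivially).

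I would first dispose of the degenerate situations. If $\dom S^2=\{0\}$ or $\dom S^2$ is a core of $S$, then by Lemma \ref{dbltnm}(1),(2) the manifold $\sM_\lambda\cap\dom S$ is trivial, respectively dense in $\sM_\lambda$, for every $\lambda\in\dC\setminus\dR$; so the first (common) condition on the right of (1) and (2) fails, while on the left a core is necessarily dense, and $\{0\}$ is trivial, so neither "dense but not a core" nor "non-trivial and non-dense" can hold. Hence the equivalences hold vacuously, and from now on I assume $\dom S^2\ne\{0\}$ and $\dom S^2$ is not a core of $S$; by Lemma \ref{dbltnm}(1),(2) this is exactly the statement that $\sM_\lambda\cap\dom S$ is non-trivial and non-dense in $\sM_\lambda$ for every (equivalently, one) $\lambda\in\dC\setminus\dR$, i.e. the first condition appearing on the right of (1) and (2). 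It remains to match its second condition with density of $\dom S^2$.

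Fix $\lambda\in\dC_-$ and set $z:=\bar\lambda\in\dC_+$. Form the Shtraus extension $\wt S_z$ of $S$ and its compression $A_z=P_{\sM_{\bar z}}\wt S_z\uphar(\sM_{\bar z}\cap\dom\wt S_z)$ to $\sM_{\bar z}=\sN_z^\perp$ as in \eqref{al}. By Theorem \ref{al2}, $A_z$ is maximal dissipative in $\sM_{\bar z}$ with boundary pair $\{\sN_z,\Gamma_z\}$, its Hermitian domain is $\sS_{A_z}=\sM_{\bar z}\cap\dom S$, and $S$ coincides with the operator $\cT_z$ of \eqref{mods} built from $A_z$ and $\Gamma_z$ in the space $\sM_{\bar z}\oplus\sN_z$ (the ambient Hilbert space). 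Since $\dom S$ is dense, Theorem \ref{al2}(5) gives $A_z\in\bD_{\rm sing}$, hence $\sS_{A_z}=\dom A_z\cap\dom A_z^*$ by Theorem \ref{xfcnyck}(2); and by Remark \ref{jrnj5a}(2), $\cL_{A_z}=P_{\sM_{\bar z}}\dom\cT_z^*=P_{\sM_{\bar z}}\dom S^*$, while the orthogonal complement of $\sS_{A_z}$ inside $\sM_{\bar z}$ is $\sM_\lambda\ominus(\sM_\lambda\cap\dom S)$. As $\sS_{A_z}$ is non-trivial and non-dense, Theorem \ref{ctyn29} applies to $T=A_z$ and yields
\[
\dom S^2=\dom\cT_z^2\ \text{is dense}\iff\cL_{A_z}\cap(\sS_{A_z})^\perp=\{0\}\iff P_{\sM_\lambda}(\dom S^*)\cap\bigl(\sM_\lambda\ominus(\sM_\lambda\cap\dom S)\bigr)=\{0\},
\]
and similarly $\dom S^2$ is non-dense iff this intersection is nonzero. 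Since a non-dense $\dom S^2$ is automatically not a core, "dense but not a core" is exactly (1)(i) and "non-trivial and non-dense" is exactly (2)(i); this gives both equivalences for the chosen $\lambda$, and the "for one (then for all) $\lambda$" clause is automatic because the left-hand statements do not involve $\lambda$. For $\lambda\in\dC_+$ I would run the same argument with $S$ replaced by $-S$ and $\lambda$ by $-\lambda\in\dC_-$, noting $\sM_\lambda=\ran(S-\lambda I)=\ran(-S-(-\lambda)I)$, $\dom(-S)^*=\dom S^*$ and $\dom(-S)^2=\dom S^2$.

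No new estimate is needed; the one point requiring care is the bookkeeping behind the identifications $\sS_{A_z}=\sM_{\bar z}\cap\dom S$, $\cL_{A_z}=P_{\sM_{\bar z}}\dom S^*$ and $\sM_{\bar z}\oplus\sN_z$ being the whole space — precisely what Theorem \ref{al2} and Remark \ref{jrnj5a} supply — together with handling both half-planes for $\lambda$.
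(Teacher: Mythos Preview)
Your proof is correct and follows essentially the same route as the paper: identify $S$ with $\cT_z$ for the compression $A_z$ of the Shtraus extension (Theorem \ref{al2}), use $A_z\in\bD_{\rm sing}$ to get $\sS_{A_z}=\sM_{\bar z}\cap\dom S$ and $\cL_{A_z}=P_{\sM_{\bar z}}\dom S^*$ (Remark \ref{jrnj5a}(2)), apply Theorem \ref{ctyn29}, and pass to $-S$ for the other half-plane. Your explicit handling of the degenerate cases via Lemma \ref{dbltnm} and of the $\lambda\leftrightarrow z=\bar\lambda$ bookkeeping is a little more detailed than the paper's version, but the argument is the same.
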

\begin{proof}

Fix $z\in\dC_+$. Due to \eqref{jrnj26b} and Theorem \ref{al2} the Shtraus extension $\wt S_z$ is the lifting of the form $\wt \cT_z$ (see \eqref{yjdjgth1}) of the maximal dissipative operator $T:=A_z$ defined in \eqref{al} in the Hilbert space $\cH:=\sM_{\bar z}$ and $\Gamma:=\Gamma_z$ (see \eqref{gamz}). Then the operator $S$ given by \eqref{jrn26} is of the form $\cT_z$ (see \eqref{mods}). Since $S$ is densely defined, due to Theorem \ref{al2} the operator $A_z$ belongs to the class $\bD_{\rm sing}$. Then we can apply Theorem \ref{ctyn29}.

Since $A_z\in\bD_{\rm sing}$, by Theorem \ref{xfcnyck} the equality $\sS_{A_z}=\dom A\cap\dom A^*_z$ holds and  \eqref{jrn27a} gives
$$\dom A\cap\dom A^*_z=\sM_{\bar z}\cap\dom S.$$
By \eqref{jrn29b}
\[
P_{\sM_{\bar z}}(\dom \wt S^*_z)=P_{\sM_{\bar z}}(\dom  S^*_z)=\cL_{A_z}.
\]
If $z\in\dC_-$ we may consider the operator $-S$ instead of $S$.

Thus, by Theorem \ref{ctyn29} assertions (i) and (ii) in (1) and (2) are equivalent.
\end{proof}

\begin{theorem}\label{yjzbr1a}
Let $S$ be a densely defined closed symmetric operator in the Hilbert space $\sH$.
Assume $n_+\ne 0$ and for $\mu, z\in\dC_+$ define a linear operator $S_{\mu,z}$ as follows
\begin{equation}\label{cl1}
\left\{\begin{array}{l}
\dom S_{\mu, z}= \left(P_{\sM_{\bar z}}+\sqrt{\cfrac{\IM z}{\IM \mu}}\,\,P_{\sN_z}\right)\dom S,\\
S_{\mu,z}\left(P_{\sM_{\bar z}}+\sqrt{\cfrac{\IM z}{\IM \mu}}\,P_{\sN_z}\right)f_{S}=\left(P_{\sM_{\bar z}}+\cfrac{\bar{\mu}}{\bar z}\sqrt{\cfrac{\IM z}{\IM \mu}}\,P_{\sN_z}\right)Sf_{S},\\
\qquad
f_{S}\in\dom {S}
\end{array}\right..
\end{equation}
Then the operator $S_{\mu, z}$ is symmetric, densely defined and closed and
\begin{enumerate}
\item at least for one pair $\{\mu, z\}$ (then for all) $\dom S_{\mu, z}^2 $ is a core of $\dom S_{\mu, z}$ if and only if 
$\dom S^2$ is a core of $\dom S$;
\item at least for one pair $\{\mu, z\}$ (then for all) $\dom S_{\mu, z}^2 $ is dense in $\sH$ but not a core of $\dom S_{\mu, z}$ if and only if 
$\dom S^2$ is  dense in $\sH$ but not a core of $\dom S$;
\item at least for one pair $\{\mu, z\}$ (then for all) $\dom S_{\mu, z}^2 $ is non-dense in $\sH$ if and only if 
$\dom S^2$ is non-dense in $\sH$;
\item  at least for one pair $\{\mu, z\}$ (then for all) $\dom S_{\mu, z}^2=\{0\} $ if and only if 
$\dom S^2=\{0\}$.
\end{enumerate}
\end{theorem}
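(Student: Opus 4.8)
The plan is to realise each clone $S_{\mu,z}$ as one of the symmetric operators $\cT_w$ of the form \eqref{mods} attached to a single maximal dissipative operator that does not depend on $\mu$, and then to invoke Theorems \ref{cnzcbv} and \ref{ctyn29}, which determine the position of $\dom\cT_w^2$ among the cases (I)--(IV) in terms of that dissipative operator alone, with no reference to the parameter $w\in\dC_+$.

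First I would fix $z\in\dC_+$ and recall from Theorem \ref{al2} that the operator $A_z$ of \eqref{al} is maximal dissipative in $\cH:=\sM_{\bar z}$, that $\{\sN_z,\Gamma_z\}$ (with $\Gamma_z$ as in \eqref{gamz}) is a boundary pair for $A_z$ by Theorem \ref{al2}(3), and that by \eqref{jrn26} the operator $S$ is exactly the operator $\cT_z$ of \eqref{mods} for $T=A_z$, acting in $\sH=\sM_{\bar z}\oplus\sN_z$. Since $\dom S$ is dense, Theorem \ref{al2}(5) gives $A_z\in\bD_{\rm sing}$, so every $\cT_w$ ($w\in\dC_+$) built from $A_z$ is closed, symmetric and densely defined by Theorem \ref{cnzcbv}(1),(4). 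Next I would check, by a direct computation, that $S_{\mu,z}$ coincides with $\cT_\mu$ built from $T=A_z$: for $f_S\in\dom S$ one has $A_z(P_{\sM_{\bar z}}f_S)=P_{\sM_{\bar z}}Sf_S$ and $\Gamma_z(P_{\sM_{\bar z}}f_S)=-\sqrt{\IM z}\,P_{\sN_z}f_S$ by \eqref{al3} and \eqref{gamz}, while $P_{\sN_z}Sf_S=\bar z\,P_{\sN_z}f_S$ since $\sN_z=\ker(S^*-zI)$; substituting into \eqref{mods} with parameter $\mu$ reproduces the domain and action in \eqref{cl1}. In particular $S_{\mu,z}$ is closed, symmetric and densely defined, which settles the first assertion.

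Then I would apply Theorems \ref{cnzcbv}(6) and \ref{ctyn29} to $\cT_w$ with $T=A_z$. Because $A_z\in\bD_{\rm sing}$, there are three mutually exclusive, exhaustive alternatives for $\sS_{A_z}$, and each fixes the case of $\dom\cT_w^2$: if $\sS_{A_z}=\{0\}$ then $\dom\cT_w^2=\{0\}$ (case (IV)); if $\sS_{A_z}$ is dense in $\sM_{\bar z}$ then $\dom\cT_w^2$ is a core of $\cT_w$ (case (I)); and if $\sS_{A_z}$ is non-trivial and non-dense then Theorem \ref{ctyn29} applies and gives case (II) (dense, not a core) precisely when $\cL_{A_z}\cap(\sS_{A_z})^\perp=\{0\}$, and case (III) (non-trivial, non-dense) otherwise. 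None of these conditions involves $w$; applying them with $w=\mu$ (for $S_{\mu,z}=\cT_\mu$) and with $w=z$ (for $S=\cT_z$) therefore places $\dom S_{\mu,z}^2$ and $\dom S^2$ in the same one of the cases (I)--(IV), which is exactly the content of the equivalences (1)--(4). If desired, one can make the criteria intrinsic to $S$ via $\sS_{A_z}=\sM_{\bar z}\cap\dom S=(S-\bar z I)\dom S^2$ (by \eqref{jrn27a}) and $\cL_{A_z}=P_{\sM_{\bar z}}(\dom S^*)$ (by \eqref{jrn29b}), but this is not needed for the argument.

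I expect the only genuine obstacle to be the identification $S_{\mu,z}=\cT_\mu$: one must handle the scalar factors $\sqrt{\IM z/\IM\mu}$ and $\bar\mu/\bar z$ in \eqref{cl1} with care, the decisive point being the eigenrelation $P_{\sN_z}Sf_S=\bar z\,P_{\sN_z}f_S$ on $\dom S$. Everything afterwards is formal, resting on the observation that the place of $\dom\cT_w^2$ in the list (I)--(IV) is governed by $A_z$ alone and is insensitive to $w\in\dC_+$. (When $n_+=0$ the statement is vacuous, since then $\sN_z=\{0\}$ and $S_{\mu,z}=S$; the hypothesis $n_+\ne0$ only guarantees that the clone is genuinely different from $S$.)
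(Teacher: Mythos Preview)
Your proposal is correct and follows essentially the same route as the paper: identify $S_{\mu,z}$ with the operator $\cT_\mu$ of \eqref{mods} built from $T=A_z$ (with boundary pair $\{\sN_z,\Gamma_z\}$), note that $A_z\in\bD_{\rm sing}$ since $S$ is densely defined, and then observe that the criteria in Theorems \ref{cnzcbv}(6) and \ref{ctyn29} placing $\dom\cT_w^2$ into one of the cases (I)--(IV) depend only on $A_z$ and not on the parameter $w$. The paper closes by invoking Theorem \ref{jrn29a} rather than Theorems \ref{cnzcbv} and \ref{ctyn29} directly, but since Theorem \ref{jrn29a} is itself proved via those results applied to $A_z$, the arguments are the same; your verification of the identification $S_{\mu,z}=\cT_\mu$ using $P_{\sN_z}Sf_S=\bar z\,P_{\sN_z}f_S$ is exactly the computation needed.
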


\begin{proof}
Consider the operator
\[
\wt \cT_{\mu,z}=\begin{bmatrix} A_z&0\cr 2i\sqrt{\IM \mu}\,\Gamma_z& \mu I_{\sM_z}\end{bmatrix}:\begin{array}{l}\sM_{\bar z}\\\oplus\\\sN_z\end{array}\to \begin{array}{l}\sM_{\bar z}\\\oplus\\\sN_z\end{array}
,\;\;\dom \wt \cT_{\mu,z}=\dom A_z\oplus\sM_z,
\]
where $A_z$ and $\Gamma_z$ are given by \eqref{al} and \eqref{gamz}, respectively. Since $A_z$ is a maximal dissipative operator in $\sM_{\bar z}$ and
$\{\sN_z,\Gamma_z\}$ is its boundary pair (see Theorem \ref{al2}), the operator $\wt\cT_{\mu,z}$ is lifting of $A_z$ of the form \eqref{yjdjgth1} and by Theorem \ref{djccnfy}
it is maximal dissipative in the space $\sH=\sM_{\bar z}\oplus\sN_z$. According Theorem \ref{cnzcbv} the Hermitian part of $\wt\cT_{\mu,z}$ is the symmetric operator of the form \eqref{mods} and in our case is given by
\[
\begin{array}{l}
\dom \cT_{\mu,z}=\left\{\begin{bmatrix}f\cr-\cfrac{\Gamma_z f}{\sqrt{\IM \mu}}\,\end{bmatrix}:\; f\in\dom A_z\right\},\\[3mm]
\cT_{\mu,z}\begin{bmatrix}f\cr-\cfrac{\Gamma_z f}{\sqrt{\IM \mu}}\end{bmatrix}=\begin{bmatrix}A_zf\cr-\cfrac{\bar \mu\, \Gamma_z f}{\sqrt{\IM \mu}}\end{bmatrix},\; f\in\dom A_z.
\end{array}
\]
Note that $\cT_{z,z}=S.$
Since $\dom S$ is dense in $\sH$, Theorem \ref{al2} gives that $A_z\in\bD_{\rm sing}$ in the space $\sM_{\bar z}$ and then $\dom \cT_{\mu,z}$ is dense in $\sH$ and $\dom \cT_{\mu,z}^*$ does not depend on $\mu$ (see Theorem \ref{cnzcbv} (4)). Hence $\dom\cT_{\mu,z}^*=\dom S^*$. Besides
$$\sM_{\bar\mu}(\cT_{\mu,z})=\ran (\cT_{\mu,z}-\bar\mu I)=\sM_z.$$
Therefore (see \eqref{jrn29b})
\[
P_{\sM_{\bar z}}\dom \cT_{\mu,z}^*=P_{\sM_{\bar z}}\dom S^*=\dom A_z+\dom A^*_z.
\]
Besides
\[
\sM_{\bar\mu}(\cT_{\mu,z})\cap\dom \cT_{\mu,z}=\sM_{\bar z}\cap\dom S=\sS_{A_z}=\dom A_z\cap\dom A^*_z.
\]
Using expression \eqref{jrnj26b} for the operator $S$ and the expression \eqref{al3} for the operator $A_z$, we conclude that $\cT_{\mu,z}$ coincides with  the operator $S_{\mu,z}$ defined in \eqref{cl1}.

Now assertions of theorem follows from Theorem \ref{jrn29a}.
\end{proof}
\begin{remark}\label{nov01}
The operators $S_{\mu,z}$ have been introduced and studied in our paper \cite{ArlCAOT2023}. We called them "clones" of $S$.

Other clones of $S$ were defined and investigated in \cite{CAOT2021} for
$ 
z\in\dC\setminus\{\dR\cup i\dR\},\; \sN_z\ne\{0\}.
$ 
They take the form
\[
\left\{\begin{array}{l}
\dom S(z)=\left(\RE zP_{\sM_{\bar z}}+i\IM zP_{\sN_{z}}\right)\dom S=\left(\RE z I_\sH-\bar z P_{\sN_{z}}\right)\dom S ,\\[3mm]
S(z)\left(\RE z -\bar z P_{\sN_z} \right)f_S=\RE z Sf_S, \; f_S\in\dom S.
\end{array}\right.
\]
It is proved in \cite[Proposition 3.1, Theorem 3.2, Proposition 3.4]{CAOT2021} that
$$\dom S(z)^*=\dom S^*,\; \ran(S-\overline{\mu(z)}I)=\sM_{\bar z}, \;\dom S(z)\cap\sM_{\bar z}= (S-\bar zI)\dom S^2, $$
where $\mu(z)=iz\RE z/\IM z$.
It follows that conclusions (1)--(4) of Theorem \ref{yjzbr1a} remain valid for the operators $S(z)$ instead of $S_{\mu,z}$.
\end{remark}

\end{document}